\DeclareSymbolFont{largesymbols}{OMX}{yhex}{m}{n}
\DeclareMathAccent{\widehat}{\mathord}{largesymbols}{"62}
\newcommand{\Q}{{\mathbb Q}}
\newcommand{\Z}{{\mathbb Z}}
\newcommand{\C}{{\mathbb C}}
\newcommand{\N}{{\mathbb N}}
\newcommand{\R}{{\mathbb R}}
\newcommand{\U}{\mathcal{U}}
\newcommand{\OO}{\mathcal{O}}
\newcommand{\Gal}{\textnormal{Gal}}
\newcommand{\Cen}{\textnormal{Cen}}
\newcommand{\GL}{\textnormal{GL}}
\newcommand{\SL}{\textnormal{SL}}
\newcommand{\ind}{\textnormal{ind}}
\newcommand{\tr}{\textnormal{tr}}
\newcommand{\suma}[1]{\widehat{#1}}
\newcommand{\inv}{{^{-1}}}
\newcommand{\GEN}[1]{\langle #1 \rangle}
\newcommand{\iso}{\cong}
\newcommand{\Aut}{\operatorname{Aut}}
\newcommand{\Irr}{{\operatorname{Irr}}}
\newcommand{\quat}[2]{\left(\frac{#1}{#2}\right)}
\newcommand{\lcm}{\textnormal{lcm}}
\newcommand{\Fix}{\textnormal{Fix}}
\theoremstyle{plain}
\newtheorem{theorem}{Theorem}[section]
\newtheorem*{theorem*}{Theorem}
\newtheorem{definition}[theorem]{Definition}
\newtheorem*{definition*}{Definition}
\newtheorem{lemma}[theorem]{Lemma}
\newtheorem{corollary}[theorem]{Corollary}
\newtheorem{proposition}[theorem]{Proposition}
\theoremstyle{definition}
\newtheorem{remark}[theorem]{Remark}
\newtheorem{notation}[theorem]{Notation}
\begin{document}

\title[A classification of exceptional components]{A classification of exceptional components in\\ group algebras over abelian number fields}

\author{Andreas B\"achle}
\address{Department of Mathematics, Vrije Universiteit Brussel,
Pleinlaan 2, 1050 Brussels, Belgium}
\email{abachle@vub.ac.be}
\author{Mauricio Caicedo}
\email{mcaicedo@vub.ac.be}
\author{Inneke Van Gelder}
\email{ivgelder@vub.ac.be}

\thanks{The research is supported by the Research Foundation Flanders (FWO - Vlaanderen), partially by the FWO project G.0157.12N}

\date{\today}

\subjclass[2010]{Primary 16S34, 20C05; Secondary 19B37, 16K20, 16G30}

\keywords{group rings, Wedderburn decomposition, exceptional components, finite groups, abelian number fields}

\begin{abstract}
  When considering the unit group of $\OO_F G$ ($\OO_F$ the ring of integers of an abelian number field $F$ and a finite group $G$) certain components in the Wedderburn decomposition of $FG$ cause problems for known generic constructions of units; these components are called exceptional. Exceptional components are divided into two types: type 1 are division rings, type 2 are $2 \times 2$-matrix rings. For exceptional components of type 1 we provide infinite classes of division rings by describing the seven cases of minimal groups (w.r.t.\ quotients) having those division rings in their Wedderburn decomposition over $F$. We also classify the exceptional components of type 2 appearing in group algebras of a finite group over number fields $F$ by describing all 58 finite groups $G$ having a faithful exceptional Wedderburn component of this type in $FG$. 
\end{abstract}

\maketitle

\section{Introduction}

Let $A$ be a semisimple algebra, finite dimensional over $\Q$. Consider the Wedderburn decomposition $A = \bigoplus_{i = 1}^k M_{n_i} (D_i)$, where the $D_i$ are division algebras. Let $\OO$ be a $\Z$-order in $A$, i.e.\ a subring of $A$, finitely generated as a $\Z$-module and $\Q\OO = A$. Furthermore, let $\OO_i$ be a $\Z$-order in $D_i$, hence $M_{n_i} (\OO_i)$ is a $\Z$-order in $M_{n_i} (D_i)$. It is well-known that the unit group $\U(\OO)$ is commensurable with $\bigoplus_{i = 1}^k \GL_{n_i} (\OO_i)$, i.e.\ they have a subgroup in common that is of finite index in both. Since $\GEN{\SL_{n_i}(\OO_i), \U(\mathcal{Z}(\OO_i))}$ has finite index in $\GL_{n_i}(\OO_i)$ and $\U(\mathcal{Z}(\OO_i))$ can be described via the Dirichlet Unit Theorem, in order to discover constructions of units which generate a subgroup of finite index of $\U(\OO)$, it is sufficient to solve the problem for $\SL_{n_i}(\OO_i)$. Let $Q$ be an ideal of a $\Z$-order $\OO$ in a rational division algebra and denote by $E_n(Q)$ the subgroup of $\SL_{n}(\OO)$ generated by all $Q$-elementary matrices, i.e.\ matrices having ones on the diagonal, one off-diagonal entry in $Q$ and zeros everywhere else. The results of \cite{BassMilnorSerre1967,Vaserstein1972,Vaserstein1973,Liehl1981,Venkataramana1994} are summarized in the following theorem. 

\begin{theorem*} \label{elementary}
Let $D$ be a finite dimensional rational division algebra and $\OO$ a $\Z$-order in $D$. Consider $M_n(D)$. Let $Q$ be any non-zero ideal of $\OO$.
If $n\geq 3$ then $[\SL_{n}(\OO):E_n(Q)]<\infty$. If $n = 2$ and $D$ is not $\Q$, a quadratic imaginary extension of $\Q$ or a totally definite quaternion algebra with center $\Q$, then $[\SL_{2}(\OO):E_2(Q)]<\infty$.
\end{theorem*}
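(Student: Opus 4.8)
Since the statement is a compilation of the results of \cite{BassMilnorSerre1967,Vaserstein1972,Vaserstein1973,Liehl1981,Venkataramana1994}, the proof I would give is an organised reduction to those theorems, split according to the value of $n$. The first move is to reduce to a convenient order: if $\OO'\supseteq\OO$ is a maximal $\Z$-order with conductor ideal $\mathfrak f$, then $\SL_n(\OO)$ has finite index in $\SL_n(\OO')$, and for any nonzero ideal $Q$ of $\OO$ one produces a nonzero two-sided ideal $\tilde Q$ of both $\OO$ and $\OO'$ with $\tilde Q\subseteq Q$ (e.g.\ a suitable integer multiple of $\OO'$). Since $E_n(\tilde Q)\subseteq E_n(Q)$, it is then enough to bound $[\SL_n(\OO'):E_n(\tilde Q)]$; so I would assume from the outset that $\OO$ is a maximal order and $Q$ an arbitrarily small nonzero ideal.

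For $n\geq 3$ the argument is essentially formal. A $\Z$-order in a finite-dimensional $\Q$-algebra has stable rank at most $2$, because $\OO/\mathfrak a$ is a finite, hence semilocal, ring for every nonzero ideal $\mathfrak a$. Therefore, for $n\geq 3$, Vaserstein's normality theorem shows that $E_n(Q)$ is normal in $\SL_n(\OO)$ and that $\SL_n(\OO,Q)/E_n(Q)$ is a quotient of the relative group $\mathrm{SK}_1(\OO,Q)$, which is finite for an order -- the commutative case being \cite{BassMilnorSerre1967} and the general case its noncommutative refinements. Combining this with the embedding $\SL_n(\OO)/\SL_n(\OO,Q)\hookrightarrow \SL_n(\OO/Q)$ into a finite group gives $[\SL_n(\OO):E_n(Q)]<\infty$.

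For $n=2$ the commutator identities that produce normality of $E_2(Q)$ for free are no longer available, and the conclusion becomes sensitive to how large the arithmetic group $\SL_2(\OO)$ is -- which is exactly why the dichotomy appears. Writing $K=\mathcal{Z}(D)$, the group $\SL_2(\OO)$ is an arithmetic lattice in $\prod_{v\mid\infty}\SL_2(D\otimes_K K_v)$, and the total real rank of this ambient group is $1$ precisely when $D$ is $\Q$, an imaginary quadratic field, or a totally definite quaternion algebra with centre $\Q$; in those three cases $\SL_2(\OO)$ is a rank-one lattice (in $\SL_2(\R)$, $\SL_2(\C)$ and $\SL_2(\H)$ respectively), the congruence subgroup property fails, and $E_2(Q)$ has infinite index for suitable nonzero $Q$, so these $D$ must be excluded. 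Outside these cases the total rank is at least $2$, the unit group of $\OO$ is infinite with the right local distribution, and $[\SL_2(\OO):E_2(Q)]<\infty$ follows from the elementary-matrix technique of Vaserstein and Liehl \cite{Vaserstein1972,Vaserstein1973,Liehl1981} together with Venkataramana's higher-rank arithmeticity argument \cite{Venkataramana1994}, the latter covering a general division algebra by showing directly that the unipotent-generated subgroup $E_2(Q)$ has finite index in $\SL_2(\OO)$.

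The main obstacle is precisely this last step: finite index of $E_2(Q)$ in $\SL_2(\OO)$ for the non-excluded $D$. Concretely one has to (i) check that the hypotheses of the Vaserstein--Liehl and Venkataramana theorems -- enough units, with the correct behaviour at every archimedean place -- survive passage to an arbitrary (not necessarily maximal) order and to an arbitrary nonzero ideal $Q$, and (ii) import those deep results themselves: the explicit unit gymnastics of Liehl and the appeal to higher-rank super-rigidity and bounded generation underlying Venkataramana's work are the real substance of the theorem, and no elementary proof is known. By contrast the $n\geq 3$ half needs only the stable-rank bound and the finiteness of the relative $\mathrm{SK}_1$.
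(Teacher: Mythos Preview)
The paper does not give a proof of this theorem at all: it is stated in the introduction as a summary of the cited results \cite{BassMilnorSerre1967,Vaserstein1972,Vaserstein1973,Liehl1981,Venkataramana1994}, with no argument beyond the attribution. Your proposal is therefore not competing with any proof in the paper; rather, you have supplied an organised road map explaining which of those references handles which case (stable rank and relative $\mathrm{SK}_1$ for $n\geq 3$; the rank dichotomy and the Vaserstein--Liehl--Venkataramana input for $n=2$), together with the standard reduction to a maximal order. That sketch is accurate and matches how the theorem is assembled from the literature, so there is nothing to correct---if anything, you have written more than the paper does.
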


The theorem is closely related to the famous Congruence Subgroup Problem (CSP) and the so-called congruence kernel. For $n = 1$ and $D$ non-commutative it is known that $\SL_{1}(\OO)$ is finite if and only if $D$ is a totally definite quaternion algebra \cite{2000Kleinert}. The answer to CSP in the case of non-commutative division algebras other than totally definite quaternion algebra is not known. The congruence kernel is known to be finite for all the cases covered by the theorem. For the excluded cases of the $\SL_2(\OO)$ the congruence kernel is infinite. For further reading we refer to \cite{Bass1964,Vaserstein1973,BakRehman1982,Sury2003,2010prasad}. This problem motivates the following definition.\pagebreak[5]

\begin{definition*}
 A simple finite dimensional rational algebra is called exceptional if it is of one of the following types:
\begin{enumerate}
 \item[type 1:] a non-commutative division algebra other than a totally definite quaternion algebra,
 \item[type 2:] a $2\times 2$-matrix ring over $\Q$, a quadratic imaginary extension of $\Q$ or over a totally definite quaternion algebra over $\Q$.
\end{enumerate}
\end{definition*}

From now on let $F$ be a number field and $\OO_F$ denote its ring of integers. Consider the group ring $\OO_F G$ of a finite group $G$. The above theorem is essential to construct with generic units a subgroup of finite index in the units $\U(\OO_F G)$. For instance, the so-called bicyclic units in $\OO_F G$ correspond to elementary matrices and hence to the $E_n(Q)$ in all Wedderburn components. This philosophy emerged in \cite{RiSe87,RS1989,RS1991,Sehgal1993,JespersLeal1993} to produce a generic subgroup of finite index in $\OO_F G$, provided $FG$ does not have exceptional Wedderburn components. Ritter and Sehgal showed that if $FG$ does not have exceptional components, then the center of $\U(\OO_FG)$ together with certain unipotent units generate a subgroup of finite index in $\U(\OO_F G)$ \cite[Theorem 1]{RiSe87}. Together with a result of Bass \cite{Bass1966}, Jespers and Leal proved that the Bass cyclic units together with the bicyclic units generate a subgroup of finite index in $\U(\Z[\xi] G)$ for groups $G$ which do not have a non-abelian epimorphic image that is fixed point free and such that $\Q(\xi)G$ does not have exceptional Wedderburn components \cite[Corollary 4.1]{JespersLeal1993}. Note that the notion of exceptional component in the above definition is a less restrictive version of the one used by Ritter, Sehgal, Jespers and Leal since the result of Venkataramana added new information in the case where $D$ is a non-commutative division algebra and $n=2$.

From the classification of Banieqbal \cite{Banieqbal} one can distill the exceptional simple algebras of type 2 which can occur in the Wedderburn decomposition of group rings of finite groups over number fields. He classified all finite subgroups of $2 \times 2$-matrices over division rings of characteristic zero. However, the description of the involved division rings is mostly implicit. Later Nebe \cite{NebeQuat} gave a list of finite subgroups of $\GL_n(D)$, spanning $M_n(D)$ over $\Q$, for $D$ a totally definite quaternion algebra with center of degree $d$ over $\Q$, such that $nd \leq 10$. In particular, for the case $n = 2$ and $d = 1$ she obtained an explicit description of the appearing division rings. Recently, the seven isomorphism types of exceptional components of type 2 which can occur in the Wedderburn decomposition of a group algebra, were provided with an elementary proof in \cite[Theorems 3.1 and 3.5]{2014EiseleKieferVanGelder}. For each of those seven $2 \times 2$-matrices over division rings $D$ there exists a left norm Euclidean (and therefore maximal) $\Z$-order $\OO$ in $D$. Thus one can use the Euclidean algorithm to find generators of $\SL_2(\OO)$. This extends the Jespers-Leal result to allow exceptional components of type 2 \cite[Method 4.2]{2014EiseleKieferVanGelder}. However, the former method relies on concrete isomorphisms between $M_2(D)$ and the simple component within the group algebra, which therefore does not allow generic constructions. Hence groups with exceptional components of type 2 still need more investigation.

For the unit groups of exceptional components of type 1 very little is known. Let $\OO$ be a $\Z$-order in a non-commutative division ring different from a totally definite quaternion algebra. Then $\SL_{1}(\OO)$ is infinite and to the best of our knowledge, there are no generic constructions of subgroups of finite index known. In 2000, Kleinert \cite{KleinertBook} gave a commendable survey on that topic. Up to that date no constructions were known for degree 3 division rings and also for degree 2 very little was known. Only recently, there was some progress made by \cite{Corrales2004,Kiefer2014} for degree 2 division rings. Braun, Coulangeon, Nebe and Sch{\"o}nnenbeck provided a generalization of Vorono\"i's algorithm to tackle the problem \cite{Nebe2014}, they give examples to work with division algebras of degree 2 and 3. Still one would like to have generic constructions such as the Pell and Gau\ss\ units introduced in \cite{Juriaans2009, Juriaans2013}, and moreover to have constructions of groups of units in $\OO_F G$ that contain a subgroup of finite index in $\U(\OO)$, when $\OO$ is a $\Z$-order in a division ring appearing in the Wedderburn decomposition of $FG$.

In this paper we first classify all exceptional components of type 2 occurring in the Wedderburn decomposition of group algebras of finite groups over arbitrary number fields. We do this by giving a full list of finite groups $G$, number fields $F$ and exceptional components $M_2(D)$ such that $M_2(D)$ is a faithful Wedderburn component of $FG$, cf.\ \cref{classification_matrices}. For producing this table, the classifications of either Banieqbal or Nebe can be used. The finite subgroups having $M_2(\Q)$ or $M_2(K)$ with $K$ a quadratic imaginary extension of $\Q$ are shown to be a subgroup of $\GL(2, 25)$ in \cite{2014EiseleKieferVanGelder}. 

Afterwards we deal with exceptional components of type 1. We classify $F$-critical groups, i.e.\ groups $G$ such that $FG$ has an exceptional component of type 1 in its Wedderburn decomposition, but no proper quotient has this property. In this way we obtain a minimal list of exceptional components of type 1 appearing in group algebras $FG$ for abelian number fields $F$ and $G$ finite. Regarding the scale of the difficulty of the problem, at least for the division rings in the list the unit groups of $\Z$-orders have to be studied. Note that any group $H$ such that $FH$ has a non-commutative division ring (not totally definite quaternion) in its Wedderburn decomposition has an epimorphic $F$-critical image $G$ such that if an exceptional component $D$ of type 1 appears as a faithful Wedderburn component of $FG$, then also $FH$ has $D$ as a simple component. Having an $F$-critical epimorphic image for a group implies that, up to now, there is no hope for a generic construction of units in $\OO_F G$. We give necessary and sufficient conditions for a finite group $G$ to be $F$-critical expressed in easy arithmetic formulas in terms of parameters of the group, and ramification indices and residue degrees of extensions of $F$, relying on the parameters of the group. For any abelian number field $F$ and any finite $F$-critical group $G$, we explicitly describe the division ring, which is an exceptional component of type 1 in $FG$, cf.\ \cref{classification_division}. In this part, we use the classification result of all finite subgroups of rational division algebras by Amitsur \cite{Amitsur}. 

All results presented in this work extend a result of \cite{2013Caicedo} where they gave an approximation to the problem of classifying for which finite groups $G$ the congruence kernel of $\U(\Z G)$ is finite, by using exceptional components. The results of \cite{Banieqbal,NebeQuat,2014EiseleKieferVanGelder} give more insight in the exceptional components of type 2, therefore it is natural to distinguish between exceptional components of type 1 and type 2, in contrast to what was done in \cite{2013Caicedo}. So our work yields (also for the case of rational group algebras) an extension of the above cited result.

\section{Preliminaries}\label{preliminaries}

We denote the set of positive integers by $\N$.
For integers $r$ and $m$, $\gcd(r,m)$ is the greatest common divisor of $r$ and $m$.
For integers $r,m$ and $p$ with $p$ prime and $\gcd(r,m)=1$ let
\begin{eqnarray*}
 v_p(m) &=& \mbox{maximum non-negative integer } k \mbox{ such that } p^k \mbox{ divides } m; \\
 o_m(r) &=& \mbox{multiplicative order of } r \mbox{ modulo } m; \\
 \zeta_m &=& \mbox{complex primitive } m\mbox{-th root of unity}.
\end{eqnarray*}

Given a field $F$ and two elements $a,b\in F$ we define 
	the quaternion algebra $\quat{a, b}{F}$ as follows:
	\begin{equation*}
		\quat{a,b}{F} = \frac{F\langle i, j\rangle}{(i^2=a,\ j^2=b,\ ij=-ji)}.
	\end{equation*}
It is a well known result that $\quat{a, b}{F}$ is a division ring if and only if the equation $ax^2+by^2=z^2$ does not have a non-zero solution $(x,y,z)$ in $F^3$  \cite[Proposition 1.6]{Pierce1982}.

A group $G$ will always be assumed to be finite.
For subgroups $G$ and $H$ of a common group, we denote $[G,H]$ to be the commutator of $G$ and $H$, i.e. the subgroup generated by all elements $g^{-1}h^{-1}gh$ with $g\in G$ and $h\in H$. By $\GEN{a}_m$ we denote the cyclic group of order $m$ generated by an element $a$. $\GEN{a}_m \rtimes_k \GEN{b}_n$ will always denote a semidirect product of $\GEN{b}_n$ acting on $\GEN{a}_m$ with kernel of order $k$, this means that if $b^{-1}ab = a^r$, then $o_m(r) = \frac{n}{k}$.
Let $F$ be a number field. If $\alpha \in FG$ and $g \in G$ then $\alpha^g = g\inv \alpha g$ and $\Cen_G(\alpha)$ denotes the centralizer of $\alpha$ in $G$. The notation $H\leq G$ (respectively, $H \unlhd G$) means that $H$ is a subgroup (resp., normal subgroup) of $G$. If $H\leq G$ then $N_G(H)$ denotes the normalizer of $H$ in $G$ and we set $\suma{H} = \frac{1}{|H|}\sum_{h\in H}h$. If $g\in G$ then $\suma{g}=\suma{\GEN{g}}$.

If $R$ is a unital associative ring and $G$ is a group then $R*^{\alpha}_{\tau} G$ denotes a crossed product with action $\alpha:G\rightarrow \Aut(R)$ and twisting (a two-cocycle) $\tau:G\times G \rightarrow \U(R)$ (see for example \cite{Passman1989,Reiner1975}), i.e. $R*^{\alpha}_{\tau} G$ is the associative ring $\bigoplus_{g\in G} R u_g$ with multiplication given by the following rules: $u_g a = \alpha_g(a) u_g$ and $u_g u_h = \tau(g,h) u_{gh}$, for $a\in R$ and $g,h\in G$. 
Let $F$ be a field and $\zeta$ a root of unity in an extension of $F$. If $\Gal(F(\zeta)/F)=\GEN{\sigma_n}$ is cyclic of order $n$, we can consider the cyclic cyclotomic algebra $F(\zeta)*^{\alpha}_{\tau} \Gal(F(\zeta)/F)$ where $\alpha$ is the natural action on $F(\zeta)$, i.e. $\alpha_{\sigma_n^m}=\sigma_n^m$. Also $\tau(g,h)$ is a root of unity for every $g$ and $h$ in $\Gal(F(\zeta)/F)$ and $\tau$ is completely determined by $u_{\sigma_n}^n=\zeta^c$. We denote this cyclic cyclotomic algebra by $(F(\zeta)/F,\sigma_n,\zeta^c)$. Furthermore, if $F$ is a field and $G$ is a group of automorphisms of $F$, we denote by $\Fix(G)$, or $\Fix(\sigma)$, if $G$ is cyclic generated by $\sigma$, the fixed subfield of $F$ under $G$. Note that if $p$ is a prime number not dividing $r$, then $\Fix(\Q(\zeta_p)\rightarrow \Q(\zeta_p):\zeta_p\mapsto \zeta_p^r)=\Q(\zeta_p+\zeta_p^r+...+\zeta_p^{r^{o_p(r)-1}})$, see {\cite[Example 14.5.2]{Dummit2004}}.

In 2004, Olivieri, del R\'io and Sim\'on \cite{Olivieri2004} showed a method to describe the primitive central idempotents of $\Q G$ for finite strongly monomial groups (including abelian-by-supersolvable groups). We recall this method.

If $K\unlhd H\leq G$ and $K\neq H$ then let $$\varepsilon(H,K)=\prod (\suma{K}-\suma{M})=\suma{K}\prod (1-\suma{M}),$$ where $M$ runs through the set of all minimal normal subgroups of $H$ containing $K$ properly. We extend this notation by setting $\varepsilon(H,H)=\suma{H}$. Let $e(G,H,K)$ be the sum of the distinct $G$-conjugates of $\varepsilon(H,K)$.

A strong Shoda pair of $G$ is a pair $(H,K)$ of subgroups of $G$ satisfying $K\unlhd H\unlhd N_G(K)$, $H/K$ is cyclic and a maximal abelian subgroup of $N_G(K)/K$ and the different $G$-conjugates of $\varepsilon(H,K)$ are orthogonal. In this case $\Cen_G(\varepsilon(H,K))=N_G(K)$. 

Let $\chi$ be an irreducible (complex) character of $G$. One says that $\chi$ is strongly monomial if there is a strong Shoda pair $(H,K)$ of $G$ and a linear character $\theta$ of $H$ with kernel $K$ such that $\chi=\theta^G$, the induced character of $G$. The group $G$ is strongly monomial if every irreducible character of $G$ is strongly monomial. For finite strongly monomial groups all primitive central idempotents of $\Q G$ are of the form $e(G,H,K)$ with $(H,K)$ a strong Shoda pair of $G$. 

%

More information was obtained on the strong Shoda pairs needed to describe the primitive central idempotents of the rational group algebra of a finite metabelian group.

\begin{theorem}[{\cite[Theorem 4.7]{Olivieri2004}}]\label{SSPmetabelian}
Let $G$ be a finite metabelian group and let $A$ be a maximal abelian subgroup of $G$ containing the commutator subgroup $G'$. The primitive central idempotents of $\Q G$ are the elements of the form $e(G,H,K)$, where $(H,K)$ is a pair of subgroups of $G$ satisfying the following conditions:
\begin{enumerate}
\item \label{metabelian1}$H$ is a maximal element in the set $\{B\leq G \mid A\leq B \mbox{ and } B'\leq K\leq B\}$;
\item \label{metabelian2}$H/K$ is cyclic.
\end{enumerate}
\end{theorem}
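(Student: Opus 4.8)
The plan is to deduce the statement from the theory of strong Shoda pairs recalled above, together with the observation that a finite metabelian group is strongly monomial: it is abelian-by-abelian, hence abelian-by-supersolvable, hence strongly monomial. Consequently every primitive central idempotent of $\Q G$ is of the form $e(G,H,K)$ for some strong Shoda pair $(H,K)$ of $G$, and conversely every strong Shoda pair of $G$ produces a primitive central idempotent of $\Q G$. So it suffices to prove two things: (a) every primitive central idempotent is already realized by a strong Shoda pair $(H,K)$ with $A\le H$; and (b) for a pair $(H,K)$ with $A\le H$, being a strong Shoda pair is equivalent to conditions \eqref{metabelian1} and \eqref{metabelian2}. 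Combining (a) and (b) then identifies the family $\{e(G,H,K)\mid (H,K)\text{ satisfies \eqref{metabelian1}--\eqref{metabelian2}}\}$ with the set of all primitive central idempotents of $\Q G$.

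For the reduction (a), note first that since $G'\le A$ the subgroup $A$ is normal in $G$ and $G/A$ is abelian. Let $e$ be a primitive central idempotent, realized by $\chi=\theta^G$ for a strong Shoda pair $(H_0,K_0)$ and a linear character $\theta$ of $H_0$ with $\ker\theta=K_0$. Restricting $\chi$ to the abelian normal subgroup $A$ and choosing a linear constituent $\lambda$, one analyses the inertia group $T=I_G(\lambda)$; here $A\le T$ and $T'\le G'\le A$, so $T/A$ is abelian, and using Clifford theory over $A$ one shows $\chi$ is induced from a linear character of a subgroup $H$ with $A\le H\le G$. Taking $K$ to be the kernel of that linear character yields a pair $(H,K)$ with $A\le H$ and $e(G,H,K)=e(G,H_0,K_0)=e$, because both sides are the primitive central idempotent attached to $\chi$. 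This $(H,K)$ is again a strong Shoda pair, so we may from now on assume $A\le H$.

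For the characterization (b), fix $(H,K)$ with $A\le H\le G$. Suppose it satisfies \eqref{metabelian1}--\eqref{metabelian2}. From $H'\le K$ we get $K\unlhd H$, and more generally, for any subgroup $B$ with $K\le B$ and $B'\le K$ one has $b^{-1}kb=k[k,b]\in K$ for all $k\in K$, $b\in B$, so $K\unlhd B$. Put $N=N_G(K)$; then $G'\le A\le H\le N$, hence $N'\le G'\le H$, so $H\unlhd N$. If $H/K\le L/K$ with $L/K$ abelian and $L\le N$, then $L'\le K$ and $A\le L$, so maximality of $H$ in \eqref{metabelian1} forces $L=H$; thus $H/K$ is a maximal abelian subgroup of $N_G(K)/K$. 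It remains to check that the distinct $G$-conjugates of $\varepsilon(H,K)$ are orthogonal: here one uses that $\varepsilon(H,K)$ is the primitive central idempotent of $\Q H$ corresponding to the faithful linear characters of the cyclic group $H/K$, and reduces the orthogonality to the statement that two distinct $G$-conjugates of such an idempotent have no common irreducible constituent, which is where the metabelian structure (normality of $A$, abelianness of $G/A$, and the resulting control over $N_G(K)$) is used. Conversely, if $(H,K)$ is a strong Shoda pair with $A\le H$, then $H'\le\ker\theta=K$, so $(H,K)$ lies in the set of \eqref{metabelian1}; and if $B\supsetneq H$ with $A\le B$ and $B'\le K\le B$, then $K\unlhd B\le N_G(K)$ and $B/K$ is abelian and strictly contains $H/K$, contradicting that $H/K$ is maximal abelian in $N_G(K)/K$; hence $H$ is maximal, and \eqref{metabelian2} is part of the definition of a strong Shoda pair.

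The main obstacle I anticipate lies in the two steps that genuinely invoke the metabelian hypothesis rather than the formal strong-Shoda-pair calculus: the Clifford-theoretic reduction to a pair with $A\le H$ in (a), and the orthogonality of the $G$-conjugates of $\varepsilon(H,K)$ under \eqref{metabelian1}--\eqref{metabelian2} in (b). Both require a careful analysis of $N_G(K)$ and of the $G$-action on the idempotents $\varepsilon(H,K)$; by contrast, the remaining verifications (normality of $K$ in $H$, normality of $H$ in $N_G(K)$, and maximal-abelianness of $H/K$) are essentially formal once one knows that $A\unlhd G$ and $G/A$ is abelian, as sketched above.
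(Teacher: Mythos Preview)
The paper does not give a proof of this statement: it is quoted verbatim as a preliminary result from \cite[Theorem 4.7]{Olivieri2004} and is used without further argument. There is therefore no in-paper proof to compare your proposal against.

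That said, your outline follows the same architecture as the original Olivieri--del R\'io--Sim\'on argument: observe that metabelian groups are strongly monomial, so every primitive central idempotent is $e(G,H,K)$ for some strong Shoda pair; then use Clifford theory over the normal abelian subgroup $A$ to pull any such pair to one with $A\le H$; and finally identify, among pairs with $A\le H$, the strong Shoda condition with the two stated conditions. The formal verifications you give (that $K\unlhd H$, $H\unlhd N_G(K)$, and $H/K$ is maximal abelian in $N_G(K)/K$) are correct. You have also correctly isolated the two places where real work is needed, namely the Clifford-theoretic reduction in (a) and the orthogonality of the $G$-conjugates of $\varepsilon(H,K)$ in (b); both are carried out in detail in \cite{Olivieri2004}, and your sketch does not yet supply those arguments. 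If you want to make the proposal self-contained you will need to fill in precisely those two steps; otherwise, citing \cite[Theorem 4.7]{Olivieri2004} as the paper does is the appropriate course.
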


\cref{SSPmetabelian} 
allows one to easily compute the primitive central idempotents 
of the rational group algebra of a finite metacyclic group. Every finite metacyclic group $G$ has a presentation of the form $$G=\GEN{a,b\mid a^m=1,\ b^n=a^t,\ a^b=a^r},$$ where $m,n,t,r$ are integers satisfying the conditions $r^n \equiv 1 \mod m$ and $m\mid t(r-1)$. Let $u=o_m(r)$, then $u\mid n$.
For every $d\mid u$, let $G_d=\langle a,b^d\rangle$. 
\begin{lemma}\label{SSPmetacyclic}
 With notations as above, the primitive central idempotents of $\Q G$ are the elements of the form $e(G,G_d,K)$ where $d$ is a divisor of $u$ and $K$ is a subgroup of $G_d$ satisfying the following conditions:
\begin{enumerate}
\item $d=\min\{x\mid u : a^{r^x-1}\in K\}$,
\item $G_d/K$ is cyclic.
\end{enumerate}
\end{lemma}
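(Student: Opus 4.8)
The plan is to derive \cref{SSPmetacyclic} directly from \cref{SSPmetabelian} by taking $A = \GEN{a}$ and analyzing which subgroups $H$ can occur. First I would observe that a finite metacyclic group $G = \GEN{a,b \mid a^m=1,\ b^n=a^t,\ a^b=a^r}$ is metabelian, with $G' = \GEN{a^{r-1}}$ contained in the cyclic normal subgroup $\GEN{a}$. I claim $A = \GEN{a}$ is a maximal abelian subgroup of $G$ containing $G'$: it clearly contains $G'$, and since $u = o_m(r) > 1$ forces $b$ (and more generally any element $a^i b^j$ with $u \nmid j$) to act nontrivially on $\GEN{a}$, no abelian subgroup properly contains $\GEN{a}$. (If $u = 1$ then $G$ is abelian and the statement degenerates; I would handle this as a trivial edge case or note the hypotheses exclude it.) So \cref{SSPmetabelian} applies with this choice of $A$.

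Next I would identify the maximal elements $H$ of the set $\{B \le G \mid \GEN{a} \le B \text{ and } B' \le K \le B\}$ for a given normal subgroup $K$. Any such $B$ contains $\GEN{a}$, hence is of the form $\GEN{a, b^d}$ for some $d \mid n$; moreover since $B/\GEN{a}$ is a subgroup of the cyclic group $G/\GEN{a} \cong \GEN{b}/(\GEN{b}\cap\GEN{a})$, and the action of $b^d$ on $\GEN{a}$ has order $o_m(r^d)$, the group $B = \GEN{a,b^d}$ has $B' = \GEN{a^{r^d-1}}$. The condition $B' \le K$ then reads $a^{r^d-1} \in K$. Thus $\GEN{a, b^d}$ lies in the set precisely when $a^{r^d-1} \in K$, and among all such $B$ the maximal one is obtained by taking $d$ as small as possible subject to $a^{r^d-1}\in K$ — but here I must be careful: "smaller $d$" gives a larger group only when $d$ ranges over divisors in a way compatible with the lattice of subgroups between $\GEN{a}$ and $G$. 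The cleanest route is to note that the set $\{x : a^{r^x-1} \in K\}$ is closed under addition and under taking $\gcd$ with $n$ (since $a^{r^n-1}=a^{r^n-1}$ and $r^n\equiv1 \bmod m$ gives $a^{r^n-1}=1\in K$), so it is exactly the set of multiples of some $d_0 \mid \gcd(n,\,\cdot)$; one checks $d_0 \mid u$ because $a^{r^u-1} = a^{0} = 1 \in K$. Then $H = G_{d_0} = \GEN{a, b^{d_0}}$ is the unique maximal element, and $d_0 = \min\{x \mid u : a^{r^x-1} \in K\}$ — this is condition (1). Condition (2), that $H/K$ be cyclic, is inherited verbatim from \cref{SSPmetabelian}(2).

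The main obstacle I anticipate is the bookkeeping in the previous paragraph: showing rigorously that the poset of candidate subgroups $B$ between $\GEN{a}$ and $G$ is a chain indexed by divisors $d$ of (a divisor of) $n$, that $B' = \GEN{a^{r^d-1}}$, and that the divisibility condition $a^{r^x-1}\in K$ singles out a unique minimal $d$ which moreover divides $u$. In particular one needs the identity $\GEN{a^{r^{x}-1}} = \GEN{a^{\gcd(r^{x}-1,\,m)}}$ and the fact that $\gcd(r^{x}-1,m)$ as a function of $x$ only depends on $\gcd(x,u)$ together with the structure of $\Z/m\Z$; this is where the condition $d \mid u$ and the replacement of "$\min$ over all $x$" by "$\min$ over $x \mid u$" must be justified. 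Everything else — that $e(G,G_d,K)$ is the resulting primitive central idempotent, that these are pairwise distinct, that one gets all of them — is a direct transcription of \cref{SSPmetabelian}. I would close by remarking that conversely every pair $(G_d, K)$ satisfying (1) and (2) does arise, since given such a pair one recovers it as the $H$ associated to $K$ by the maximality argument.
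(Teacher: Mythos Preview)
Your approach---deriving \cref{SSPmetacyclic} directly from \cref{SSPmetabelian}---is exactly what the paper intends (the paper gives no proof, merely remarking that \cref{SSPmetabelian} ``allows one to easily compute'' this). Your outline of the bookkeeping is also sound: the set $\{x : a^{r^x-1}\in K\}$ is indeed $d_0\Z_{\ge 0}$ with $d_0\mid u$, most transparently because $K\cap\GEN{a}=\GEN{a^e}$ for some $e\mid m$ and then $a^{r^x-1}\in K$ iff $r^x\equiv 1\pmod e$, so $d_0=o_e(r)\mid o_m(r)=u$.

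There is, however, one genuine slip. You take $A=\GEN{a}$ and argue it is a maximal abelian subgroup of $G$. Your own reasoning shows only that every element $a^ib^j$ with $u\nmid j$ fails to centralise $\GEN{a}$; it says nothing about elements with $u\mid j$. In fact $b^u$ does centralise $a$, so $G_u=\GEN{a,b^u}$ is abelian, and when $u<n$ (equivalently, when $b^u\notin\GEN{a}$) this properly contains $\GEN{a}$. Thus $\GEN{a}$ is maximal abelian only in the special case $u=n$; in general it is not, and \cref{SSPmetabelian} cannot be invoked with that choice of $A$.

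The fix is immediate and in fact streamlines your argument: take $A=G_u$ instead. One checks $G'\le\GEN{a}\le G_u$, that $G_u$ is abelian (since $b^u$ centralises $a$), and that $C_G(G_u)=G_u$, so $G_u$ is maximal abelian. With this choice the subgroups $B$ satisfying $A\le B\le G$ are precisely the $G_d$ for $d\mid u$, which matches the indexing in the lemma exactly and removes the need to pass from divisors of $n$ back to divisors of $u$. The rest of your analysis (computing $G_d'=\GEN{a^{r^d-1}}$, identifying the minimal $d$, and transcribing condition (2)) then goes through unchanged.
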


Olivieri, del R\'io and Sim\'on also provided information on the Wedderburn decomposition of $\Q G$. We will immediately state this in the more general context of number fields as in \cite{2014OlteanuGelder}.

Let $F$ be any number field. If $G=\GEN{g}$ is cyclic of order $k$, then the irreducible (complex) characters are all linear and are defined by the image of a generator of $G$. Therefore the set ${G^*=\Irr(G)}$ of irreducible characters of $G$ is a group and the map $\phi:\Z/k\Z\rightarrow G^*$ given by $\phi(m)(g)=\zeta_k^m$ is a group homomorphism. The generators of $G^*$ are precisely the faithful representations of $G$. Let $\mathcal{C}_F(G)$ denote the set of orbits of the faithful characters of $G$ under the action of $\Gal(F(\zeta_k)/F)$. 
Each automorphism $\sigma\in\Gal(F(\zeta_k)/F)$ is completely determined by its action on $\zeta_k$, and is given by $\sigma(\zeta_k)=\zeta_k^t$, where $t$ is an integer uniquely determined modulo $k$. In this way, one gets the following morphisms 
\begin{center}
\begin{tikzpicture}
 \matrix (m) [matrix of math nodes,row sep=3em,column sep=4em,minimum width=2em]
 {
   \Gal(F(\zeta_k)/F) & \Gal(\Q(\zeta_k)/\Q) \\
   I_k(F) & \U(\Z/k\Z) \\};
 \path[-stealth]
  (m-1-1) edge node [left] {$\iso$} (m-2-1)
      edge[draw=none] node [sloped, auto=false, allow upside down] {$\hookrightarrow$} (m-1-2)
  (m-2-1.east|-m-2-2) edge[draw=none] node [sloped, auto=false, allow upside down] {$\hookrightarrow$} (m-2-2)
  (m-1-2) edge node [right] {$\iso$} (m-2-2);
\end{tikzpicture}
\end{center}
where we denote the image of $\Gal(F(\zeta_k)/F)$ in $ \U(\Z/k\Z)$ by $I_k(F)$.
In this setting, the sets in $\mathcal{C}_F(G)$ corresponds one-to-one to the orbits under the action of $I_k(F)$ on $\U(\Z/k\Z)$ by multiplication.


Let $N\unlhd G$ be such that $G/N$ is cyclic of order $k$ and $C\in\mathcal{C}_F(G/N)$. If $\chi\in C$ and $\tr=\tr_{F(\zeta_k)/F}$ denotes the field trace of the Galois extension $F(\zeta_k)/F$, then we set 
$$\varepsilon_C(G,N)=\frac{1}{|G|}\sum_{g\in G} \tr(\chi(gN))g\inv=\frac{1}{|G|}\sum_{g\in G}\sum_{\psi\in C}\psi(gN)g\inv.$$ 

Let $K\unlhd H\leq G$ such that $H/K$ is cyclic and $C\in\mathcal{C}_F(H/K)$. Then $e_C(G,H,K)$ denotes the sum of the different $G$-conjugates of $\varepsilon_C(H,K)$.


Now let $K\unlhd H\unlhd N_G(K)$ be such that $H/K$ is cyclic of order $k$. 
Fix a generator $hK$ of $H/K$. We define $E_F(G,H/K)$ to be the set $\{g\in N_G(K)\mid g\inv hgK=h^iK \mbox{ for some } i\in I_k(F)\}$.

In the case of $F = \Q$ we have $I_{[H:K]}(\Q) = \U(\Z/[H:K]\Z)$, $|\mathcal{C}_F(H/K)| = 1$ and $E_F(G,H/K) = N_G(K)$, therefore we omit the $C$ in the notation $e_C(G,H,K)$ and denote it by $e(G,H,K)$.

We can now describe the simple components of $FG$.
\begin{theorem}[{\cite[Theorem 3.2]{2014OlteanuGelder}}]\label{main}
Let $G$ be a finite group and $F$ be a number field.
\begin{enumerate}
 \item Let $(H,K)$ be a strong Shoda pair of $G$ and $C\in\mathcal{C}_F(H/K)$. Let $[H:K]=k$ and $E=E_F(G,H/K)$. Then $e_C(G,H,K)$ is a primitive central idempotent of $FG$ and $$FGe_C(G,H,K)\iso M_{[G:E]}\left(F\left(\zeta_{k}\right)*_{\tau}^{\sigma}E/H\right),$$ where $\sigma$ and $\tau$ are defined as follows. Let $\phi:E/H\rightarrow E/K$ be a left inverse of the projection $E/K\rightarrow E/H$ and $yK$ be a generator of $H/K$. Then
\begin{eqnarray*}
\sigma_{gH}(\zeta_k) &=& \zeta_k^i, \mbox{ if } yK^{\phi(gH)}=y^iK ,\\
\tau(gH,g'H) &=& \zeta_k^j, \mbox{ if } \phi(gg'H)^{-1}\phi(gH)\phi(g'H)=y^jK,
\end{eqnarray*}
for $gH,g'H\in E/H$ and integers $i$ and $j$. The center of $FGe_C(G,H,K)$ is isomorphic to $\Fix(E/H)$, the fixed field of $F\left(\zeta_{k}\right)$ under the action of $E/H$.
\item Let $X$ be a set of strong Shoda pairs of $G$. If every primitive central idempotent of $\Q G$ is of the form $e(G,H,K)$ for $(H,K)\in X$, then every primitive central idempotent of $FG$ is of the form $e_C(G,H,K)$ for $(H,K)\in X$ and $C\in\mathcal{C}_F(H/K)$.
\end{enumerate}
\end{theorem}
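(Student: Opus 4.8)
The plan is to reduce the statement to the case $F=\Q$ and then to perform a Galois descent along $F/\Q$.

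For $F=\Q$ this is the Olivieri--del R\'io--Sim\'on description of the simple components of $\Q G$ \cite{Olivieri2004}. Given a strong Shoda pair $(H,K)$ with $k=[H:K]$ and $N=N_G(K)$, one has that $\varepsilon(H,K)$ is a primitive central idempotent of $\Q N$ (recall $\Cen_G(\varepsilon(H,K))=N$), that $\Q H\varepsilon(H,K)\iso\Q(\zeta_k)$ with a fixed generator $yK$ of the cyclic group $H/K$ going to $\zeta_k$, and --- using that $H/K$ is maximal abelian in $N/K$, so that $N/H$ acts faithfully on $H/K\iso\Z/k\Z$ and hence on $\Q(\zeta_k)$ through $\U(\Z/k\Z)\iso\Gal(\Q(\zeta_k)/\Q)$ --- that $\Q N\varepsilon(H,K)\iso\Q(\zeta_k)*_{\tau}^{\sigma}N/H$, with $\sigma,\tau$ computed from conjugation in $N$ by means of a set-theoretic section of $N/K\to N/H$, exactly the displayed formulas with $N$ in place of $E$. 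Since the distinct $G$-conjugates of $\varepsilon(H,K)$ are orthogonal (part of the strong Shoda pair hypothesis) and transitively permuted by $G$ with stabiliser $N$, induction gives $\Q Ge(G,H,K)\iso M_{[G:N]}(\Q(\zeta_k)*_{\tau}^{\sigma}N/H)$. This is part (1) for $F=\Q$, and part (2) is then trivial for $F=\Q$.

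Next I would descend. Tensoring with $F$ over $\Q$ gives $FGe(G,H,K)\iso M_{[G:N]}\big((F\otimes_\Q\Q(\zeta_k))*_{\tau}^{\sigma}N/H\big)$. As $\Q(\zeta_k)/\Q$ is Galois, $F\otimes_\Q\Q(\zeta_k)$ is a direct product of $|\mathcal{C}_F(H/K)|=[\U(\Z/k\Z):I_k(F)]$ copies of the field $F(\zeta_k)$, whose primitive central idempotents, transported to $FH\varepsilon(H,K)$, are exactly the $\varepsilon_C(H,K)$, $C\in\mathcal{C}_F(H/K)$ (the field-trace expression defining $\varepsilon_C$ is the projection onto the corresponding factor). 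The action of $N/H$ through $\sigma(N/H)\leq\U(\Z/k\Z)$ permutes these factors; by the very definition of $E_F(G,H/K)$ the stabiliser of any one factor is $E/H$ with $E=E_F(G,H/K)$ --- the same subgroup for every $C$, since $\U(\Z/k\Z)$ is abelian --- so each orbit has length $[N:E]$. A routine matrix-unit descent for a crossed product over a direct product of fields transitively permuted by the group then gives $\big((F\otimes_\Q\Q(\zeta_k))*_{\tau}^{\sigma}N/H\big)e_C\iso M_{[N:E]}(F(\zeta_k)*_{\tau}^{\sigma}E/H)$, the structure constants on the right being the restrictions of $\sigma,\tau$, i.e.\ precisely the displayed formulas (the section of $N/K\to N/H$ restricts to one of $E/K\to E/H$). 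Substituting back, $FGe_C(G,H,K)\iso M_{[G:N]}(M_{[N:E]}(F(\zeta_k)*_{\tau}^{\sigma}E/H))\iso M_{[G:E]}(F(\zeta_k)*_{\tau}^{\sigma}E/H)$, with centre $\Fix(E/H)$ because $F(\zeta_k)/\Fix(E/H)$ is Galois with group $E/H$ and the centre of such a crossed product is the fixed field. For part (2), write $1=\sum_{(H,K)\in X}e(G,H,K)$ in $\Q G\subseteq FG$ as an orthogonal sum of primitive central idempotents; by the above, each summand splits in $FG$ into the orthogonal primitive central idempotents $e_C(G,H,K)$, $C\in\mathcal{C}_F(H/K)$, which therefore exhaust those of $FG$. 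That $E_F(G,H/K)$ is a subgroup of $N_G(K)$ containing $H$ and is independent of the chosen generator $hK$ follows directly from $I_k(F)\leq\U(\Z/k\Z)$.

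The step I expect to be the main obstacle is the descent: matching the abstract crossed-product isomorphism with the explicit cocycle data --- checking that, after cutting by $e_C$ and restricting to the stabiliser $E/H$, the structure constants are given by precisely the stated formulas involving the section $\phi$, and that the indexing of the split fields through the field-trace expression for $\varepsilon_C$ agrees with the indexing of $\mathcal{C}_F(H/K)$ by the cosets of $I_k(F)$ in $\U(\Z/k\Z)$.
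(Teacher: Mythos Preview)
The paper does not prove this theorem at all; it is quoted as \cite[Theorem 3.2]{2014OlteanuGelder} and used as a black box throughout. So there is no ``paper's own proof'' to compare against here.

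That said, your plan is the natural one and is essentially how the cited source argues: reduce to the $\Q$-case of Olivieri--del R\'io--Sim\'on, tensor with $F$, split $F\otimes_\Q\Q(\zeta_k)$ into copies of $F(\zeta_k)$ indexed by cosets of $I_k(F)$ in $\U(\Z/k\Z)$, and identify the stabiliser of a factor under the $N/H$-action with $E/H$ via the very definition of $E_F(G,H/K)$. One small point to keep straight in the descent: $N/H$ embeds into $\U(\Z/k\Z)$ but need not surject, so it does not in general act transitively on the set of field factors; the crossed product $(F\otimes_\Q\Q(\zeta_k))*_\tau^\sigma N/H$ first breaks into blocks indexed by $N/H$-orbits on $\mathcal{C}_F(H/K)$, and only within the block containing $\varepsilon_C$ do you get transitivity and hence the $M_{[N:E]}$ via the matrix-unit argument. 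Different $C$'s in the same $N/H$-orbit then yield the same $e_C(G,H,K)$, consistent with the statement. With that bookkeeping in place your sketch is correct.
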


%

\begin{definition} Let $F$ be a number field and $G$ a finite group. Let $B$ be an exceptional simple algebra. If $FG$ has $B$ as a component in its Wedderburn decomposition, we say that $B$ is an \emph{exceptional component} of $FG$. We call a Wedderburn component $B$ of $FG$ \emph{faithful} if $G$ is faithfully embedded in $B$ via the Wedderburn isomorphism. 
\end{definition}

\begin{theorem}[{\cite[Theorem 3.1]{2014EiseleKieferVanGelder}}]\label{florian1}
If $G$ is a finite subgroup of $\GL_2(F)$, for $F$ a quadratic imaginary extension of the rationals, such that $G$ spans $M_2(F)$ over $F$, then $G$ is solvable, ${|G|=2^a 3^b}$ for $a,b\in$ $\N$ and $F$ is one of the following fields:
\begin{enumerate}
\item \label{main1type2} $\Q(\sqrt{-1})$, 
\item \label{main1type3} $\Q(\sqrt{-2})$ or 
\item \label{main1type4 }$\Q(\sqrt{-3})$.
\end{enumerate}

Furthermore, elements of a finite subgroup $G$ of $\GL_2(\Q)$ can only have prime power orders 1, 2, 3 and 4.
\end{theorem}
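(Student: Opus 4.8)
The statement to prove is \cref{florian1}: that a finite subgroup $G$ of $\GL_2(F)$ (for $F$ a quadratic imaginary field) spanning $M_2(F)$ over $F$ is solvable with $|G| = 2^a 3^b$ and $F \in \{\Q(\sqrt{-1}), \Q(\sqrt{-2}), \Q(\sqrt{-3})\}$, plus the element order claim for subgroups of $\GL_2(\Q)$.

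My plan is as follows. First, I would reduce the spanning hypothesis to an irreducibility-type statement: if $G$ spans $M_2(F)$, then the natural two-dimensional $F$-representation of $G$ is absolutely irreducible, hence irreducible over $F$, and the Wedderburn component of $FG$ through which $G$ acts is exactly $M_2(F)$. Thus $M_2(F)$ is a faithful Wedderburn component of $FG$ and $F$ is the center, so $F = \Q(\chi)$ for the character $\chi$ of this representation. Now invoke rational representation theory: the Schur index of an irreducible character of a finite group over $\Q$ divides the order; more importantly, since the component is a full matrix ring $M_2(F)$ (Schur index $1$) and $\dim_\Q M_2(F) = 4[F:\Q] = 8$, the character degree is $2$ and $[\Q(\chi):\Q] = 2$. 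So I need to classify finite groups with a faithful irreducible character $\chi$ of degree $2$ with $\Q(\chi)$ imaginary quadratic and Schur index $1$ over $\Q(\chi)$.

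The core classification step: a finite group $G$ with a faithful irreducible complex representation of degree $2$ has a very restricted structure — by a classical result (Blichfeldt / the classification of finite subgroups of $\GL_2(\C)$, equivalently of finite subgroups of $\PSL_2(\C)$ via projective images) $G/Z(G)$ embeds in $\PGL_2(\C)$ and is therefore cyclic, dihedral, $A_4$, $S_4$, or $A_5$; and $G$ itself is then built as a central extension, so up to the center $G$ is (generalized) metacyclic, binary dihedral/quaternion, $\SL_2(3)$-like, $\widetilde{S_4}$-like, or $\SL_2(5)$-like. Then I would eliminate $A_5$: if $G$ involves $\SL_2(5)$ then $\Q(\sqrt5) \subseteq \Q(\chi)$, which is real, contradicting that $\Q(\chi)$ is imaginary quadratic (more precisely $\sqrt 5$ appears in the character field of the $2$-dimensional rep of $\SL_2(5)$). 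That forces $G/Z(G) \in \{C_n, D_n, A_4, S_4\}$, and $|S_4| = 24$, $|A_4|=12$ have only the primes $2,3$; the cyclic and dihedral cases need a further argument — here one uses that $\Q(\chi)$ must be \emph{quadratic} imaginary, not merely imaginary: for $G$ a central extension of a cyclic or dihedral group, $\Q(\chi)$ is a subfield of a cyclotomic field $\Q(\zeta_n)$, and requiring $[\Q(\chi):\Q]=2$ together with imaginariness pins down the possible $n$ (only $n$ with $\varphi$ of the relevant subfield equal to $2$), forcing $n \mid 2^a 3^b \cdot \{\text{small}\}$ and ultimately $|G| = 2^a 3^b$. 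Finally, the imaginary quadratic subfields of cyclotomic fields that can arise are exactly $\Q(\sqrt{-1}) \subseteq \Q(\zeta_4)$, $\Q(\sqrt{-3}) \subseteq \Q(\zeta_3)$, and $\Q(\sqrt{-2}) \subseteq \Q(\zeta_8)$ (the only imaginary quadratic $\Q(\sqrt{-m})$ with $\Q(\sqrt{-m})$ inside $\Q(\zeta_n)$ for the small $n$ surviving the order restriction), giving the three fields. Solvability follows since $A_5$ is excluded. For the $\GL_2(\Q)$ addendum: take $F = \Q$, so elements of $G$ satisfy a degree-$2$ rational polynomial, their eigenvalues are roots of unity lying in a field of degree $\le 2$ over $\Q$, hence are primitive $d$-th roots of unity with $\varphi(d) \le 2$, i.e.\ $d \in \{1,2,3,4,6\}$; an element has order $\lcm$ of its two eigenvalue-orders, and checking which such $\lcm$s keep the characteristic polynomial over $\Q$ (equivalently which orders $d$ have $\varphi(d) \le 2$) leaves $1,2,3,4$ — order $6$ is excluded because a single element of order $6$ already forces... actually an element of order $6$ has minimal polynomial $x^2-x+1$ over $\Q$, which is fine, so I should double-check: the claim is \emph{only} prime power orders $1,2,3,4$, so order $6$ must be ruled out by a global argument, namely that $\langle g \rangle \cong C_6$ spanning behavior or the constraint on $F$ forces... hmm.

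Let me reconsider the $\GL_2(\Q)$ part. The main obstacle I anticipate is exactly this last point and, more broadly, keeping the cyclic/dihedral bookkeeping honest: ruling out spurious $n$ requires care. I expect the hardest part of the whole argument to be the dihedral case analysis — pinning down precisely which binary dihedral / semidihedral-type central extensions give a \emph{quadratic} imaginary character field and Schur index $1$, since this is where the fields $\Q(\sqrt{-1}), \Q(\sqrt{-2})$ (as opposed to $\Q(\sqrt{-3})$ from $A_4$-type groups) actually show up, and one must verify the $2\times2$-matrix-ring (split, index $1$) condition rather than getting a quaternion division algebra. I would handle this by writing $\Q(\chi)$ explicitly as $\Q(\zeta + \zeta^{-1}\text{-type combinations})$ or $\Q(\sqrt{\text{disc}})$ for the relevant central extension, use the local-global criterion for the quaternion algebra $\left(\frac{a,b}{\Q(\chi)}\right)$ to split, and cross-check against the known seven isomorphism types from \cite{2014EiseleKieferVanGelder} to make sure nothing is missed; for the order-$6$ issue in $\GL_2(\Q)$ I would argue that the rational canonical form of an order-$6$ element over $\Q$ is $\mathrm{diag}$ of the companion matrix of $x^2-x+1$, which is fine individually, so the statement must be that in a finite \emph{spanning} subgroup such elements cannot occur — more likely the intended claim is about orders of elements in the relevant \emph{irreducible} finite subgroups, and I would trace through \cite{2014EiseleKieferVanGelder} Theorem 3.1's proof to confirm whether order $6$ is genuinely excluded or whether "prime power orders $1,2,3,4$" is shorthand that implicitly allows the product structure; I will state it carefully and defer to the cited elementary proof for the fine details rather than re-deriving the exponent bound from scratch.
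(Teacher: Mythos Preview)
The paper does not prove this theorem: it is quoted verbatim from \cite[Theorem~3.1]{2014EiseleKieferVanGelder} and used as a black box (see also the surrounding \cref{florian2,florian3}, likewise cited without proof). There is therefore no ``paper's own proof'' to compare your attempt against; the authors simply import the result as part of the preliminaries. Any proof you supply is necessarily going beyond what the present paper does.

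That said, your sketch follows the natural line---reduce to a faithful degree-$2$ character, invoke the classification of finite subgroups of $\PGL_2(\C)$, eliminate the icosahedral case via $\sqrt{5}\in\Q(\chi)$, and then pin down the imaginary quadratic character field---and this is essentially how the cited source proceeds as well.

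One point you should resolve rather than defer: the $\GL_2(\Q)$ addendum. You correctly compute that an element of order $6$ (companion matrix of $x^2-x+1$) lives in $\GL_2(\Q)$, then hesitate. The statement is not that every element has prime-power order; it says that the prime-power orders occurring among elements are exactly $1,2,3,4$. Since the characteristic polynomial of a finite-order element in $\GL_2(\Q)$ is rational of degree $2$, its eigenvalues are either in $\{\pm 1\}$ or are a Galois-conjugate pair of primitive $d$-th roots of unity with $\varphi(d)=2$, i.e.\ $d\in\{3,4,6\}$. Hence the full list of element orders is $\{1,2,3,4,6\}$, and the prime-power orders among these are $\{1,2,3,4\}$---order $6$ is not excluded, it is simply not a prime power. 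There is no global spanning argument needed here; the claim is purely local to single elements and your eigenvalue computation already finishes it.
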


\begin{proposition}[{\cite[Proposition 3.3]{2014EiseleKieferVanGelder}}]\label{florian2}
Let $F\in\left\{\Q, \Q(\sqrt{-1}), \Q(\sqrt{-2}), \Q(\sqrt{-3})\right\}$, and let $G\leq \GL_2(F)$ be a finite group. 
Then $G$ can be embedded in the finite group $\GL(2,25)$.
\end{proposition}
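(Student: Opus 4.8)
The plan is to embed $G$ into the matrix group over a residue field of $\OO_F$ at a well-chosen prime, and the right prime to use is one lying over $5$. First I would conjugate $G$ into $\GL_2(\OO_F)$: since $G$ is finite, $L=\sum_{g\in G}g\,\OO_F^2$ is a $G$-stable $\OO_F$-lattice in $F^2$, and as each of $\Q$, $\Q(\sqrt{-1})$, $\Q(\sqrt{-2})$, $\Q(\sqrt{-3})$ has class number one, $\OO_F$ is a principal ideal domain, so $L\iso\OO_F^{\,2}$ is free of rank $2$; writing $G$ with respect to an $\OO_F$-basis of $L$ replaces $G$ by an isomorphic copy contained in $\GL_2(\OO_F)$.

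Next I would fix a prime ideal $\mathfrak p$ of $\OO_F$ with $\mathfrak p\mid 5$. The discriminants of the four fields are $1,-4,-8,-3$, none divisible by $5$, so $5$ is unramified and $e(\mathfrak p/5)=1$; since $[F:\Q]\le 2$, the residue field $\F:=\OO_F/\mathfrak p$ has order $5$ or $25$, and in either case $\F$ embeds in $\F_{25}$. (Concretely $5$ splits in $\Q$ and $\Q(\sqrt{-1})$, and is inert in $\Q(\sqrt{-2})$ and $\Q(\sqrt{-3})$.) Reduction modulo $\mathfrak p$ gives a group homomorphism $\pi\colon\GL_2(\OO_F)\to\GL_2(\F)$, and $\GL_2(\F)\le\GL(2,25)$, so it suffices to show that $\pi$ is injective on $G$, i.e.\ $G\cap\ker\pi=1$.

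This is the step where the choice of $5$ is used. Namely $\ker\pi$, being contained in the principal congruence subgroup $1+\mathfrak p\,M_2(\OO_{F,\mathfrak p})$, is torsion-free. This is the standard congruence-subgroup lemma: writing a nontrivial element of finite order as $1+x$ with $x\in\mathfrak p^{\,j}M_2\setminus\mathfrak p^{\,j+1}M_2$ and expanding $(1+x)^\ell=1$ binomially, for a prime $\ell\ne 5$ the term $\ell x$ has strictly smallest $\mathfrak p$-adic valuation (as $\ell$ is a unit), and for $\ell=5$ the term $5x$ still strictly dominates since $e(\mathfrak p/5)=1<5-1$; in both cases $(1+x)^\ell\ne 1$, a contradiction. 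As $G$ is finite, $G\cap\ker\pi=1$, whence $\pi$ embeds $G$ into $\GL_2(\F)\le\GL(2,25)$. (Alternatively one notes first that $\GL_2(F)$ has no element of order $5$, since $\Phi_5$ stays irreducible over each of the four fields, none containing $\sqrt5$, so $5\nmid|G|$ and only the elementary $\ell\ne 5$ case of the lemma is needed; \cref{florian1} gives even more, $|G|=2^a3^b$, when $G$ additionally spans $M_2(F)$.)

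The only part carrying any content is the torsion-freeness of the congruence subgroup, and that reduces to a routine valuation computation once one records that $5$ is unramified in all four fields; everything else (class number one, the three discriminants, the residue-field sizes) is a short finite check. The point that really has to be gotten right is the choice of prime: $5$ is the smallest rational prime that is simultaneously unramified in all four fields and large enough that $5-1>e(\mathfrak p/5)$, while still small enough that its residue fields embed in $\F_{25}$.
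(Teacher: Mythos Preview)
The paper does not give its own proof of this proposition; it is quoted verbatim from \cite[Proposition 3.3]{2014EiseleKieferVanGelder} without argument, so there is nothing to compare against here.

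That said, your proof is correct and is essentially the standard argument (and almost certainly the one in the cited source): pass to a $G$-stable $\OO_F$-lattice, use class number one to get $G\le\GL_2(\OO_F)$, reduce modulo a prime above $5$, and invoke the Minkowski-type lemma that the principal congruence subgroup at an unramified prime $p>2$ is torsion-free. Your checks are all fine: the four fields have class number one, $5$ is unramified in each (discriminants $1,-4,-8,-3$), the residue fields are $\F_5$ or $\F_{25}$, and the binomial-valuation argument goes through since $e(\mathfrak p/5)=1<4$. The alternative remark that $\GL_2(F)$ has no element of order $5$ (because $\Phi_5$ stays irreducible over each $F$, none of which contains $\Q(\sqrt{5})$) is a nice shortcut that lets you avoid the $\ell=5$ case of the congruence lemma entirely. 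One small wording slip: you write ``$5$ splits in $\Q$'', which is meaningless---just say the residue field is $\F_5$ there.
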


\begin{theorem}[{\cite[Theorem 3.5]{2014EiseleKieferVanGelder}}]\label{florian3}
Let $G$ be a finite group. If $G$ is a subgroup of $\GL_2(D)$ for $D$ a totally definite quaternion algebra with center $\Q$, such that $G$ spans $M_2(D)$ over $\Q$, then $D$ is one of the following algebras:
\begin{enumerate}
\item \label{main2type1}$\quat{-1, -1}{\Q}$,
\item \label{main2type2}$\quat{-1, -3}{\Q}$ or 
\item\label{main2type3}$\quat{-2, -5}{\Q}$.
\end{enumerate}
\end{theorem}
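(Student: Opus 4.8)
The plan is to translate the hypothesis into a statement about the Hasse invariants of $D$, bound $|G|$, and thereby reduce to finitely many quaternion algebras. First I would note that the surjection $\Q G \twoheadrightarrow M_2(D)$ together with the faithful embedding $G \hookrightarrow M_2(D)^\times$ exhibits $M_2(D)$ as a faithful Wedderburn component of $\Q G$; write $M_2(D) \iso \Q G e$ for the associated primitive central idempotent $e$ and let $\chi$ be the attached irreducible complex character. Because $Z(M_2(D)) = Z(D) = \Q$, the field of character values of $\chi$ is $\Q$; because $D$ is a division algebra and $M_2(D)$ is central simple over $\Q$ of degree $4$, the Schur index is $m_\Q(\chi) = 2$; and because $D$ is totally definite, $D \otimes_\Q \R \iso \H$, hence $\R \otimes_\Q \Q G e \iso M_2(\H)$ and the Frobenius--Schur indicator of $\chi$ is $-1$. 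So $D$ is simply the division algebra underlying $\Q G e$, i.e.\ a quaternion algebra over $\Q$ with $\operatorname{inv}_\infty(D) = \tfrac12$, and the whole question becomes: which ramification sets $\Ram(D)$ can occur?

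Next I would bound the order of $G$. For $g \in G$ of order $d$, the image $ge \in \Q G e$ again has order $d$ by faithfulness, so its reduced characteristic polynomial over $\Q$, which has degree $4$, is divisible by the cyclotomic polynomial $\Phi_d$; hence $\varphi(d) \le 4$, forcing $d \in \{1,2,3,4,5,6,8,10,12\}$. In particular $G$ has no element of prime order $\ge 7$, so by Cauchy's theorem $|G| = 2^a 3^b 5^c$. A little more work — analysing the faithful $D$-module $D^2$ and using that $D^\times$ has no element of order $5$ (since $\Q(\zeta_5) \not\hookrightarrow D$, its degree being $4>2$), in which case $C_5 \times C_5$ could only act on $D^2$ through scalars in a cyclotomic field — shows the Sylow $5$-subgroup of $G$ is cyclic of order $5$, with analogous constraints on the other Sylow subgroups; these refinements will be used only in the last step.

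Then I would invoke the classical fact that if a prime $p$ does not divide $|G|$, then every simple component of $\Q_p G$ has Schur index $1$: the crossed-product description of \cref{main} involves a root-of-unity cocycle, which splits over the unramified extension $\Q_p(\zeta_{|G|})/\Q_p$. Therefore $\operatorname{inv}_p(D) = 0$ for every prime $p \notin \{2,3,5\}$, and combined with $\operatorname{inv}_\infty(D) = \tfrac12$ and Hilbert reciprocity $\sum_v \operatorname{inv}_v(D) = 0$ this leaves exactly four possibilities: $\Ram(D)$ equals $\{2,\infty\}$, $\{3,\infty\}$, $\{5,\infty\}$ or $\{2,3,5,\infty\}$. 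A short Hilbert-symbol computation identifies the first three ramification sets with $\quat{-1,-1}{\Q}$, $\quat{-1,-3}{\Q}$ and $\quat{-2,-5}{\Q}$, respectively.

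The final step — excluding $\Ram(D) = \{2,3,5,\infty\}$, the algebra $D_0$ of discriminant $30$ — is the one I expect to be genuinely hard, because a finite group can perfectly well embed in $\GL_2(D_0)$; one must use that $G$ \emph{spans} all of $M_2(D_0)$, not merely sits inside it. The plan here is to combine two ingredients. First, the double centralizer theorem applied to cyclic subgroups of $G$: for instance $5$ splits in $\Q(\sqrt{-1})$, so $\Q(\zeta_4)$ does not split $D_0$, whence $\Cen_{M_2(D_0)}(\Q(\zeta_4))$ is a quaternion division algebra over $\Q(\sqrt{-1})$ (ramified exactly at the two primes above $5$) and any order-$4$ element of $G$ has its centralizer inside the unit group of that algebra — and similarly for elements of orders $3,5,6,8,10,12$, pinning down where their centralizers can live. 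Second, Amitsur's classification of the finite subgroups of quaternion division algebras over $\Q$ (only cyclic groups of order $1,2,3,4,6$, the quaternion group $Q_8$, the dicyclic group of order $12$, and $\SL(2,3)$ occur), together with the record of which of these sits in each admissible $D$. Feeding the order and Sylow restrictions of the previous steps into these centralizer constraints, one should be forced to exhibit in $G$ a subgroup whose rational group algebra cannot fit inside $M_2(D_0)$ — equivalently, to see that the Brauer class of $\Q G e$ cannot ramify at all of $2$, $3$ and $5$ simultaneously — a contradiction. Should this direct elimination turn out to be too intricate, one can instead appeal to Nebe's explicit list of the finite groups spanning $M_2(D)$ for $D$ a totally definite quaternion algebra with center $\Q$, which contains only the three algebras in the statement. (That each of the three is actually realised — the converse, not needed here — is a straightforward check.)
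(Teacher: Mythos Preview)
The paper does not prove this statement: it is quoted from \cite{2014EiseleKieferVanGelder} with the remark that it also follows from Nebe's classification \cite{NebeQuat}, so there is nothing to compare against beyond the very fallback you name. On your own argument, the reduction in steps 1--5 to the four candidate ramification sets is sound modulo two repairs. The assertion that $\Phi_d$ divides the reduced characteristic polynomial of an order-$d$ element is false (take $g=\operatorname{diag}(-1,\omega)$ in $M_2(D)$ with $\omega\in D$ of order $3$: then $g$ has order $6$ and reduced characteristic polynomial $(x+1)^2\Phi_3(x)$); what you actually need is that this degree-$4$ rational polynomial is a product of cyclotomic polynomials whose indices have least common multiple $d$, from which the list $d\in\{1,2,3,4,5,6,8,10,12\}$ still follows by a short case check. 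And your justification of step 3 via \cref{main} applies only to strongly monomial groups, which $G$ need not be (e.g.\ $\SL(2,5)$ occurs in \cref{exceptional_table}); use Brauer--Witt instead, together with the fact that a cyclotomic algebra over $\Q_p$ with cocycle values of order prime to $p$ is split because units are norms from the unramified splitting field.

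The genuine gap is step 6. Your centralizer computation is correct --- for the discriminant-$30$ algebra $D_0$ one has $D_0\otimes_\Q\Q(i)$ still a division algebra, ramified at the two primes above $5$ --- but you give no mechanism for turning such observations into a contradiction, and ``feeding the order and Sylow restrictions into these centralizer constraints'' is not an argument: it is a description of the kind of case analysis that constitutes the hard content of the Banieqbal and Nebe classifications. As written, the proof is therefore incomplete, and the only route you actually close is the appeal to \cite{NebeQuat}, which is exactly what the present paper does.
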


The previous theorem can also be derived from \cite[Theorems 6.1 and 12.1]{NebeQuat}.

\begin{lemma}\label{subgroup}
 Let $G$ be a finite group, $F$ be a number field and $\rho$ an irreducible $F$-representation of $G$. Let $e$ be the primitive central idempotent associated to $\rho$ and $K$ be the kernel of $\rho$. Then the group $Ge$ is faithfully embedded in $(FG)e$ and its $F$-span equals $(FG)e$. In particular, $Ge$ is isomorphic to a subgroup of $\U((FG)e)$.
\end{lemma}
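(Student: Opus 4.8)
The plan is to unwind the definitions and invoke standard facts about representations attached to primitive central idempotents. First I would recall that since $\rho$ is an irreducible $F$-representation of $G$ with kernel $K$, the simple component $(FG)e$ is precisely the image of the algebra map $FG \to \End_F(V)$ afforded by $\rho$ (extended $F$-linearly), where $V$ is the underlying module; by Wedderburn theory this image is a simple algebra $M_n(D)$, and the $F$-algebra map $FG \to (FG)e$, $\alpha \mapsto \alpha e$, is surjective. In particular the $F$-span of the image $Ge = \{ge : g \in G\}$ is all of $(FG)e$, since the group elements $F$-span $FG$ and the projection $\alpha \mapsto \alpha e$ is $F$-linear and onto. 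This gives the ``$F$-span equals $(FG)e$'' part immediately.

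For the faithfulness of the embedding $g \mapsto ge$, I would argue as follows. The map $G \to \U((FG)e)$, $g \mapsto ge$, is a group homomorphism (it is the restriction of the ring homomorphism $FG \to (FG)e$, and each $ge$ is invertible with inverse $g^{-1}e$ since $g$ has finite order). Its kernel is $\{g \in G : ge = e\}$. I would then identify this kernel with $K = \ker \rho$: indeed $ge = e$ in $(FG)e$ means that $g$ acts as the identity on the simple module associated to $e$, which is exactly the condition $g \in \ker\rho$, because $\rho$ is, up to multiplicity, the representation afforded by that simple module. Hence the kernel of $g \mapsto ge$ is trivial precisely when $K$ is trivial; but more to the point, the statement as phrased only claims $Ge$ is \emph{faithfully embedded} in $(FG)e$, meaning the homomorphism $G \to \U((FG)e)$ — wait, one must be careful: if $K \neq 1$ then $g \mapsto ge$ is not injective on $G$. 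Re-reading the statement, I believe the intended reading is that $Ge$ (the image, a quotient of $G$ by $K$) sits inside $\U((FG)e)$ as an abstract group, i.e.\ the natural map $G/K \to \U((FG)e)$ is injective; equivalently $Ge \cong G/K$ and this copy of $G/K$ generates $(FG)e$ over $F$. So I would prove: $ge = he \iff gK = hK \iff h^{-1}g \in K$, which reduces to showing $ge = e \iff g \in K$, handled above via the module interpretation.

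Concretely, to nail $ge = e \iff g \in K$: the idempotent $e$ is central in $FG$, so $(FG)e \cong FG/(1-e)FG$ as $F$-algebras, and composing $G \to (FG)e$ with this isomorphism sends $g$ to its class. If $g \in K$ then $\rho(g) = \id$, so $g$ acts trivially on $V$, so $(g-1)$ annihilates $V$; since $V$ is faithful over $(FG)e$ (it is the unique simple $(FG)e$-module up to iso), $(g-1)e = 0$, i.e.\ $ge = e$. Conversely if $ge = e$ then $g$ acts trivially on $V$, so $g \in \ker\rho = K$. The final sentence (``$Ge$ is isomorphic to a subgroup of $\U((FG)e)$'') then follows since each $ge$ is a unit and $Ge$ is closed under multiplication and inverses.

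The main obstacle is essentially bookkeeping about which representation a primitive central idempotent ``belongs to'' and matching $\ker\rho$ with $\{g : ge = e\}$; there is no deep difficulty, only the need to state cleanly the correspondence between $\rho$, the simple module $V$, the component $(FG)e$, and the central idempotent $e$, and to be precise that ``faithfully embedded'' refers to the injectivity of $G/\ker\rho \hookrightarrow \U((FG)e)$ rather than of $G$ itself. Everything else is routine linear algebra over $FG$.
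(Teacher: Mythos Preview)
Your argument is correct and follows essentially the same route as the paper's proof: identify the map $g\mapsto ge$ with $\rho$, so that its kernel is $K$, whence $Ge\cong G/K$ sits inside $\U((FG)e)$; and observe that the $F$-linear surjection $FG\to (FG)e$ sends the spanning set $G$ onto $Ge$. The paper's proof is more terse and simply asserts $G/K\simeq\rho(G)=Ge$, whereas you spell out the equivalence $ge=e\iff g\in K$ via faithfulness of the simple module over the simple algebra $(FG)e$; this extra detail is correct and helpful, and your reading of ``faithfully embedded'' as injectivity of $G/K\hookrightarrow\U((FG)e)$ matches the paper's intent.
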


\begin{proof}
 Consider the irreducible representation $\rho \colon G \to \U(FGe)$. This induces a faithful representation $\overline{\rho} \colon G/K \to \U(FGe)$ and $G/K \simeq \rho(G) = Ge$. Since $FGe$ is the $F$-span of $\rho(G) = \overline{\rho}(G/K)$ and $FGe$ is simple, $FGe$ is also isomorphic to a Wedderburn component of $F (G/K)$.
\end{proof}


In this paper we are interested to know all exceptional components of a group algebra $FG$ of a finite group over a number field. Because of the previous lemma, it is sufficient to study finite subgroups of $\U(B)$, which span $B$ over $F$, for exceptional simple algebras $B$. By \cref{florian1,florian3}, the exceptional components of type 2 restrict to the following seven isomorphism types: $M_2(\Q), M_2(\Q(\sqrt{-1})), M_2(\Q(\sqrt{-2})), M_2(\Q(\sqrt{-3})), M_2\quat{-1, -1}{\Q}, M_2\quat{-1, -3}{\Q}, M_2\quat{-2, -5}{\Q}$. If $B$ is a $2\times 2$-matrix ring over a field, then $G$ is a subgroup of $\GL(2,25)$, by \cref{florian2}. When $B$ is a $2 \times 2$-matrix ring over a totally definite quaternion algebra, then $G$ appears in the classifications of Banieqbal and Nebe \cite{Banieqbal,NebeQuat}. 

When $B$ is a division ring, one can use Amitsur's classification \cite{Amitsur}. This classification splits into 2 parts: a list of some Z-groups and a list of non-Z-groups. Remind that Z-groups are groups with all Sylow $p$-subgroups cyclic. We use notations from \cite[Theorems 2.1.4, 2.1.5]{1986Shirvani} and \cite[Theorem 2.2]{2013Caicedo}.

\begin{theorem}[Amitsur]\label{SubgruposAD}
\leavevmode
\begin{enumerate}
 \item[(Z)] The Z-groups which are finite subgroups of division rings are:
	\begin{enumerate}
	 \item the finite cyclic groups,
	 \item $C_m \rtimes_2 C_4$ with $m$ odd and $C_4$ acting by inversion on $C_m$ and
  \item $C_m\rtimes_k C_n$ with $\gcd(m,n)=1$ and, using the following notation
  \begin{eqnarray*}
  P_p&=& \text{ Sylow } p\text{-subgroup of } C_m,\\
  Q_p&=& \text{ Sylow } p\text{-subgroup of } C_n,\\
  X_p&=&\{q\mid n : q \text{ prime and } [P_p,Q_q]\ne 1\},\\
  R_p&=&\prod_{q\in X_p} Q_q;
  \end{eqnarray*}
  we assume $C_n=\prod_{p\mid m} R_p$ and the following properties hold for every prime $p\mid m$ and $q\in X_p$:
    \begin{enumerate}
    \item $q\cdot o_{q^{v_q(k)}}(p) \nmid o_{\frac{mn}{\vert P_p\vert \; \vert R_p\vert }}(p)$,
    \item if $q$ is odd or $p\equiv 1 \mod 4$ then $v_{q}(p-1) \le v_q(k)$ and
    \item if $q=2$ and $p\equiv -1 \mod 4$ then $v_2(k)$ is either $1$ or greater than $v_2(p+1)$.
	  \end{enumerate}
  \end{enumerate}
 \item[(NZ)] The finite subgroups of division rings which are not Z-groups are:
	\begin{enumerate}
	\item $\mathcal{O}^{*}=\GEN{s,t|(st)^2=s^3=t^4}$ (binary octahedral group),
	\item $\SL(2,5)$,
	\item $\SL(2,3)\times M$, with $M$ a group in (Z) of order coprime to 6 and $o_{|M|}(2)$ odd,
	\item $Q_{4k}$ with $k$ even,
	\item $Q_8\times M$ with $M$ a group in (Z) of odd order such that $o_{|M|}(2)$ is odd.
	\end{enumerate}
\end{enumerate}
\end{theorem}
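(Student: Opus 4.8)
The plan is to translate the statement into a question about rational group algebras and then into a local computation of Schur indices. First I would record the standard reduction: if a finite group $G$ lies inside the multiplicative group of a division ring $D$, the $\Q$-subalgebra generated by $G$ is a finite dimensional $\Q$-domain, hence a division algebra, and since it is a simple epimorphic image of $\Q G$ it is a Wedderburn component $\Q G e$ with $g\mapsto ge$ injective; conversely any faithful division algebra component of $\Q G$ yields such an embedding. (In characteristic $p$ a finite domain is a field, so only cyclic groups arise there, and these are already on the list.) So the theorem amounts to classifying the finite $G$ for which $\Q G$ has a faithful Wedderburn component that is a division ring.

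Second, I would extract the classical necessary condition. A non-identity element $g\in D^{*}$ satisfies an irreducible polynomial over $\Q$ not having $1$ as a root, so in every complex representation afforded by $D$ no non-trivial element of $G$ fixes a vector; thus $G$ carries a fixed point free representation and is a Frobenius complement. By the Zassenhaus--Wolf description of Frobenius complements, every Sylow subgroup of $G$ is cyclic or generalized quaternion, the solvable ones are --- aside from the generalized quaternion and binary polyhedral pieces --- metacyclic of the form $C_m\rtimes_k C_n$ with $\gcd(m,n)=1$, and the only non-solvable ingredient is $\SL(2,5)$. This confines the candidates to the finitely many arithmetically parametrized types in the statement.

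Third, for each candidate I would decide whether the relevant component of $\Q G$ is genuinely a division ring. For the metacyclic families one can pin down the faithful primitive central idempotent by \cref{SSPmetacyclic} and then read off the component from \cref{main} (with $F=\Q$): it is a crossed product $\Q(\zeta_k)\ast^{\sigma}_{\tau}(E/H)$, a cyclic cyclotomic algebra $(\Q(\zeta_k)/\Fix(E/H),\sigma,\zeta_k^{c})$, whose two-cocycle is explicit in terms of the presentation. Such an algebra is a division ring precisely when its Schur index equals its degree $[E:H]$, which by the Albert--Brauer--Hasse--Noether local-global principle is a condition on the Hasse invariants at each place. The wildly ramified place is the prime $q\mid|C_n|$ acting on the $p$-part $P_p$ of $C_m$; computing its local index produces conditions (b) and (c), the three-way split in (c) being exactly the quaternion-at-$2$ dichotomy according to whether $p\equiv 1$ or $p\equiv -1 \pmod 4$, while condition (a), $q\cdot o_{q^{v_q(k)}}(p)\nmid o_{mn/(|P_p||R_p|)}(p)$, forces the remaining tame places to contribute no further splitting. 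The generalized quaternion family $Q_{4k}$ with $k$ even, together with $Q_8\times M$ and $\SL(2,3)\times M$ under $o_{|M|}(2)$ odd, reduces to the ramification of $\quat{-1,-1}{\Q}$ over $\Q(\zeta_{|M|})$ (the residue degree at $2$ being $o_{|M|}(2)$), and the two sporadic non-solvable cases are settled by exhibiting $\OO^{*}$ and $\SL(2,5)$ inside explicit quaternion division algebras over $\Q(\sqrt 2)$ and $\Q(\sqrt 5)$.

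The main obstacle is the Schur-index bookkeeping: controlling the local invariant at the wildly ramified prime $q$ --- this is where the comparisons of $v_q(k)$ with $v_q(p-1)$ and $v_q(p+1)$ and the $q=2$ exception enter --- and proving that the product of all local contributions equals the full degree exactly under (a)--(c), in particular that the contributions attached to different primes $p\mid m$ combine cleanly rather than cancelling. Verifying that the resulting list is exhaustive, and separating the genuinely non-solvable and generalized quaternion cases cleanly from the metacyclic skeleton, is the bulk of the remaining work.
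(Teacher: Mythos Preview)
The paper does not prove this statement. \Cref{SubgruposAD} is stated as a classical result of Amitsur, with attribution to \cite{Amitsur} (and the formulation taken from \cite{1986Shirvani} and \cite{2013Caicedo}); no proof is given, and the theorem is used only as input for the rest of the paper. So there is no proof in the paper to compare your proposal against.

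That said, your outline follows the standard architecture of Amitsur's original argument: reduce to the existence of a faithful division-algebra component of $\Q G$, observe that $G$ must be a Frobenius complement (fixed-point-free action), use the Zassenhaus--Wolf structure theory to cut down to the listed families, and then decide each case by computing Schur indices via the local-global principle for cyclic cyclotomic algebras. This is correct in spirit. Two cautions. First, the reduction ``$G$ in a division ring $\Leftrightarrow$ $\Q G$ has a faithful division component'' is fine, but the Frobenius-complement step as you phrase it is slightly loose: you should argue directly that every abelian subgroup of $G$ is cyclic (since a finite subgroup of $D^{*}$ generates a finite-dimensional division subalgebra), which is the hypothesis that feeds into the Zassenhaus classification; invoking a ``complex representation afforded by $D$'' presumes an embedding you have not justified. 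Second, the arithmetic bookkeeping you flag as the ``main obstacle'' is genuinely the heart of the proof, and your sketch does not yet isolate the key lemma (that for the metacyclic groups the global Schur index is the product of the local $q$-contributions, one for each prime $q\mid n$, with no cancellation). If you intend to write a self-contained proof, that lemma needs to be stated and proved explicitly; otherwise this remains a plausible plan rather than a proof.
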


\begin{remark}\label{remarkZ}
Assume that $G=C_m\rtimes_k C_n$ is a group satisfying the hypothesis in (Z)(c) in \cref{SubgruposAD}. Let $p$ be a prime divisor of $m$ such that $C_n$ acts non-trivially on $P_p$. Let $q_1,...,q_h$ be the prime divisors $q$ of $n$ such that $Q_q$ acts non-trivially on $P_p$, so $R_p=Q_{q_1}\cdots Q_{q_h}$. Let $k_p$ be the order of the kernel of the action of $R_p$ on $P_p$. Then $P_p\rtimes_{k_p} R_p$ is a direct factor of $G$. From the conditions on (Z)(c) it follows that for every prime divisor $q_i$ of $|R_p|$, we have $v_{q_i}(p-1)\le v_{q_i}(k)$. Since $C_n=\prod_{p\mid m} R_p$ is a direct product, all $X_p$ are mutually disjoint, $k=\prod_{p\mid m}k_p$ and $k_p$ is only divisible by $q$ if $q\in X_p$. Therefore $v_{q_i}(k)=v_{q_i}(k_p)$.

Let $p^{\gamma}$ be the order of $P_p$, and let the action of $R_p$ on $P_p$ be defined by $\sigma: R_p \rightarrow \Aut(P_p)$, then $R_p/\text{Ker}(\sigma) \iso \text{Im}(\sigma)$, and hence $\frac{|R_p|}{k_p}$ divides $|\Aut(P_p)|=p^{\gamma -1}(p-1)$. Thus $\frac{|R_p|}{k_p}$ divides $p-1$. 
 
On the other hand, for all $1\le i\le h$, let $q_i^{\beta_i}$ and $q_i^{\alpha_i}$ be the order of $Q_{q_i}$ and the order of the kernel of the action of $Q_{q_i}$ on $P_p$, respectively. Hence $|R_p|=q_1^{\beta_1}\cdots q_h^{\beta_h}$ and $k_p=q_1^{\alpha_1}\cdots q_h^{\alpha_h}$. Given a fixed $i$, as $P_p\rtimes_{q_i^{\alpha_i}} Q_{q_i}$ is not cyclic, $\beta_i\ne \alpha_i$, so that $q_i$ divides $\frac{|R_p|}{k_p}$. It now follows that 
\begin{equation}\label{Zinequality}
1\le v_{q_i}\left(\frac{|R_p|}{k_p}\right)\le v_{q_i}(p-1)\le v_{q_i}(k_p).
\end{equation}

Note that we can deduce from \eqref{Zinequality} that every prime divisor of $|R_p|$ is a prime divisor of $\frac{|R_p|}{k_p}$ and of $k_p$. In particular, this means that any prime divisor of $n$ divides both $k$ and $\frac{n}{k}$. Finally, by \eqref{Zinequality} every $\alpha_i$ is not zero, this implies that none $Q_{q_i}$ acts faithfully on $P_p$, and so does $R_p$. So $k_p\neq 1$, and in particular $k\neq 1$. 
\end{remark}

Let $F$ be a number field and $A$ be a central simple $F$-algebra. The \emph{degree} of $A$ is defined as $\sqrt{\dim_F A}$. Also, $A=M_n(D)$ for some division algebra $D$. The \emph{Schur index} of $A$, denoted by $\ind(A)$, is the degree of $D$.

A prime of $F$ is an equivalence class of valuations of $F$. There are the infinite primes of $F$ arising from embeddings of $F$ into $\C$ and the finite primes of $F$, arising from discrete $\mathfrak{p}$-adic valuations of $F$, with $\mathfrak{p}$ ranging over the distinct prime ideals in the ring of integers of $F$. We denote by $F_{\mathfrak{p}}$ the $\mathfrak{p}$-adic completion of $F$. Then the \emph{local Schur index} of $A$ at $\mathfrak{p}$ is defined as $m_{\mathfrak{p}}(A)=\ind(F_{\mathfrak{p}}\otimes_F A)$.

Let $L$ be a number field containing $F$, $R$ be the ring of integers of $F$, $S$ the ring of integers of $L$ and $\mathfrak{P}$ a finite prime of $S$. Then $\mathfrak{P}\cap R=\mathfrak{p}$, a prime in $R$. We say that $\mathfrak{P}$ lies over $\mathfrak{p}$. The primes lying over a given prime $\mathfrak{p}$ are the $\mathfrak{P}_1,...,\mathfrak{P}_s$ which occur in the prime decomposition of $\mathfrak{p}S=\mathfrak{P}_1^{e_1}\cdots\mathfrak{P}_s^{e_s}$. We also say that $\mathfrak{P}_i$ divides $\mathfrak{p}$. The exponents $e_i$ are called the ramification indices and are denoted by $e_{\mathfrak{p}}^{\mathfrak{P}_i}(L/F)$. If $\mathfrak{P}$ is a prime lying over $\mathfrak{p}$ in the extension $L/F$, then $R/\mathfrak{p}$ can be viewed as a subfield of $S/\mathfrak{P}$ and we call the degree of $S/\mathfrak{P}$ over $R/\mathfrak{p}$ the residue degree of $\mathfrak{P}$ over $\mathfrak{p}$ and denote it by $f_{\mathfrak{p}}^{\mathfrak{P}}(L/F)$.
If $L/F$ is a normal extension and $\mathfrak{p}$ is a prime of $R$, then the Galois group $\Gal(L/F)$ permutes transitively the primes lying over $\mathfrak{p}$. Therefore both the ramification index and residue degree are independent on the choice of prime lying over $\mathfrak{p}$ and we write $e_{\mathfrak{p}}(L/F)$ and $f_{\mathfrak{p}}(L/F)$.

The following theorem is a well-known consequence of the Brauer-Hasse-Noether-Albert Theorem.
\begin{theorem}\label{lcm}
 Let $A$ be a central simple $F$-algebra having local indices $\{m_{\mathfrak{p}}(A)\}$. Then we have $\ind(A)=\lcm\{m_{\mathfrak{p}}(A)\}$.
\end{theorem}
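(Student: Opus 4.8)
The plan is to pass to the Brauer group and combine two inputs: the global Albert--Brauer--Hasse--Noether theorem and its local counterpart. Write $\exp(A)$ for the order of the class $[A]$ in the Brauer group $\operatorname{Br}(F)$. The statement follows once we know (i) $\ind(A)=\exp(A)$ over the number field $F$, and (ii) $\exp(A)=\lcm\{m_{\mathfrak{p}}(A)\}$.

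For (ii) I would invoke the fundamental exact sequence of global class field theory
$$0 \longrightarrow \operatorname{Br}(F) \longrightarrow \bigoplus_{\mathfrak{p}} \operatorname{Br}(F_{\mathfrak{p}}) \xrightarrow{\ \sum_{\mathfrak{p}}\operatorname{inv}_{\mathfrak{p}}\ } \Q/\Z \longrightarrow 0,$$
where $\mathfrak{p}$ runs over all primes of $F$, finite and infinite, and the first arrow sends $[A]$ to the family of its localizations $[F_{\mathfrak{p}}\otimes_F A]$. This arrow is injective, and $[F_{\mathfrak{p}}\otimes_F A]=0$ for all but finitely many $\mathfrak{p}$, so the order of $[A]$ equals the order of its image; in a direct sum the order of an element is the least common multiple of the orders of its components, whence $\exp(A)=\lcm_{\mathfrak{p}}\bigl(\operatorname{ord}[F_{\mathfrak{p}}\otimes_F A]\bigr)$, the $\lcm$ being effectively over a finite set. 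Over a local field, over $\R$, and trivially over $\C$, the Schur index of a central simple algebra coincides with its exponent --- immediate from the classification of central division algebras over local fields by their Hasse invariant --- so $\operatorname{ord}[F_{\mathfrak{p}}\otimes_F A]=\ind(F_{\mathfrak{p}}\otimes_F A)=m_{\mathfrak{p}}(A)$ by the very definition of the local Schur index. This gives (ii).

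Step (i) is the global index-equals-exponent theorem, the standard consequence of Albert--Brauer--Hasse--Noether together with Hasse's reciprocity (sum) theorem identifying the image of the second arrow above: a central division algebra realising prescribed local invariants can be taken of degree equal to the common value of its index and its exponent. This is the only substantial input; granting it and combining with (ii) yields $\ind(A)=\exp(A)=\lcm\{m_{\mathfrak{p}}(A)\}$. Since the theorem is presented precisely as a consequence of Albert--Brauer--Hasse--Noether, there is no genuine obstacle beyond quoting that result; all the remaining work is the bookkeeping with the Brauer group sequence carried out in (ii).
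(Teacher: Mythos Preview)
Your argument is correct and is precisely the standard deduction from the Albert--Brauer--Hasse--Noether theorem via the Brauer group exact sequence. The paper does not supply a proof at all: it merely records the statement as ``a well-known consequence of the Brauer--Hasse--Noether--Albert Theorem'' and moves on, so your write-up in fact fills in exactly the details the paper leaves to the reader.
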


The following proposition is due to Olteanu.
\begin{proposition}[{\cite[Proposition 2.13]{2009Olteanu}}]\label{olteanu}
Let $A=(F(\zeta_n)/F,\sigma,\zeta_m)$ for $F$ a number field. If $\mathfrak{p}$ is a prime of $F$, then $m_{\mathfrak{p}}(A)$ divides $m$. If $m_{\mathfrak{p}}(A)\neq 1$ for a finite prime $\mathfrak{p}$, then $\mathfrak{p}$ divides $n$ in $F$. 
 \end{proposition}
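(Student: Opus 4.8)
The plan is to exploit that $A=(F(\zeta_n)/F,\sigma,\zeta_m)$ is a \emph{cyclic} algebra. Write $L=F(\zeta_n)$ and $d=[L:F]$, so that $\Gal(L/F)=\GEN{\sigma}$ is cyclic of order $d$ and $u_\sigma^d=\zeta_m$. Since $u_\sigma^d$ commutes with $L$ (because $\sigma^d=\id$) and with $u_\sigma$, and $A$ is generated as an $F$-algebra by $L$ and $u_\sigma$, we get $u_\sigma^d\in\mathcal Z(A)=F$; thus $\zeta_m\in F^*$, and being a root of unity it lies in $\OO_{F_\mathfrak{p}}^*$ for every finite prime $\mathfrak{p}$. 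Both assertions will then follow by passing to the completions $F_\mathfrak{p}$, using the description of $F_\mathfrak{p}\otimes_F A$ as a matrix ring over a local cyclic algebra, and using that over a local field the Schur index of a central simple algebra equals its exponent.

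For the divisibility $m_\mathfrak{p}(A)\mid m$ I would first recall the standard behaviour of cyclic algebras under tensor products: $(L/F,\sigma,a)\otimes_F(L/F,\sigma,b)$ is Brauer equivalent to $(L/F,\sigma,ab)$, as $a\mapsto(L/F,\sigma,a)$ induces the isomorphism $F^*/N_{L/F}(L^*)\cong\mathrm{Br}(L/F)$ (see \cite{Reiner1975,Pierce1982}). Hence the Brauer class $[A]$ satisfies $[A]^{\otimes m}=[(L/F,\sigma,\zeta_m^m)]=[(L/F,\sigma,1)]=0$, so $\exp(A)\mid m$. For any prime $\mathfrak{p}$ of $F$ the base change $\mathrm{Br}(F)\to\mathrm{Br}(F_\mathfrak{p})$ is a group homomorphism, so $\exp(F_\mathfrak{p}\otimes_F A)\mid\exp(A)\mid m$; and over a non-archimedean local field (as well as over $\R$ and $\C$) the index of a central simple algebra equals its exponent. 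Therefore $m_\mathfrak{p}(A)=\ind(F_\mathfrak{p}\otimes_F A)=\exp(F_\mathfrak{p}\otimes_F A)\mid m$.

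For the second statement I argue the contrapositive: let $\mathfrak{p}$ be a finite prime of $F$ with $\mathfrak{p}\nmid n$, and show $m_\mathfrak{p}(A)=1$. Since $\mathfrak{p}\nmid n$, the residue characteristic of $\mathfrak{p}$ does not divide $n$, so $L=F(\zeta_n)/F$ is unramified at $\mathfrak{p}$. Fix a prime $\mathfrak{P}$ of $L$ over $\mathfrak{p}$ and put $d_\mathfrak{p}=[L_\mathfrak{P}:F_\mathfrak{p}]$; the decomposition group is the unique subgroup $\GEN{\sigma^{d/d_\mathfrak{p}}}$ of $\GEN{\sigma}$ of order $d_\mathfrak{p}$, and $L_\mathfrak{P}/F_\mathfrak{p}$ is an unramified cyclic extension generated by $\sigma^{d/d_\mathfrak{p}}$. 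By the standard description of the completion of a crossed product (\cite{Reiner1975}),
\[
F_\mathfrak{p}\otimes_F A\;\cong\;M_{d/d_\mathfrak{p}}\!\bigl((L_\mathfrak{P}/F_\mathfrak{p},\,\sigma^{d/d_\mathfrak{p}},\,\zeta_m)\bigr),
\]
so $m_\mathfrak{p}(A)=\ind(L_\mathfrak{P}/F_\mathfrak{p},\sigma^{d/d_\mathfrak{p}},\zeta_m)$. Now $\zeta_m\in\OO_{F_\mathfrak{p}}^*$, and for an unramified extension of local fields the norm $N_{L_\mathfrak{P}/F_\mathfrak{p}}\colon\OO_{L_\mathfrak{P}}^*\to\OO_{F_\mathfrak{p}}^*$ is surjective, so $\zeta_m\in N_{L_\mathfrak{P}/F_\mathfrak{p}}(L_\mathfrak{P}^*)$. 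Since a cyclic algebra $(M/K,\tau,a)$ splits precisely when $a$ is a norm from $M$, the local algebra is split and $m_\mathfrak{p}(A)=1$.

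The bookkeeping in the second paragraph is routine; the steps that need the most care — though all of it is classical local--global theory — are the explicit identification of $F_\mathfrak{p}\otimes_F A$ with a matrix ring over the single local cyclic algebra $(L_\mathfrak{P}/F_\mathfrak{p},\sigma^{d/d_\mathfrak{p}},\zeta_m)$ (keeping track of the decomposition group and the twisting under completion) and the classical fact that the norm is surjective on units of an unramified extension of local fields (which reduces to surjectivity of the norm on finite fields together with a successive-approximation argument). Once these are in place, both conclusions are immediate.
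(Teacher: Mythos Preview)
Your argument is correct. The paper itself does not supply a proof of this proposition; it simply quotes the result from Olteanu \cite{2009Olteanu}, so there is no ``paper's proof'' to compare against beyond the citation.

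What you have written is precisely the classical route one would expect. The first assertion reduces to the multiplicativity of the symbol $a\mapsto[(L/F,\sigma,a)]$ in the Brauer group together with the equality of index and exponent over local (and archimedean) fields; the second to the fact that an unramified local extension has surjective norm on units, so a unit twisting element forces the local cyclic algebra to split. Your remark that $\zeta_m$ lies in $F^*$ (as $u_\sigma^d$ is central) is exactly the observation needed to place $\zeta_m$ inside $\OO_{F_\mathfrak{p}}^*$, and your identification
\[
F_\mathfrak{p}\otimes_F A\;\cong\;M_{d/d_\mathfrak{p}}\bigl((L_\mathfrak{P}/F_\mathfrak{p},\,\sigma^{d/d_\mathfrak{p}},\,\zeta_m)\bigr)
\]
via the idempotent decomposition of $F_\mathfrak{p}\otimes_F L$ and the decomposition group is the standard computation (this is in Reiner, as you indicate). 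There are no gaps; the points you flag as ``needing care'' are routine once the references are consulted.
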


We also have the following reciprocity rules.

\begin{proposition}[{\cite[Proposition 1.6.2]{1963Weiss}}]\label{reciprocity}
 Let $K/L/F$ be normal extensions of number fields. Let $\mathfrak{p}$ be a prime in $F$ and $\mathfrak{P}$ a prime in $L$ lying over $\mathfrak{p}$. Then
\begin{enumerate}
 \item $e_{\mathfrak{p}}(K/F)=e_{\mathfrak{P}}(K/L) e_{\mathfrak{p}}(L/F)$;
 \item $f_{\mathfrak{p}}(K/F)=f_{\mathfrak{P}}(K/L) f_{\mathfrak{p}}(L/F)$.
\end{enumerate}
\end{proposition}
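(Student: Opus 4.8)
The plan is to derive both identities from the elementary multiplicativity of ramification indices and residue degrees in a tower of primes, and then to invoke normality to rewrite everything in the prime-independent notation fixed just before the statement. First, note that since $K/F$ and $L/F$ are normal, the intermediate extension $K/L$ is also normal (as $K$ is a splitting field over $F$, hence over $L$), so all three symbols $e_{\mathfrak{p}}(K/F)$, $e_{\mathfrak{P}}(K/L)$, $e_{\mathfrak{p}}(L/F)$ and their $f$-counterparts are well-defined. Fix a prime $\mathfrak{Q}$ of $\OO_K$ lying over $\mathfrak{P}$; then $\mathfrak{Q}$ also lies over $\mathfrak{p}$.

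For (1), write $\mathfrak{p}\OO_L=\prod_{i}\mathfrak{P}_i^{e(\mathfrak{P}_i/\mathfrak{p})}$ with $\mathfrak{P}=\mathfrak{P}_1$, and $\mathfrak{P}_i\OO_K=\prod_{j}\mathfrak{Q}_{ij}^{e(\mathfrak{Q}_{ij}/\mathfrak{P}_i)}$ with $\mathfrak{Q}=\mathfrak{Q}_{11}$. Extending the first factorization to $\OO_K$ and substituting the second gives $\mathfrak{p}\OO_K=\prod_{i,j}\mathfrak{Q}_{ij}^{e(\mathfrak{Q}_{ij}/\mathfrak{P}_i)\,e(\mathfrak{P}_i/\mathfrak{p})}$. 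On the other hand $\mathfrak{p}\OO_K=\prod_{i,j}\mathfrak{Q}_{ij}^{e(\mathfrak{Q}_{ij}/\mathfrak{p})}$, since the $\mathfrak{Q}_{ij}$ are precisely the pairwise distinct primes of $\OO_K$ over $\mathfrak{p}$. Comparing the exponent of $\mathfrak{Q}$ and using unique factorization of ideals in the Dedekind domain $\OO_K$ yields $e(\mathfrak{Q}/\mathfrak{p})=e(\mathfrak{Q}/\mathfrak{P})\,e(\mathfrak{P}/\mathfrak{p})$. For (2), apply the tower law for field extensions to the chain of residue fields $\OO_F/\mathfrak{p}\subseteq\OO_L/\mathfrak{P}\subseteq\OO_K/\mathfrak{Q}$, which are finite extensions with the inclusions described before the statement; this gives $f(\mathfrak{Q}/\mathfrak{p})=f(\mathfrak{Q}/\mathfrak{P})\,f(\mathfrak{P}/\mathfrak{p})$.

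Finally, since $K/F$, $K/L$ and $L/F$ are normal, the discussion preceding the statement (transitivity of the Galois action on the primes above a given prime) shows $e(\mathfrak{Q}/\mathfrak{p})=e_{\mathfrak{p}}(K/F)$, $e(\mathfrak{Q}/\mathfrak{P})=e_{\mathfrak{P}}(K/L)$, $e(\mathfrak{P}/\mathfrak{p})=e_{\mathfrak{p}}(L/F)$, and likewise for $f$; substituting these into the two multiplicativity relations gives the asserted identities. There is no real obstacle here; the only points demanding a little care are the bookkeeping in the comparison of ideal factorizations in (1) — in particular that the $\mathfrak{Q}_{ij}$ are genuinely pairwise distinct and exhaust the primes of $\OO_K$ over $\mathfrak{p}$ — and checking at the outset that the prime-independence hypotheses apply to all three extensions involved.
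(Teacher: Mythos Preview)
Your proof is correct and is the standard argument for multiplicativity of $e$ and $f$ in towers. Note that the paper does not supply its own proof of this proposition: it is quoted verbatim from \cite[Proposition~1.6.2]{1963Weiss} and used as a black box, so there is no in-paper argument to compare against. Your write-up is exactly what one finds in Weiss or any standard algebraic number theory text.
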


Because of these rules, the notations $e_p(L/F)$ and $f_p(L/F)$ are unambiguously for normal extensions $L/F/\Q$ and rational primes $p$.

Similar as for ramification index and residue degree, we can also restrict the computation of local Schur indices to rational primes when working over abelian number fields.

\begin{theorem}[\cite{Benard}]
Let $F$ be an abelian number field and $A$ a central simple $F$-algebra.
As $\mathfrak{p}$ runs over the set of primes lying over the same (infinite or finite) rational prime $p$, the local indices $m_{\mathfrak{p}}(A)$ are all equal to the same positive integer, which we call the $p$-local index of $A$ and denote by $m_p(A)$.
\end{theorem}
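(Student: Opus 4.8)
Throughout I take $A$ to be (Brauer equivalent to) a Wedderburn component of a group algebra $FG$ of a finite group --- equivalently, the class $[A]$ lies in the Schur subgroup $S(F)$ of $\textnormal{Br}(F)$ --- which is the only situation in which the statement is used below; set $m=\ind(A)$. The plan is to reduce the assertion to a single instance of Galois invariance. Since $F/\Q$ is abelian it is in particular \emph{normal}, so $\Gal(F/\Q)$ acts transitively on the primes of $F$ (finite or infinite) lying over a fixed rational prime $p$; it therefore suffices to prove $m_{\mathfrak{p}}(A)=m_{\sigma\mathfrak{p}}(A)$ for every $\sigma\in\Gal(F/\Q)$ and every prime $\mathfrak{p}$ of $F$ over $p$. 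Now $\sigma$ extends to a topological isomorphism $F_{\mathfrak{p}}\xrightarrow{\sim}F_{\sigma\mathfrak{p}}$ of completions which carries $F_{\mathfrak{p}}\otimes_F A$ onto $F_{\sigma\mathfrak{p}}\otimes_F A^{\sigma}$, where $A^{\sigma}$ denotes $A$ with its $F$-module structure twisted by $\sigma$; hence $m_{\mathfrak{p}}(A)=m_{\sigma\mathfrak{p}}(A^{\sigma})$, and the point reduces to showing that $A$ and $A^{\sigma}$ have the same local index at $\sigma\mathfrak{p}$. For this it is enough to know that $[A^{\sigma}]=[A]$ in $\textnormal{Br}(F)$.

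Proving $[A^{\sigma}]=[A]$ is the heart of the matter, and it is here that the origin of $A$ in a group algebra is indispensable (the equality can fail for arbitrary central simple algebras over abelian fields). First, the companion part of the Benard--Schacher circle of results says that $[A]\in S(F)$ of index $m$ forces $\zeta_m\in F$. Secondly, by the Brauer--Witt theorem $[A]$ is represented by a cyclotomic algebra over $F$, say $(\Q(\zeta_\ell)/F,\beta)$ with $F\subseteq\Q(\zeta_\ell)$ (possible because $F/\Q$ is abelian, by Kronecker--Weber); in the strongly monomial setting of \cref{main} one may even take a cyclic cyclotomic algebra $(F(\zeta)/F,\sigma_n,\zeta^c)$. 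For such algebras the Galois twist is transparent: if the chosen extension of $\sigma$ to $\Q(\zeta_\ell)$ acts by $\zeta_\ell\mapsto\zeta_\ell^{t}$, then the cocycle $\beta$ (valued in roots of unity) is scaled by $t$, so $[A^{\sigma}]=[A]^{t}$. Because $\zeta_m\in F$, the automorphism $\sigma$ fixes $\zeta_m$, so $t\equiv 1\pmod m$; and since over a number field the period of a Brauer class equals its index, $[A]$ has order exactly $m$, whence $[A]^{t}=[A]$. Thus $[A^{\sigma}]=[A]$, so $A\iso A^{\sigma}$ as $F$-algebras and in particular $m_{\sigma\mathfrak{p}}(A^{\sigma})=m_{\sigma\mathfrak{p}}(A)$. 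Combined with the reduction above this gives $m_{\mathfrak{p}}(A)=m_{\sigma\mathfrak{p}}(A)$ for every $\sigma$, so $\mathfrak{p}\mapsto m_{\mathfrak{p}}(A)$ is constant on the primes over $p$; we set $m_p(A)$ equal to that common value. The infinite primes need no separate treatment --- $\Gal(F/\Q)$ is transitive on them as well and the same twist applies, now with $m_{\mathfrak{p}}(A)\in\{1,2\}$.

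I expect the main obstacle to be precisely the identity $[A^{\sigma}]=[A]$, that is, the fact that conjugating a Schur class by $\sigma$ merely raises it to the power prescribed by the action of $\sigma$ on roots of unity; pinning this down cleanly requires the Brauer--Witt reduction together with the explicit transformation law of cyclotomic algebras under automorphisms of the base field. Should one prefer to avoid Brauer--Witt, a more computational route --- built from the tools assembled in \cref{preliminaries} --- is available: reduce to a cyclic cyclotomic algebra $(F(\zeta)/F,\sigma_n,\zeta^c)$; observe that $F(\zeta)/\Q$ is again abelian, hence normal, so by the remark following \cref{reciprocity} the ramification index $e_p(F(\zeta)/F)$, the residue degree $f_p(F(\zeta)/F)$ and the way $p$ splits all depend on $p$ alone; then Hasse's invariant formula for cyclic algebras expresses each $m_{\mathfrak{p}}$ solely through $n$, $c$ and this $p$-local data, hence through $p$ alone. \cref{olteanu} already constrains the $m_{\mathfrak{p}}$ (each divides the order of $\zeta^c$ and equals $1$ unless $\mathfrak{p}$ divides $n$), and once $m_p(A)$ is defined \cref{lcm} sharpens to $\ind(A)=\lcm\{m_p(A)\}$, the least common multiple running over the rational primes $p$.
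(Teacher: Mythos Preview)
The paper does not give its own proof of this statement --- it is quoted from \cite{Benard} and used as a black box --- so there is no in-paper argument to compare against. Your overall strategy (transitivity of $\Gal(F/\Q)$ on the primes over $p$; the identity $m_{\mathfrak p}(A)=m_{\sigma\mathfrak p}(A^{\sigma})$ coming from the ring isomorphism $F_{\mathfrak p}\otimes_F A\cong F_{\sigma\mathfrak p}\otimes_F A^{\sigma}$; the Brauer--Witt reduction to a cyclotomic algebra; and the transformation law $[A^{\sigma}]=[A]^{t}$) is the standard one and is correct up to that point.

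The gap is the sentence ``Because $\zeta_m\in F$, the automorphism $\sigma$ fixes $\zeta_m$, so $t\equiv 1\pmod m$.'' Your $\sigma$ lies in $\Gal(F/\Q)$, not in $\Gal(\overline{\Q}/F)$: it is an automorphism of $F$ over $\Q$ and has no reason to fix $\zeta_m$ (take $F=\Q(i)$, $m=4$, $\sigma$ complex conjugation). Consequently $[A^{\sigma}]=[A]$ is in general \emph{false} even for classes in the Schur subgroup; what Benard actually proves is $\mathrm{inv}_{\sigma\mathfrak p}(A)=t\cdot\mathrm{inv}_{\mathfrak p}(A)$, so distinct primes over $p$ can carry distinct Hasse invariants (for index $3$, say $\tfrac13$ and $\tfrac23$), and only their denominators coincide. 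The repair uses exactly the pieces you already have: you do not need $t\equiv1\pmod m$, only $\gcd(t,m)=1$, and this holds because your extension of $\sigma$ is an automorphism of $\Q(\zeta_\ell)$ (so $\gcd(t,\ell)=1$) while $m\mid\ell$ (since $\zeta_m\in F\subseteq\Q(\zeta_\ell)$). Then $m_{\sigma\mathfrak p}(A^{\sigma})=m_{\sigma\mathfrak p}([A]^{t})=m_{\sigma\mathfrak p}(A)$, because multiplying a local Hasse invariant by a unit modulo its denominator leaves that denominator unchanged. With this correction your chain $m_{\mathfrak p}(A)=m_{\sigma\mathfrak p}(A^{\sigma})=m_{\sigma\mathfrak p}(A)$ closes and the proof goes through.
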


The following proposition is folklore. We provide a proof for reasons of completeness.

\begin{proposition}\label{splitting}
 Let $A = \quat{a,b}{K}$ with $K$ a finite Galois extension of $\Q$, $a, b \in K$ totally negative and let $F$ be a finite Galois extension of $\Q$ containing $K$. Then $F \otimes_K A$ is a division algebra if and only if $F$ is a totally real number field or 
 there exists a prime number $p$ such that $m_{p}(A) \not= 1$ and both $e_{p}(F/K)$ and $f_{p}(F/K)$ are odd.
\end{proposition}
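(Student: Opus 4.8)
The plan is to reduce the global statement to a purely local computation of Schur indices using \cref{lcm} and the Benard theorem quoted above, since $F\otimes_K A$ is again a quaternion algebra over $F$, hence division if and only if it is not split, if and only if its Schur index is $2$, if and only if $m_p(F\otimes_K A)=2$ for some rational prime $p$. So I would first analyze the infinite primes and then the finite primes separately.

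For the infinite primes: because $a,b\in K$ are totally negative, $\R\otimes_{K,\iota}A\iso\quat{-1,-1}{\R}=\H$ for every embedding $\iota$ of $K$ into $\R$; in particular $m_\infty(A)=2$ (the real prime is ramified). Now if $F$ is totally real, then every infinite prime of $F$ lying over a real prime of $K$ is real, so $\R\otimes_F(F\otimes_K A)\iso\H$ is still a division algebra, giving $m_\infty(F\otimes_K A)=2$ and hence $F\otimes_K A$ is division. Conversely, if $F$ is not totally real, then $F$ has a complex embedding; the corresponding completion is $\C$, which splits the (now $\C$-) quaternion algebra, so $F\otimes_K A$ is split at that infinite prime. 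Thus when $F$ is not totally real the infinite primes contribute nothing, and $F\otimes_K A$ is division if and only if it fails to split at some \emph{finite} prime.

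For the finite primes I would use the classical formula relating local indices under base change: if $\mathfrak P$ is a prime of $F$ over a prime $\mathfrak p$ of $K$, then the local index $m_{\mathfrak P}(F\otimes_K A)$ equals $m_{\mathfrak p}(A)$ divided by $\gcd\!\bigl(m_{\mathfrak p}(A),\, [F_{\mathfrak P}:K_{\mathfrak p}]\bigr)$, or rather — more precisely for quaternion algebras — $F_{\mathfrak P}\otimes_{K_{\mathfrak p}}A_{\mathfrak p}$ is split exactly when $m_{\mathfrak p}(A)\mid[F_{\mathfrak P}:K_{\mathfrak p}]$. Since $A$ has index $2$, $m_{\mathfrak p}(A)\in\{1,2\}$, so $F\otimes_K A$ is ramified at $\mathfrak P$ if and only if $m_{\mathfrak p}(A)=2$ and $[F_{\mathfrak P}:K_{\mathfrak p}]$ is odd, i.e.\ $e_{\mathfrak p}(F/K)f_{\mathfrak p}(F/K)$ is odd, i.e.\ both $e_{\mathfrak p}(F/K)$ and $f_{\mathfrak p}(F/K)$ are odd. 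By the Benard theorem applied to $K$ (an abelian number field, being Galois over $\Q$ — here I would note that the statement as given should be read with $K$ abelian, or invoke that the local index depends only on the underlying rational prime for the cyclotomic-type algebras in play), $m_{\mathfrak p}(A)=2$ depends only on the rational prime $p$ below $\mathfrak p$, namely $m_p(A)=2$; and by \cref{reciprocity} the parities of $e_{\mathfrak p}(F/K)$ and $f_{\mathfrak p}(F/K)$ are likewise governed by $e_p(F/K)$ and $f_p(F/K)$. Combining: when $F$ is not totally real, $F\otimes_K A$ is division if and only if there is a rational prime $p$ with $m_p(A)\neq1$ and both $e_p(F/K)$, $f_p(F/K)$ odd, which is exactly the stated condition.

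The main obstacle is getting the local base-change criterion for the quaternion (or cyclic) algebra stated cleanly and correctly, including the subtlety at $p=2$ and at ramified primes — one must be careful that $[F_{\mathfrak P}:K_{\mathfrak p}]=e_{\mathfrak p}(F/K)f_{\mathfrak p}(F/K)$ requires $F/K$ normal (which holds here since $F/\Q$ is Galois), and that the divisibility $m_{\mathfrak p}(A)\mid[F_{\mathfrak P}:K_{\mathfrak p}]$ is both necessary and sufficient for splitting only because $m_{\mathfrak p}(A)$ is prime. A secondary point to handle carefully is the hypothesis that $a,b$ are \emph{totally} negative, which is what guarantees $m_\infty(A)=2$ uniformly across all real places and hence that the totally-real case of $F$ always yields a division algebra regardless of finite-prime behavior.
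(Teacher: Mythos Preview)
Your approach is essentially the same as the paper's: reduce to local indices via \cref{lcm}, handle the infinite places using total negativity of $a,b$, and at finite places invoke the local splitting criterion $m_{\mathfrak p}(A)\mid[F_{\mathfrak P}:K_{\mathfrak p}]$ (the paper cites \cite[Corollary 31.10]{Reiner1975}) together with $[F_{\mathfrak P}:K_{\mathfrak p}]=e_{\mathfrak p}(F/K)f_{\mathfrak p}(F/K)$.

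Two small remarks. First, your parenthetical ``an abelian number field, being Galois over $\Q$'' is not right: Galois over $\Q$ does not imply abelian. You immediately hedge correctly that the notation $m_p(A)$ really wants $K$ abelian (via Benard), and indeed every application of this proposition in the paper has $K$ abelian; the paper's own proof simply uses the rational-prime notation without comment. Second, the local criterion $m_{\mathfrak p}(A)\mid[F_{\mathfrak P}:K_{\mathfrak p}]$ is both necessary and sufficient for splitting over any local field, not only when $m_{\mathfrak p}(A)$ is prime, so that caveat is unneeded. Your observation that $F$ Galois over $\Q$ forces $F$ to be either totally real or totally imaginary is what makes the ``infinite primes contribute nothing'' step go through, and is worth stating explicitly.
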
 

\begin{proof} Let $B = F \otimes_K A$. Note that $B$ is a division algebra if and only if $m_p(B) \neq 1$ for some prime number $p$.

Assume that $F$ is totally real, then there exists a real embedding $\sigma$ of $F$ and for the completion of this embedding we find $\mathbb{R} \otimes_{F} B \iso \quat{\sigma(a), \sigma(b)}{\mathbb{R}}$, hence $m_{\infty}(B) = 2$ and $B$ is a division algebra. 

Assume that $F$ is totally imaginary, then for all embeddings of $F$ all local Schur indices for infinite primes are $1$, since the completion of any embedding of $F$ is $\mathbb{C}$ and splits $B$.

Fix a finite prime $p\in \Q$. If $m_p(A) = 1$, then clearly $m_p(B) = 1$. If $m_p(A) \not= 1$, then it is $2$ and due to \cite[Corollary 31.10]{Reiner1975} or \cite[Satz 2 on p.\ 118]{Deuring}, $m_p(B) = 1$ if and only if $m_p(A) \mid [F_p: K_p]$.
Hence the result follows since $[F_p: K_p]=e_{p}(F/K) f_{p}(F/K)$.
\end{proof}

\cref{splitting} is a generalization of the following result.
\begin{corollary}[{\cite[Theorem 2.1.9]{1986Shirvani}}]\label{Shirvani-2.1.9}
 For $F$ a finite Galois extension of $\Q$, $F\otimes_{\Q}\quat{-1,-1}{\Q}$ is a division algebra if and only if $F$ is a totally real field or both $e_{2}(F/\Q)$ and $f_{2}(F/\Q)$ are odd.
\end{corollary}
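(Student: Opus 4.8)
The target is \cref{Shirvani-2.1.9}, which is the special case $A = \quat{-1,-1}{\Q}$, $K = \Q$ of \cref{splitting}. The plan is simply to specialize the general proposition and check that the hypotheses are met. First I would verify that $\quat{-1,-1}{\Q}$ satisfies the standing assumptions of \cref{splitting} with $K = \Q$: the entries $a = b = -1$ are totally negative in $\Q$ (trivially, $\Q$ being totally real with the unique real place sending $-1 \mapsto -1$), and $\Q$ is a finite Galois extension of $\Q$, so the setup applies to any finite Galois extension $F/\Q$ containing $\Q$, i.e.\ to an arbitrary finite Galois extension $F$ of $\Q$.

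Next I would plug into the conclusion of \cref{splitting}: $F \otimes_{\Q} \quat{-1,-1}{\Q}$ is a division algebra if and only if $F$ is totally real, or there exists a prime $p$ with $m_p\!\left(\quat{-1,-1}{\Q}\right) \neq 1$ and both $e_p(F/\Q)$ and $f_p(F/\Q)$ odd. It then remains to identify, among all rational primes, for which $p$ one has $m_p\!\left(\quat{-1,-1}{\Q}\right) \neq 1$. This is the classical computation of the Hasse invariants of the Hamilton quaternions: $\quat{-1,-1}{\Q}$ ramifies exactly at the infinite prime and at $2$, so $m_\infty = m_2 = 2$ and $m_p = 1$ for all odd primes $p$. (The infinite prime is already accounted for by the "totally real" clause, since $e_\infty = f_\infty = 1$ for a real place, while a totally imaginary $F$ contributes nothing at infinity as noted in the proof of \cref{splitting}.) Hence the only finite prime that can produce a nontrivial local index after base change is $p = 2$, and the existential clause collapses to the single condition that $e_2(F/\Q)$ and $f_2(F/\Q)$ are both odd.

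Putting these together yields exactly the statement: $F \otimes_{\Q} \quat{-1,-1}{\Q}$ is a division algebra if and only if $F$ is totally real or both $e_2(F/\Q)$ and $f_2(F/\Q)$ are odd. There is no real obstacle here; the only nonroutine input is knowing the ramification set $\{2, \infty\}$ of the rational Hamilton quaternion algebra, which is standard (e.g.\ via the Hilbert symbol $(-1,-1)_p$, or from the fact that $\quat{-1,-1}{\Q_p}$ splits for odd $p$ because the norm form $x^2 + y^2 + z^2$ is isotropic over $\Q_p$). If one prefers to avoid invoking this fact as a black box, one can instead note that $\quat{-1,-1}{\Q} \cong \quat{-1,-1}{\Q}$ has discriminant $2$ and reduced discriminant a product of an even number of primes including $\infty$, forcing the finite ramified set to be $\{2\}$.
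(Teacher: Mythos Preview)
Your proposal is correct and matches the paper's approach exactly: the paper does not give a separate proof of this corollary but presents it as the specialization of \cref{splitting} to $K=\Q$ and $A=\quat{-1,-1}{\Q}$, and you have filled in the one remaining detail (that the ramification set of $\quat{-1,-1}{\Q}$ is $\{2,\infty\}$) correctly.
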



Lately, Herman gave a nice review on the computation of local Schur indices for cyclic cyclotomic algebras in \cite{2014Herman}.

\begin{lemma}[{\cite[Lemma 2]{2014Herman}}]\label{hermaninfty}
 A cyclic cyclotomic algebra $A=(F(\zeta_n)/F,\sigma_b,\zeta_n^c)$ over an abelian number field $F$ has local index 2 at an infinite prime if and only if $F\subseteq \R$, $n>2$ and $\zeta_n^c=-1$.
\end{lemma}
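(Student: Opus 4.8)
The plan is to reduce the statement to a single local computation at a real infinite prime. Since $F$ is abelian, it is normal over $\Q$, hence either totally real or totally imaginary. If $F$ is totally imaginary, the completion at every infinite prime is $\C$, which splits every central simple $F$-algebra, so the local index of $A$ at every infinite prime equals $1$; this settles the totally imaginary case of both implications, and from now on I would assume $F\subseteq\R$.

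Next I would dispose of the small values of $n$. If $n\le 2$ then $\zeta_n\in\{1,-1\}\subseteq F$, so $F(\zeta_n)=F$, the group $\GEN{\sigma_b}$ is trivial and $A\iso F$ is a field, whose local index at every prime is $1$; hence local index $2$ forces $n>2$, and conversely I may assume $n>2$. With $F\subseteq\R$ and $n>2$ the number $\zeta_n$ is not real, so $F(\zeta_n)\neq F$, and since $F(\zeta_n)$ is normal over $\Q$ and contains $\zeta_n$ it is totally imaginary. Consequently a real infinite prime $\mathfrak{p}$ of $F$ has decomposition group of order $2$ in the cyclic group $\GEN{\sigma_b}$, so $d:=[F(\zeta_n):F]=o_n(b)$ is even and the decomposition group is the unique order-$2$ subgroup $\GEN{\sigma_b^{d/2}}$; in particular $\sigma_b^{d/2}$ is realised by complex conjugation, i.e.\ $\sigma_b^{d/2}(\zeta_n)=\zeta_n^{-1}$.

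I would then localise. By the standard description of the completion of a cyclic algebra \cite{Reiner1975}, for the real infinite prime $\mathfrak{p}$ one has
\[
A\otimes_F F_{\mathfrak{p}}\ \iso\ M_{d/2}\bigl((\C/\R,\ \mathrm{conj},\ \zeta_n^c)\bigr),
\]
where $\mathrm{conj}$ is complex conjugation, the generator of $\Gal(\C/\R)$ realised by $\sigma_b^{d/2}$. Here $\zeta_n^c$ lies in $F$: indeed $u_{\sigma_b}^d$ is central in $A$, hence in $Z(A)=F$, and equals $\zeta_n^c$ by definition of the cyclic cyclotomic algebra; being a root of unity in the totally real $F$ it is $\pm 1$. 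Using the classical isomorphisms $\bigl(\C/\R,\ \mathrm{conj},\ -1\bigr)\iso\quat{-1,-1}{\R}$, of Schur index $2$, and $\bigl(\C/\R,\ \mathrm{conj},\ 1\bigr)\iso M_2(\R)$, of Schur index $1$, one concludes that the local index of $A$ at $\mathfrak{p}$ is $2$ exactly when $\zeta_n^c=-1$ and $1$ when $\zeta_n^c=1$. By Benard's theorem this value is independent of the chosen infinite prime, and assembling the three reductions gives the equivalence.

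The only delicate points, all routine bookkeeping, are: checking that $u_{\sigma_b}^d$ is central so that $\zeta_n^c\in F$ (hence $\zeta_n^c=\pm1$ once $F\subseteq\R$); verifying that for the totally imaginary $F(\zeta_n)$ over the totally real $F$ the decomposition group at an infinite prime is exactly the order-$2$ subgroup $\GEN{\sigma_b^{d/2}}$, so that the cyclic-algebra localisation formula applies with the correct Frobenius substitute; and correctly matching $\zeta_n^c=-1$ (resp.\ $\zeta_n^c=1$) with $\quat{-1,-1}{\R}$ (resp.\ $M_2(\R)$). I do not expect any genuine difficulty beyond lining these up.
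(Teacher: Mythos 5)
Your argument is correct. Note that the paper itself offers no proof of this statement -- it is imported verbatim as \cite[Lemma 2]{2014Herman} -- so there is nothing internal to compare against; what you have written is a self-contained derivation of the cited result. All the key points check out: $F$ abelian forces $F$ totally real or totally imaginary; the 2-cocycle condition forces $\zeta_n^c\in F$, hence $\zeta_n^c=\pm1$ once $F\subseteq\R$; and for a real place with $F(\zeta_n)$ totally imaginary the decomposition group is the unique order-two subgroup $\GEN{\sigma_b^{d/2}}$, realised by complex conjugation. The one step where it is easy to go wrong is the localisation formula: some sources state the local component with the twisting element raised to the power $[F(\zeta_n):F]/|Z|$, which here would give $(\zeta_n^c)^{d/2}$ and a wrong answer (e.g.\ it would wrongly split $(\Q(\zeta_5)/\Q,\sigma,-1)\iso M_2\quat{-2,-5}{\Q}$ at infinity). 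The correct statement, which you use, keeps the twisting element unchanged: writing $A=\bigoplus_i F(\zeta_n)u^i$ and decomposing $F(\zeta_n)\otimes_F\R$ into $d/2$ copies of $\C$ permuted by $\sigma_b$ with stabiliser $\GEN{\sigma_b^{d/2}}$, the corner algebra $eAe$ for a primitive idempotent $e$ is spanned by $e$ and $v=eu^{d/2}$ with $v^2=eu^{d}=\zeta_n^c e$, giving $M_{d/2}\bigl((\C/\R,\mathrm{conj},\zeta_n^c)\bigr)$ exactly as you claim. With that in place the dichotomy $\quat{-1,-1}{\R}$ versus $M_2(\R)$ finishes the proof, and Benard's theorem (already quoted in the paper) makes the answer independent of the chosen infinite prime.
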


The following lemma follows from Janusz. 
\begin{lemma}[{\cite[Lemma 3.1]{1975Janusz}}]\label{hermanodd}
Let $p$ be an odd rational prime and $F$ an abelian number field. Let $e=e_p(F(\zeta_n)/F)$ and $f=f_p(F/\Q)$. 
Then 
\begin{eqnarray*}
 m_p(F(\zeta_n)/F,\sigma_b,\zeta_n^c) &=& \min\{l\in \N \mid \zeta_n^{cl}\in \GEN{\zeta_{p^{f}-1}^e}\}\\
&=&\min\left\{l\in \N \left| \frac{p^f-1}{\gcd(p^f-1,e)} \equiv 0 \mod \frac{n}{\gcd(n,cl)}\right.\right\}.
\end{eqnarray*}
\end{lemma}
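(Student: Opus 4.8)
The plan is to reduce the statement to the $p$-adic completions, where it becomes a question of local class field theory, and then to quote \cite[Lemma~3.1]{1975Janusz}; the equality of the two displayed expressions is then elementary.

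First I would localize. By \cite{Benard} the local indices $m_{\mathfrak p}(A)$ of $A=(F(\zeta_n)/F,\sigma_b,\zeta_n^c)$ are the same for all primes $\mathfrak p$ of $F$ lying over $p$, so it suffices to compute $\ind(A\otimes_F F_{\mathfrak p})$ for one such $\mathfrak p$. Since $A$ is a cyclic algebra, $A\otimes_F F_{\mathfrak p}$ is a matrix algebra over a cyclic cyclotomic algebra $B=(K(\zeta_n)/K,\sigma,\zeta_n^c)$ over the $p$-adic field $K:=F_{\mathfrak p}$, where $\sigma$ generates the decomposition group $\Gal(K(\zeta_n)/K)$ — a cyclic subgroup of $\Gal(F(\zeta_n)/F)$ generated by a power of $\sigma_b$ — and the twist is unchanged; hence $m_p(A)=\ind(B)$. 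By local class field theory $K^{*}/N_{K(\zeta_n)/K}(K(\zeta_n)^{*})\iso\Gal(K(\zeta_n)/K)$, so $\ind(B)$ is the order of the class of $\zeta_n^c$ in this cyclic group, and since $(K(\zeta_n)/K,\sigma,a)^{\otimes l}\sim(K(\zeta_n)/K,\sigma,a^{l})$ this order equals $\min\{\,l\in\N\mid\zeta_n^{cl}\in N_{K(\zeta_n)/K}(K(\zeta_n)^{*})\,\}$. So I would be reduced to recognising which roots of unity are norms from $K(\zeta_n)$.

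Now write $n=p^{a}m_{0}$ with $p\nmid m_{0}$, and recall that the residue field of $K$ is $\F_{p^{f}}$ with $f=f_p(F/\Q)$. Then $K(\zeta_{m_0})/K$ is the maximal unramified subextension (of residue degree $o_{m_0}(p^{f})$) and $K(\zeta_{p^{a}})/K$ is totally ramified of degree $e=e_p(F(\zeta_n)/F)$; these two degrees are coprime (the group $\Gal(K(\zeta_n)/K)$ being cyclic), $K(\zeta_n)$ is their internal compositum, and hence $N_{K(\zeta_n)/K}(\cdot)=N_{K(\zeta_{m_0})/K}(\cdot)\cap N_{K(\zeta_{p^{a}})/K}(\cdot)$. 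The first factor contains all units, so it imposes no condition on the root of unity $\zeta_n^{cl}$, and the analysis of the second factor — carried out through the reciprocity map, which identifies the unit part of $K^{*}$ modulo these norms with a quotient of the tame inertia $\F_{p^{f}}^{*}\iso\GEN{\zeta_{p^{f}-1}}$, the wild part of $e$ being a $p$-power and so dropping out of $\GEN{\zeta_{p^{f}-1}^{\,e}}=\GEN{\zeta_{p^{f}-1}^{\gcd(e,\,p^{f}-1)}}$ — is exactly the content of \cite[Lemma~3.1]{1975Janusz} and gives $\zeta_n^{cl}\in N_{K(\zeta_n)/K}(K(\zeta_n)^{*})\iff\zeta_n^{cl}\in\GEN{\zeta_{p^{f}-1}^{\,e}}$. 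Combining the steps yields the first equality.

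The second equality is a statement about cyclic groups: inside $\GEN{\zeta_{N}}$ with $N=\lcm(n,p^{f}-1)$, the element $\zeta_n^{cl}$ has order $\tfrac{n}{\gcd(n,cl)}$ and $\GEN{\zeta_{p^{f}-1}^{\,e}}$ is the unique subgroup of order $\tfrac{p^{f}-1}{\gcd(p^{f}-1,e)}$, so the former lies in the latter if and only if its order divides the order of the subgroup, which is precisely the congruence $\tfrac{p^{f}-1}{\gcd(p^{f}-1,e)}\equiv 0\pmod{\tfrac{n}{\gcd(n,cl)}}$. The step I expect to be the real obstacle is the norm computation for the totally ramified cyclotomic extension $K(\zeta_{p^{a}})/K$: one must keep the tame part and the (possibly nontrivial) wild part of $e$ separate and, in particular, track how a twist $\zeta_n^c$ of order divisible by $p$ enters the formula — this is exactly the bookkeeping that \cite[Lemma~3.1]{1975Janusz} performs, which is why I would invoke it rather than reprove it.
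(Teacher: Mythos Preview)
The paper does not give a proof of this lemma at all: it is stated with the attribution \cite[Lemma~3.1]{1975Janusz} and the remark ``The following lemma follows from Janusz'', and is then used as a black box. Your proposal is therefore not competing with a proof in the paper --- it is an elaboration of exactly the reduction the paper leaves implicit: localise (using \cite{Benard}), identify the local index with the order of $\zeta_n^{c}$ modulo local norms, split the norm condition along the unramified and totally ramified subextensions, and invoke Janusz for the totally ramified piece. That outline is correct, and the final paragraph verifying the equivalence of the two displayed descriptions via orders in the cyclic group $\GEN{\zeta_{\lcm(n,\,p^{f}-1)}}$ is the only thing the paper could be said to add on top of Janusz; you supply it cleanly.

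Two small points worth tightening if you keep this sketch. First, the assertion that $K(\zeta_{p^{a}})/K$ is totally ramified of degree $e$ deserves one line of justification: $\Q_p(\zeta_{p^{a}})/\Q_p$ is totally ramified, hence so is $K(\zeta_{p^{a}})/K$; since $K(\zeta_{m_0})/K$ is unramified and the compositum is $K(\zeta_n)$, the degree of the totally ramified piece equals $e_p(F(\zeta_n)/F)$. Second, the claim that the twist is ``unchanged'' under localisation is the standard fact that for cyclic $(L/F,\sigma,a)$ one has $[A\otimes_F F_{\mathfrak p}]=[(L_{\mathfrak P}/F_{\mathfrak p},\sigma',a)]$ in $\mathrm{Br}(F_{\mathfrak p})$ with $\sigma'$ a generator of the decomposition group and the \emph{same} $a$; this is true, but since it is the step that makes the whole computation go through, a reference (e.g.\ \cite[\S31]{Reiner1975}) would not be out of place. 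Neither issue is a gap --- your proposal matches the paper's approach, only with the reduction spelled out.
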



The computation of local Schur indices is made available in the \texttt{GAP}-package \texttt{wedderga} by Herman \cite{2014Herman,Wedderga}. 


\section{Group algebras with exceptional components of type 2}\label{section:type2}

We give a full list of finite groups $G$ and number fields $F$ having faithful exceptional components of type 2 in $FG$. Employing \cref{subgroup} one can deduce from this list all exceptional components of type 2 within group algebras over number fields.

\begin{theorem}\label{classification_matrices}
Let $F$ be a number field, $G$ be a finite group and $B$ a simple exceptional algebra of type 2.
Then $B$ is a faithful Wedderburn component of $FG$ if and only if $G$, $F$, $B$ is a row listed in \cref{exceptional_table}.
\end{theorem}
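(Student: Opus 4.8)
The strategy is a finite computation, reducing everything to already-established classifications. First I would invoke \cref{subgroup}: if $B = M_2(D)$ is a faithful exceptional Wedderburn component of $FG$, then $Ge \cong G$ is a finite subgroup of $\U(B) = \GL_2(D)$ whose $F$-span is all of $B = M_2(D)$. Conversely, if $G \leq \GL_2(D)$ spans $M_2(D)$ over $F$ and $B$ is simple, then $B$ is a Wedderburn component of $FG$ (and faithful), so the two sides of the desired equivalence match up precisely with the condition ``$G$ embeds in $\GL_2(D)$ spanning $M_2(D)$ over $F$, where $M_2(D)$ is a simple component of $FG$''. By \cref{florian1} and \cref{florian3}, $D$ is forced to be one of $\Q$, $\Q(\sqrt{-1})$, $\Q(\sqrt{-2})$, $\Q(\sqrt{-3})$, $\quat{-1,-1}{\Q}$, $\quat{-1,-3}{\Q}$, $\quat{-2,-5}{\Q}$, so there are only seven cases for $D$ to handle.

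Next I would split according to whether $D$ is a field or a quaternion algebra. When $D$ is $\Q$ or a quadratic imaginary field, \cref{florian2} places $G$ inside the finite group $\GL(2,25)$; one then enumerates (e.g.\ with \texttt{GAP}) the subgroups of $\GL(2,25)$ up to conjugacy, and for each candidate $G$ determines, via \cref{main} (the Olivieri--del R\'io--Sim\'on / Olteanu--Van Gelder description of Wedderburn components), for which number fields $F$ the algebra $M_2(D)$ actually appears faithfully in $FG$ — this also pins down the precise $F$ in each row (note $F$ need not equal the center $\Q$ or the quadratic field; it can be any number field over which the component stays $M_2(D)$, which constrains $F$ through the Schur index computations of \cref{lcm}, \cref{olteanu}, \cref{splitting}). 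When $D$ is a totally definite quaternion algebra with center $\Q$, $G$ is found in the classifications of Banieqbal \cite{Banieqbal} and Nebe \cite{NebeQuat}: I would read off from those lists the finite groups generating $M_2(D)$ over $\Q$ for the three relevant $D$, and again use \cref{main} together with \cref{splitting} (and \cref{Shirvani-2.1.9}) to decide over which number fields $F$ the component $M_2(D)$ survives as a faithful simple component. Assembling all rows gives \cref{exceptional_table}, and counting them yields the claimed $58$ groups.

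The main obstacle is bookkeeping rather than conceptual: one must make sure the enumeration of subgroups of $\GL(2,25)$ (and the extraction from Banieqbal's and Nebe's lists) is exhaustive and non-redundant, and — more delicately — that for each group one has correctly determined \emph{all} number fields $F$ for which $M_2(D)$ is a faithful component, not merely the minimal one. This last point requires, for each candidate component $FGe$, computing its degree and Schur index over the prospective center as a function of $F$; the degree-$2$ matrix structure forces $F(\zeta_k)/F$-degrees and local indices to take specific values, and the reciprocity rules of \cref{reciprocity} together with \cref{olteanu} and \cref{hermanodd}/\cref{hermaninfty} make these computations effective. Verifying that no further, larger $F$ (e.g.\ a totally real extension, in the quaternion case) preserves the exceptionality is where most of the care is needed. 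Once these per-group analyses are done, the theorem follows by direct inspection of the resulting table.
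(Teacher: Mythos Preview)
Your plan is essentially the paper's own proof: invoke \cref{subgroup}, restrict $D$ to the seven algebras via \cref{florian1} and \cref{florian3}, enumerate candidate $G$'s inside $\GL(2,25)$ (field case, \cref{florian2}) or via Nebe's Magma database (quaternion case), and then check each pair computationally.

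One simplification you are missing makes the ``which $F$'' step trivial rather than delicate: since $B$ is a simple component of the $F$-algebra $FG$, the field $F$ necessarily embeds in the center of $B$, and for each of the seven possible $B$'s that center is $\Q$ or one of $\Q(\sqrt{-1}),\Q(\sqrt{-2}),\Q(\sqrt{-3})$. Hence for every fixed $B$ there are at most two candidates for $F$, and in the quaternion cases only $F=\Q$; there is no ``larger $F$'' to rule out and no Schur-index analysis of the kind you describe is needed. The paper records this in one line and then simply runs \texttt{wedderga}'s \texttt{WedderburnDecompositionWithDivAlgParts} over the finitely many pairs $(G,F)$ to produce \cref{exceptional_table}.
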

\begin{proof}
Let $B$ be a simple exceptional algebra of type 2 and assume that $B$ is a faithful Wedderburn component of $FG$, then by \cref{subgroup}, $G$ is a subgroup of $\U(B)$ and $B$ is isomorphic to an algebra stated in \cref{florian1,florian3}.

The subgroups of $M_2(\Q),M_2(\Q(\sqrt{-1})),M_2(\Q(\sqrt{-2})),M_2(\Q(\sqrt{-3}))$ are embedded in $\GL(2,25)$ by \cref{florian2}.
The maximal finite subgroups of $2\times 2$-matrices over totally definite quaternion algebras with center $\Q$ were classified in
\cite{NebeQuat}. They can be accessed in \texttt{Magma} \cite{magma} by calling \texttt{QuaternionicMatrixGroupDatabase()}.

It is also clear that when $FG$ has a Wedderburn component $B$ then $F$ is contained in the center of $B$, which restricts the possibilities of $F$ for $G$ and $B$ fixed.

Additionally, using the \texttt{GAP}-package \texttt{wedderga}, one can compute a finite list of groups $G$ that have $B$ as a faithful component over $F$. We mainly use the function \texttt{WedderburnDecomposition\-WithDivAlgParts} which returns the size of the matrices, the centers and the local indices of all Wedderburn components of a group algebra and allows us to compare the Wedderburn components to the possibilities of $B$ above. This is possible, since $F$ is a number field and the isomorphism type of division algebras is determined by its list of local Hasse invariants at all primes of $F$.
For quaternion algebras the local Hasse invariants are uniquely determined by the local Schur indices. \end{proof}

\begin{notation}[in \cref{exceptional_table}]
We use the \texttt{GAP} notation for the group structure.
Direct products are denoted by $A\times B$ and semidirect products by $A\rtimes B$.
For a non-split extension of $A$ by $B$, we write $A.B$. If an exceptional component appears several times in $FG$, this multiplicity is indicated in the last column.
\end{notation}

{\small 
\begin{longtable}{@{}llp{2cm}l@{}} \caption{List of all groups having a faithful exceptional component of type 2} \label{exceptional_table} \\ \toprule[1.5pt]
\textsc{SmallGroup} ID & Structure & $F$ & $B$  \\ \midrule 
\endfirsthead \toprule[1.5pt] \textsc{SmallGroup} ID & Structure & $F$ & $B$  \\ \midrule 
\endhead \hline \multicolumn{4}{c}{continued}\\ \midrule[1.5pt]\endfoot\bottomrule[1.5pt]\endlastfoot
{[}6, 1{]} & $ S_3 $ & $ \mathbb Q $& $1 \times $ $ M_2\left( \mathbb Q \right)$ \\
{[}6, 1{]} & $ S_3 $ & $ \mathbb Q (\sqrt{-1}) $& $1 \times $ $ M_2\left( \mathbb Q (\sqrt{-1}) \right)$ \\
{[}6, 1{]} & $ S_3 $ & $ \mathbb Q (\sqrt{-2}) $& $1 \times $ $ M_2\left( \mathbb Q (\sqrt{-2}) \right)$ \\
{[}6, 1{]} & $ S_3 $ & $ \mathbb Q (\sqrt{-3}) $& $1 \times $ $ M_2\left( \mathbb Q (\sqrt{-3}) \right)$ \\
{[}8, 3{]} & $ D_8 $ & $ \mathbb Q $& $1 \times $ $ M_2\left( \mathbb Q \right)$ \\
{[}8, 3{]} & $ D_8 $ & $ \mathbb Q (\sqrt{-1}) $& $1 \times $ $ M_2\left( \mathbb Q (\sqrt{-1}) \right)$ \\
{[}8, 3{]} & $ D_8 $ & $ \mathbb Q (\sqrt{-2}) $& $1 \times $ $ M_2\left( \mathbb Q (\sqrt{-2}) \right)$ \\
{[}8, 3{]} & $ D_8 $ & $ \mathbb Q (\sqrt{-3}) $& $1 \times $ $ M_2\left( \mathbb Q (\sqrt{-3}) \right)$ \\
{[}8, 4{]} & $ Q_8 $ & $ \mathbb Q (\sqrt{-1}) $& $1 \times $ $ M_2\left( \mathbb Q (\sqrt{-1}) \right)$ \\
{[}8, 4{]} & $ Q_8 $ & $ \mathbb Q (\sqrt{-2}) $& $1 \times $ $ M_2\left( \mathbb Q (\sqrt{-2}) \right)$ \\
{[}8, 4{]} & $ Q_8 $ & $ \mathbb Q (\sqrt{-3}) $& $1 \times $ $ M_2\left( \mathbb Q (\sqrt{-3}) \right)$ \\
{[}12, 1{]} & $ C_3  \rtimes   C_4$ & $ \mathbb Q (\sqrt{-1}) $& $1 \times $ $ M_2\left( \mathbb Q (\sqrt{-1}) \right)$ \\
{[}12, 1{]} & $ C_3  \rtimes   C_4$ & $ \mathbb Q (\sqrt{-3}) $& $1 \times $ $ M_2\left( \mathbb Q (\sqrt{-3}) \right)$ \\
{[}12, 4{]} & $ D_{12} $ & $ \mathbb Q $& $1 \times $ $ M_2\left( \mathbb Q \right)$ \\
{[}12, 4{]} & $ D_{12} $ & $ \mathbb Q (\sqrt{-1}) $& $1 \times $ $ M_2\left( \mathbb Q (\sqrt{-1}) \right)$ \\
{[}12, 4{]} & $ D_{12} $ & $ \mathbb Q (\sqrt{-2}) $& $1 \times $ $ M_2\left( \mathbb Q (\sqrt{-2}) \right)$ \\
{[}12, 4{]} & $ D_{12} $ & $ \mathbb Q (\sqrt{-3}) $& $1 \times $ $ M_2\left( \mathbb Q (\sqrt{-3}) \right)$ \\
{[}16, 6{]} & $ C_8  \rtimes   C_2$ & $ \mathbb Q $& $1 \times $ $ M_2\left( \mathbb Q (\sqrt{-1}) \right)$ \\
{[}16, 6{]} & $ C_8  \rtimes   C_2$ & $ \mathbb Q (\sqrt{-1}) $& $2 \times $ $ M_2\left( \mathbb Q (\sqrt{-1}) \right)$ \\
{[}16, 8{]} & $ {QD}_{16} \textnormal{ (also denoted by } D_{16}^{-} \textnormal{)} $ & $ \mathbb Q $& $1 \times $ $ M_2\left( \mathbb Q (\sqrt{-2}) \right)$ \\
{[}16, 8{]} & $ {QD}_{16} \textnormal{ (also denoted by } D_{16}^{-} \textnormal{)} $ & $ \mathbb Q (\sqrt{-2}) $& $2 \times $ $ M_2\left( \mathbb Q (\sqrt{-2}) \right)$ \\
{[}16, 13{]} & $( C_4  \times   C_2)  \rtimes   C_2$ & $ \mathbb Q $& $1 \times $ $ M_2\left( \mathbb Q (\sqrt{-1}) \right)$ \\
{[}16, 13{]} & $( C_4  \times   C_2)  \rtimes   C_2$ & $ \mathbb Q (\sqrt{-1}) $& $2 \times $ $ M_2\left( \mathbb Q (\sqrt{-1}) \right)$ \\
{[}18, 3{]} & $ C_3  \times   S_3 $ & $ \mathbb Q $& $1 \times $ $ M_2\left( \mathbb Q (\sqrt{-3}) \right)$ \\
{[}18, 3{]} & $ C_3  \times   S_3 $ & $ \mathbb Q (\sqrt{-3}) $& $2 \times $ $ M_2\left( \mathbb Q (\sqrt{-3}) \right)$ \\
{[}24, 1{]} & $ C_3  \rtimes   C_8$ & $ \mathbb Q $& $1 \times $ $ M_2\left( \mathbb Q (\sqrt{-1}) \right)$ \\
{[}24, 1{]} & $ C_3  \rtimes   C_8$ & $ \mathbb Q (\sqrt{-1}) $& $2 \times $ $ M_2\left( \mathbb Q (\sqrt{-1}) \right)$ \\
{[}24, 3{]} & $ {\rm SL} (2,3)$ & $ \mathbb Q $& $1 \times $ $ M_2\left( \mathbb Q (\sqrt{-3}) \right)$ \\
{[}24, 3{]} & $ {\rm SL} (2,3)$ & $ \mathbb Q (\sqrt{-1}) $& $1 \times $ $ M_2\left( \mathbb Q (\sqrt{-1}) \right)$ \\
{[}24, 3{]} & $ {\rm SL} (2,3)$ & $ \mathbb Q (\sqrt{-2}) $& $1 \times $ $ M_2\left( \mathbb Q (\sqrt{-2}) \right)$ \\
{[}24, 3{]} & $ {\rm SL} (2,3)$ & $ \mathbb Q (\sqrt{-3}) $& $3 \times $ $ M_2\left( \mathbb Q (\sqrt{-3}) \right)$ \\
{[}24, 5{]} & $ C_4  \times   S_3 $ & $ \mathbb Q $& $1 \times $ $ M_2\left( \mathbb Q (\sqrt{-1}) \right)$ \\
{[}24, 5{]} & $ C_4  \times   S_3 $ & $ \mathbb Q (\sqrt{-1}) $& $2 \times $ $ M_2\left( \mathbb Q (\sqrt{-1}) \right)$ \\
{[}24, 8{]} & $( C_6  \times   C_2)  \rtimes   C_2$ & $ \mathbb Q $& $1 \times $ $ M_2\left( \mathbb Q (\sqrt{-3}) \right)$ \\
{[}24, 8{]} & $( C_6  \times   C_2)  \rtimes   C_2$ & $ \mathbb Q (\sqrt{-3}) $& $2 \times $ $ M_2\left( \mathbb Q (\sqrt{-3}) \right)$ \\
{[}24, 10{]} & $ C_3  \times   D_8 $ & $ \mathbb Q $& $1 \times $ $ M_2\left( \mathbb Q (\sqrt{-3}) \right)$ \\
{[}24, 10{]} & $ C_3  \times   D_8 $ & $ \mathbb Q (\sqrt{-3}) $& $2 \times $ $ M_2\left( \mathbb Q (\sqrt{-3}) \right)$ \\
{[}24, 11{]} & $ C_3  \times   Q_8 $ & $ \mathbb Q $& $1 \times $ $ M_2\left( \mathbb Q (\sqrt{-3}) \right)$ \\
{[}24, 11{]} & $ C_3  \times   Q_8 $ & $ \mathbb Q (\sqrt{-3}) $& $2 \times $ $ M_2\left( \mathbb Q (\sqrt{-3}) \right)$ \\
{[}32, 8{]} & $( C_2  \times   C_2) . ( C_4  \times   C_2)$ & $\mathbb Q$& $1 \times $ $ M_2\left(\frac{-1,-1}{\mathbb Q}\right)$ \\
{[}32, 11{]} & $( C_4  \times   C_4)  \rtimes   C_2$ & $ \mathbb Q $& $2 \times $ $ M_2\left( \mathbb Q (\sqrt{-1}) \right)$ \\
{[}32, 11{]} & $( C_4  \times   C_4)  \rtimes   C_2$ & $ \mathbb Q (\sqrt{-1}) $& $4 \times $ $ M_2\left( \mathbb Q (\sqrt{-1}) \right)$ \\
{[}32, 44{]} & $( C_2  \times   Q_8 )  \rtimes   C_2$ & $\mathbb Q$& $1 \times $ $ M_2\left(\frac{-1,-1}{\mathbb Q}\right)$ \\
{[}32, 50{]} & $( C_2  \times   Q_8 )  \rtimes   C_2$ & $\mathbb Q$& $1 \times $ $ M_2\left(\frac{-1,-1}{\mathbb Q}\right)$ \\
{[}36, 6{]} & $ C_3  \times  ( C_3  \rtimes   C_4)$ & $ \mathbb Q $& $1 \times $ $ M_2\left( \mathbb Q (\sqrt{-3}) \right)$ \\
{[}36, 6{]} & $ C_3  \times  ( C_3  \rtimes   C_4)$ & $ \mathbb Q (\sqrt{-3}) $& $2 \times $ $ M_2\left( \mathbb Q (\sqrt{-3}) \right)$ \\
{[}36, 12{]} & $ C_6  \times   S_3 $ & $ \mathbb Q $& $1 \times $ $ M_2\left( \mathbb Q (\sqrt{-3}) \right)$ \\
{[}36, 12{]} & $ C_6  \times   S_3 $ & $ \mathbb Q (\sqrt{-3}) $& $2 \times $ $ M_2\left( \mathbb Q (\sqrt{-3}) \right)$ \\
{[}40, 3{]} & $ C_5  \rtimes   C_8$ & $\mathbb Q$& $1 \times $ $ M_2\left(\frac{-2,-5}{\mathbb Q}\right)$ \\
{[}48, 16{]} & $( C_3  \rtimes   C_8)  \rtimes   C_2$ & $\mathbb Q$& $1 \times $ $ M_2\left(\frac{-1,-1}{\mathbb Q}\right)$ \\
{[}48, 18{]} & $ C_3  \rtimes   Q_{16} $ & $\mathbb Q$& $1 \times $ $ M_2\left(\frac{-1,-3}{\mathbb Q}\right)$ \\
{[}48, 28{]} & $ C_2 .  S_4  =  {\rm SL} (2,3) .  C_2 = \mathcal{O}^*$ & $\mathbb Q$& $1 \times $ $ M_2\left(\frac{-1,-3}{\mathbb Q}\right)$ \\
{[}48, 29{]} & $ {\rm GL} (2,3)$ & $ \mathbb Q $& $1 \times $ $ M_2\left( \mathbb Q (\sqrt{-2}) \right)$ \\
{[}48, 29{]} & $ {\rm GL} (2,3)$ & $ \mathbb Q (\sqrt{-2}) $& $2 \times $ $ M_2\left( \mathbb Q (\sqrt{-2}) \right)$ \\
{[}48, 33{]} & $ {\rm SL} (2,3)  \rtimes   C_2$ & $ \mathbb Q $& $1 \times $ $ M_2\left( \mathbb Q (\sqrt{-1}) \right)$ \\
{[}48, 33{]} & $ {\rm SL} (2,3)  \rtimes   C_2$ & $ \mathbb Q (\sqrt{-1}) $& $2 \times $ $ M_2\left( \mathbb Q (\sqrt{-1}) \right)$ \\
{[}48, 39{]} & $( C_2  \times  ( C_3  \rtimes   C_4))  \rtimes   C_2$ & $\mathbb Q$& $1 \times $ $ M_2\left(\frac{-1,-3}{\mathbb Q}\right)$ \\
{[}48, 40{]} & $ Q_8   \times   S_3 $ & $\mathbb Q$& $1 \times $ $ M_2\left(\frac{-1,-1}{\mathbb Q}\right)$ \\
{[}64, 37{]} & $( C_4  \times   C_2) . ( C_4  \times   C_2)$ & $\mathbb Q$& $2 \times $ $ M_2\left(\frac{-1,-1}{\mathbb Q}\right)$ \\
{[}64, 137{]} & $(( C_4  \times   C_4)  \rtimes   C_2)  \rtimes   C_2$ & $\mathbb Q$& $2 \times $ $ M_2\left(\frac{-1,-1}{\mathbb Q}\right)$ \\
{[}72, 19{]} & $( C_3  \times   C_3)  \rtimes   C_8$ & $\mathbb Q$& $2 \times $ $ M_2\left(\frac{-1,-3}{\mathbb Q}\right)$ \\
{[}72, 20{]} & $( C_3  \rtimes   C_4)  \times   S_3 $ & $\mathbb Q$& $1 \times $ $ M_2\left(\frac{-1,-3}{\mathbb Q}\right)$ \\
{[}72, 22{]} & $( C_6  \times   S_3 )  \rtimes   C_2$ & $\mathbb Q$& $1 \times $ $ M_2\left(\frac{-1,-3}{\mathbb Q}\right)$ \\
{[}72, 24{]} & $( C_3  \times   C_3)  \rtimes   Q_8 $ & $\mathbb Q$& $1 \times $ $ M_2\left(\frac{-1,-3}{\mathbb Q}\right)$ \\
{[}72, 25{]} & $ C_3  \times   {\rm SL} (2,3)$ & $ \mathbb Q $& $3 \times $ $ M_2\left( \mathbb Q (\sqrt{-3}) \right)$ \\
{[}72, 25{]} & $ C_3  \times   {\rm SL} (2,3)$ & $ \mathbb Q (\sqrt{-3}) $& $6 \times $ $ M_2\left( \mathbb Q (\sqrt{-3}) \right)$ \\
{[}72, 30{]} & $ C_3  \times  (( C_6  \times   C_2)  \rtimes   C_2)$ & $ \mathbb Q $& $2 \times $ $ M_2\left( \mathbb Q (\sqrt{-3}) \right)$ \\
{[}72, 30{]} & $ C_3  \times  (( C_6  \times   C_2)  \rtimes   C_2)$ & $ \mathbb Q (\sqrt{-3}) $& $4 \times $ $ M_2\left( \mathbb Q (\sqrt{-3}) \right)$ \\
{[}96, 67{]} & $ {\rm SL} (2,3)  \rtimes   C_4$ & $ \mathbb Q $& $2 \times $ $ M_2\left( \mathbb Q (\sqrt{-1}) \right)$ \\
{[}96, 67{]} & $ {\rm SL} (2,3)  \rtimes   C_4$ & $ \mathbb Q (\sqrt{-1}) $& $4 \times $ $ M_2\left( \mathbb Q (\sqrt{-1}) \right)$ \\
{[}96, 190{]} & $( C_2  \times   {\rm SL} (2,3))  \rtimes   C_2$ & $\mathbb Q$& $1 \times $ $ M_2\left(\frac{-1,-1}{\mathbb Q}\right)$ \\
{[}96, 191{]} & $( C_2 .  S_4  =  {\rm SL} (2,3) .  C_2 = \mathcal{O}^*)  \rtimes   C_2$ & $\mathbb Q$& $1 \times $ $ M_2\left(\frac{-1,-1}{\mathbb Q}\right)$ \\
{[}96, 202{]} & $( C_2  \times   {\rm SL} (2,3))  \rtimes   C_2$ & $\mathbb Q$& $1 \times $ $ M_2\left(\frac{-1,-1}{\mathbb Q}\right)$ \\
{[}120, 5{]} & $ {\rm SL} (2,5)$ & $\mathbb Q$& $1 \times $ $ M_2\left(\frac{-1,-3}{\mathbb Q}\right)$ \\
{[}128, 937{]} & $( Q_8   \times   Q_8 )  \rtimes   C_2$ & $\mathbb Q$& $4 \times $ $ M_2\left(\frac{-1,-1}{\mathbb Q}\right)$ \\
{[}144, 124{]} & $ C_3  \rtimes  ( C_2 .  S_4  =  {\rm SL} (2,3) .  C_2 = \mathcal{O}^*)$ & $\mathbb Q$& $3 \times $ $ M_2\left(\frac{-1,-3}{\mathbb Q}\right)$ \\
{[}144, 128{]} & $ S_3   \times   {\rm SL} (2,3)$ & $\mathbb Q$& $1 \times $ $ M_2\left(\frac{-1,-1}{\mathbb Q}\right)$ \\
{[}144, 135{]} & $(( C_3  \times   C_3)  \rtimes   C_8)  \rtimes   C_2$ & $\mathbb Q$& $4 \times $ $ M_2\left(\frac{-1,-3}{\mathbb Q}\right)$ \\
{[}144, 148{]} & $( C_2  \times  (( C_3  \times   C_3)  \rtimes   C_4))  \rtimes   C_2$ & $\mathbb Q$& $2 \times $ $ M_2\left(\frac{-1,-3}{\mathbb Q}\right)$ \\
{[}160, 199{]} & $(( C_2  \times   Q_8 )  \rtimes   C_2)  \rtimes   C_5$ & $\mathbb Q$& $1 \times $ $ M_2\left(\frac{-1,-1}{\mathbb Q}\right)$ \\
{[}192, 989{]} & $(( C_2 .  S_4  =  {\rm SL} (2,3) .  C_2 = \mathcal{O}^*)  \rtimes   C_2)  \rtimes   C_2$ & $\mathbb Q$& $2 \times $ $ M_2\left(\frac{-1,-1}{\mathbb Q}\right)$ \\
{[}240, 89{]} & $ C_2 .  S_5  =  {\rm SL} (2,5) .  C_2$ & $\mathbb Q$& $1 \times $ $ M_2\left(\frac{-2,-5}{\mathbb Q}\right)$ \\
{[}240, 90{]} & $ {\rm SL} (2,5)  \rtimes   C_2$ & $\mathbb Q$& $1 \times $ $ M_2\left(\frac{-2,-5}{\mathbb Q}\right)$ \\
{[}288, 389{]} & $(( C_3  \rtimes   C_4)  \times  ( C_3  \rtimes   C_4))  \rtimes   C_2$ & $\mathbb Q$& $2 \times $ $ M_2\left(\frac{-1,-3}{\mathbb Q}\right)$ \\
{[}320, 1581{]} & $((( C_2  \times   Q_8 )  \rtimes   C_2)  \rtimes   C_5) .  C_2$ & $\mathbb Q$& $2 \times $ $ M_2\left(\frac{-1,-1}{\mathbb Q}\right)$ \\
{[}384, 618{]} & $(( Q_8   \times   Q_8 )  \rtimes   C_3)  \rtimes   C_2$ & $\mathbb Q$& $1 \times $ $ M_2\left(\frac{-1,-1}{\mathbb Q}\right)$ \\
{[}384, 18130{]} & $(( Q_8   \times   Q_8 )  \rtimes   C_3)  \rtimes   C_2$ & $\mathbb Q$& $1 \times $ $ M_2\left(\frac{-1,-1}{\mathbb Q}\right)$ \\
{[}720, 409{]} & $ {\rm SL} (2,9)$ & $\mathbb Q$& $2 \times $ $ M_2\left(\frac{-1,-3}{\mathbb Q}\right)$ \\
{[}1152, 155468{]} & $( {\rm SL} (2,3)  \times   {\rm SL} (2,3))  \rtimes   C_2$ & $\mathbb Q$& $1 \times $ $ M_2\left(\frac{-1,-1}{\mathbb Q}\right)$ \\
{[}1920, 241003{]} & $ C_2 . (( C_2  \times   C_2  \times   C_2  \times   C_2)  \rtimes   A_5 )$ & $\mathbb Q$& $1 \times $ $ M_2\left(\frac{-1,-1}{\mathbb Q}\right)$ \\
\end{longtable}}

\section{Group algebras with exceptional components of type 1}\label{section:type1}

In this section we consider exceptional components of type 1. We provide necessary and sufficient conditions for a finite group $G$ to be $F$-critical. 

\begin{definition}
Let $G$ be a finite group and $F$ an abelian number field. We say that $G$ is $F$-\emph{critical} if and only if 
\begin{enumerate}
 \item $FG$ has a Wedderburn component which is exceptional of type 1, and
 \item for any $1\neq N \unlhd G$ the group algebra $F(G/N)$ does not have a Wedderburn component which is exceptional of type 1.
\end{enumerate}
\end{definition}
%

Note that if a group $G$ is $F$-critical with corresponding exceptional component $B$, the $F$-representation of $G$ associated to $B$ is necessarily faithful. In particular $G$ is in the classification of Amitsur (cf.\ \cref{SubgruposAD}).

We consider the NZ-groups from Amitsur's classification in \cref{O,quaternion,SL23}.

\begin{proposition}\label{O}
 Let $F$ be any abelian number field. Then neither $\OO^*$, the binary octahedral group, nor $\SL(2,5)$ is $F$-critical.
\end{proposition}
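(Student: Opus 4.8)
The plan is to show that the smallest non-central quotients already force an exceptional component of type 2 or a totally definite quaternion algebra, so that neither $\OO^*$ nor $\SL(2,5)$ can be minimal with respect to having an exceptional component of type 1. The key point is that \emph{any} exceptional component of type 1 must be a faithful representation of the group (by the remark preceding the proposition, combined with \cref{SubgruposAD}), so it suffices to exhibit, for each of $\OO^*$ and $\SL(2,5)$, a proper nontrivial quotient that itself has an exceptional component of type 1, or else to rule out type 1 components entirely for the group in question.

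First I would treat $\SL(2,5)$. This group is perfect and quasisimple, with center $C_2$ and $\SL(2,5)/Z \iso A_5$. So its only proper nontrivial quotient is $A_5$, which is simple non-abelian; the rational group algebra $\Q A_5$ has Wedderburn components $\Q$, $M_3(\Q)$ (twice, or $M_3(\Q(\sqrt 5))$ over $\Q$), $M_4(\Q)$, $M_5(\Q)$ — none of which is a non-commutative division algebra, and over any abelian number field $F$ the components of $FA_5$ remain matrix rings over abelian (in fact at worst quadratic) number fields, hence never type 1. Thus $A_5$ is not $F$-critical for any abelian $F$. Therefore, for $\SL(2,5)$ to fail to be $F$-critical it is enough to observe that $F\,\SL(2,5)$ has a type 1 component if and only if one of its faithful components is a non-commutative division algebra other than a totally definite quaternion algebra; but the faithful simple components of $\Q\,\SL(2,5)$ are $M_2(D)$ with $D = \quat{-1,-3}{\Q}$ (row $[120,5]$ of \cref{exceptional_table}) and $M_2(\Q(\sqrt 5))$-type pieces, i.e. $2\times 2$ matrix rings — never division rings. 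Base-changing to an abelian $F$ only splits or enlarges centers; a $2\times 2$ matrix ring stays a $2\times 2$ matrix ring (possibly becoming a full matrix ring of larger size, never shrinking to a division ring). Hence $F\,\SL(2,5)$ never has an exceptional component of type 1, so $\SL(2,5)$ is not $F$-critical.

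For $\OO^*$, the binary octahedral group of order $48$, the argument is analogous. Its faithful irreducible representations have degree $2$, so its faithful simple components of $\Q\OO^*$ are of the form $M_2(D)$ (indeed $M_2\quat{-1,-3}{\Q}$, row $[48,28]$ of \cref{exceptional_table}), again a $2\times 2$ matrix ring; over any abelian $F$ it remains a $2\times 2$-matrix ring over a quaternion algebra or a matrix ring over a field, never a non-commutative division algebra of type 1. One must also check the non-faithful components, but these factor through proper quotients of $\OO^*$ (which are among $C_2$, $S_3$, $S_4$, $C_2\times S_3$, etc.), and for a type 1 component to appear in $F(G/N)$ we could pass to that smaller group; concretely, every proper quotient of $\OO^*$ is solvable of order dividing $24$ with all irreducible $F$-components being matrix rings over abelian fields or (at worst) a quaternion algebra appearing in degree-$1$ position only if that quaternion algebra is totally definite — one checks these are $\quat{-1,-1}{\Q}$-type, which is totally definite and hence \emph{not} type 1. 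So no proper quotient, and not $\OO^*$ itself, has a type 1 exceptional component.

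The main obstacle is the bookkeeping for $\OO^*$: one must be sure that \emph{no} simple component of $F\OO^*$ — faithful or not, over \emph{any} abelian $F$ — is a non-commutative division algebra distinct from a totally definite quaternion algebra. I expect to handle this by listing the rational Wedderburn components of $\Q\OO^*$ explicitly (using the strong Shoda pair machinery of \cref{main} or a direct character-table computation), noting each is either commutative, a matrix ring over $\Q$ or $\Q(\sqrt{-2})$, or the quaternion algebra $\quat{-1,-1}{\Q}$ (totally definite), and the unique $2\times 2$ exceptional piece $M_2\quat{-1,-3}{\Q}$; then invoking \cref{olteanu} and \cref{splitting} to see that scalar extension to an abelian $F$ turns each division component into a matrix ring or preserves its total-definiteness, and turns each matrix component into a (possibly larger) matrix ring — in no case producing a type 1 division algebra. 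Combined with the trivial observation that $\SL(2,5)$ has only the quotient $A_5$ (handled above), this completes the proof.
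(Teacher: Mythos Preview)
Your argument has a genuine gap: you misidentify the faithful Wedderburn components of $\Q\SL(2,5)$ and $\Q\OO^*$. For $\SL(2,5)$, the two faithful degree-$2$ characters have character field $\Q(\sqrt{5})$ and Schur index $2$, so the corresponding rational simple component is the \emph{division algebra} $\quat{-1,-1}{\Q(\sqrt{5})}$, not an $M_2(\Q(\sqrt{5}))$-type matrix ring as you assert. (Concretely, $\SL(2,5)$ is the binary icosahedral group and sits in the unit group of a quaternion algebra, not in $\GL_2$ of any totally real field.) Thus your claim that ``$F\,\SL(2,5)$ never has an exceptional component of type 1'' because all faithful components are already matrix rings is unsupported: there \emph{is} a division-algebra component, and one must argue that after base change to an abelian number field $F$ it cannot become a non-commutative division ring other than a totally definite quaternion algebra. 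The same issue arises for $\OO^*$: your statement ``its faithful irreducible representations have degree $2$, so its faithful simple components of $\Q\OO^*$ are of the form $M_2(D)$'' is doubly wrong---there is also a faithful degree-$4$ character, and a degree-$2$ character with Schur index $2$ gives a quaternion division algebra, not $M_2(D)$.

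The paper's proof handles exactly the point you miss. It writes down the full Wedderburn decomposition of $\Q\SL(2,5)$, observes that the only component that is a division ring over $\Q$ is $\quat{-1,-1}{\Q(\sqrt{5})}$, and then shows via \cref{splitting} (using that this algebra is ramified only at the infinite primes) that $F\otimes_\Q\quat{-1,-1}{\Q(\sqrt{5})} = \quat{-1,-1}{F(\sqrt{5})}$ is a division algebra only when $F$ is totally real, in which case it is totally definite and hence not exceptional of type 1. Your sketch never engages with this component, and the tool you would need to dispose of it is precisely the local-index computation plus \cref{splitting} that the paper uses. Once you insert that step, the remaining bookkeeping about quotients becomes unnecessary: the conclusion is simply that $F\SL(2,5)$ (and similarly $F\OO^*$) has no exceptional component of type 1 at all.
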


\begin{proof}
 We give a proof for $\SL(2,5)$, the arguments for $\OO^*$ are similar.\\
 The Wedderburn decomposition of $\Q\SL(2,5)$ equals $$ \Q \oplus M_4(\Q) \oplus\quat{-1,-1}{\Q(\sqrt{5})}\oplus M_2\quat{-1,-3}{\Q} \oplus M_5(\Q) \oplus M_3\quat{-1,-1}{\Q}\oplus M_3(\Q(\sqrt{5})).$$ 
The only possible exceptional component of type 1 of $F\SL(2,5)$ comes from \[ F\otimes_{\Q} \quat{-1,-1}{\Q(\sqrt{5})}=\quat{-1,-1}{F(\sqrt{5})}^{[F\cap \Q(\sqrt{5}):\Q]}.\] 
The algebra $\quat{-1,-1}{\Q(\sqrt{5})}$ has all local Schur indices 1, except $m_{\infty}\quat{-1,-1}{\Q(\sqrt{5})}=2$.
By \cref{splitting}, $\quat{-1,-1}{F(\sqrt{5})}$ is therefore only a division algebra when $F$ is totally real. But in this case it is a totally definite quaternion algebra and hence not exceptional.
\end{proof}

\begin{proposition}\label{SL23}
Let $F$ be an abelian number field.
\begin{enumerate}
\item $\SL(2,3)$ is $F$-critical if and only if $F$ is totally imaginary and both $e_2(F/\Q)$ and $f_2(F/\Q)$ are odd. In this case, $F\SL(2,3)$ has an exceptional component $\quat{-1,-1}{F}$.

\item Let $M$ be a group in (Z) of order coprime to $6$, such that $2$ has odd order modulo $|M|$. Then 
$\SL(2,3)\times M$ is $F$-critical if and only if $M$ is a cyclic group of prime order $p$, $F$ is totally real and both $e_2(F(\zeta_p)/\Q)$ and $f_2(F(\zeta_p)/\Q)$ are odd.
In this case, $F(\SL(2,3)\times C_p)$ has an exceptional component $\quat{-1,-1}{F(\zeta_p)}$.
\end{enumerate} 
\end{proposition}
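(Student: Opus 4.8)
The plan is to analyze $\SL(2,3)$ and $\SL(2,3) \times M$ separately, using the known Wedderburn decomposition of $\Q\SL(2,3)$ and the splitting criterion of \cref{splitting}.

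\textbf{Part (1).} First I would recall that the Wedderburn decomposition of $\Q\SL(2,3)$ is
$\Q\SL(2,3) \iso \Q \oplus M_3(\Q) \oplus \quat{-1,-1}{\Q} \oplus M_2(\Q(\zeta_3))$
(the first three coming from $\Q A_4 \iso \Q \oplus \Q(\zeta_3) \oplus M_3(\Q)$ lifted along $\SL(2,3) \twoheadrightarrow A_4$, plus the faithful component $\quat{-1,-1}{\Q}$). The only component that can become exceptional of type 1 over an abelian number field $F$ is $F \otimes_\Q \quat{-1,-1}{\Q} = \quat{-1,-1}{F}^{[F\cap\Q(\sqrt{-1}):\Q]}$, since $M_3(\Q)$ and $M_2(\Q(\zeta_3))$ stay matrix rings of degree $\geq 3$ or a $2\times 2$ matrix ring (type 2, not type 1), and $\Q$ is commutative. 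Now $\quat{-1,-1}{F}$ is a division algebra iff (by \cref{Shirvani-2.1.9}, or \cref{splitting} with $K = \Q$) $F$ is totally real or both $e_2(F/\Q)$ and $f_2(F/\Q)$ are odd. When $F$ is totally real, $\quat{-1,-1}{F}$ is totally definite quaternion, hence \emph{not} exceptional; so for $FG$ to have an exceptional component of type 1 we need $F$ totally imaginary with $e_2(F/\Q)$, $f_2(F/\Q)$ both odd, and then $\quat{-1,-1}{F}$ is a non-commutative division algebra which is not totally definite (no real place), i.e.\ exceptional of type 1. It remains to verify criticality: since $\SL(2,3)$ has a unique minimal normal subgroup, namely its center $Z = C_2$, the only proper quotient to check is $\SL(2,3)/Z \iso A_4$, and $FA_4 \iso F \oplus F(\zeta_3) \otimes_{\Q(\zeta_3)}\Q(\zeta_3) \oplus M_3(F)$ has no division components except fields, hence none exceptional of type 1. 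So $\SL(2,3)$ is $F$-critical precisely under the stated condition, with exceptional component $\quat{-1,-1}{F}$.

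\textbf{Part (2).} For $G = \SL(2,3) \times M$ with $M$ a (Z)-group, $\gcd(|M|,6)=1$ and $o_{|M|}(2)$ odd, I would use $FG \iso FM \otimes_F F\SL(2,3)$ and $FM \iso \bigoplus_i F_i$ a sum of abelian number fields $F_i = F(\zeta_{d_i})^{H_i}$ (actually, since $M$ need not be abelian, $FM$ decomposes as a sum of cyclotomic algebras, but the condition $o_{|M|}(2)$ odd forces the faithful rational components of $\Q M$ that matter to be fields — this needs care). The candidate exceptional type-1 components of $FG$ are then $\quat{-1,-1}{L}$ for $L$ ranging over the simple components of the center of $FM$ (tensoring the $\quat{-1,-1}{\Q}$ factor of $\Q\SL(2,3)$ up to $F$ and then over $FM$). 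For $\quat{-1,-1}{L}$ to be exceptional of type 1 we need, by \cref{splitting} applied with $K = \Q$, $L$ not totally real; but also $G$ must be $F$-critical, meaning no proper quotient of $G$ has such a component. The quotients of $G$ are $\SL(2,3) \times (M/N)$ and $A_4 \times M$ and their further quotients; criticality forces $M$ to have no proper nontrivial quotient contributing, which (combined with $M$ being a Z-group) pins $M$ down to a cyclic group of prime order $p$, because any larger $M$ would have a proper quotient already giving the exceptional component. With $M = C_p$, $FM \iso F \oplus F(\zeta_p)$, so $FG$ has the extra component $F(\zeta_p) \otimes_F \quat{-1,-1}{F}^{[F\cap\Q(\sqrt{-1}):\Q]} = \quat{-1,-1}{F(\zeta_p)}^{?}$. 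Now here is the twist relative to part (1): the exceptional component is $\quat{-1,-1}{F(\zeta_p)}$, and for $G$ to be $F$-\emph{critical} we need $F\SL(2,3)$ itself (the quotient $G/C_p$) to have \emph{no} exceptional type-1 component, i.e.\ by part (1), $\quat{-1,-1}{F}$ must not be exceptional; combined with $o_p(2)$ odd this forces $F$ to be totally real. Then $\quat{-1,-1}{F(\zeta_p)}$ is a division algebra iff (by \cref{splitting} with $K=\Q$, or noting $m_2\quat{-1,-1}{\Q} \neq 1$ and $m_\infty \neq 1$) either $F(\zeta_p)$ is totally real — impossible since $p > 3$ — or both $e_2(F(\zeta_p)/\Q)$ and $f_2(F(\zeta_p)/\Q)$ are odd; and in that case $F(\zeta_p)$ is totally imaginary so it is exceptional of type 1. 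This gives the claimed equivalence.

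\textbf{Main obstacle.} The delicate point is part (2): showing that criticality \emph{forces} $M$ to be cyclic of prime order, and correctly bookkeeping which components of $FM$ are fields versus noncommutative (using the hypothesis $o_{|M|}(2)$ odd, which controls the Schur indices of the components of $\Q M \otimes \quat{-1,-1}{\Q}$ at the prime $2$). One must argue that if $M$ is not $C_p$ then some proper quotient $M/N$ is nontrivial and $\SL(2,3)\times (M/N)$ already carries an exceptional type-1 component with the same center-type, contradicting minimality; and conversely that for $M = C_p$ the quotient $A_4 \times C_p$ and $\SL(2,3)$ itself carry none under the stated hypotheses. Handling the general (possibly nonabelian) Z-group $M$ and reducing cleanly to the cyclic-of-prime-order case — rather than just to a cyclic case — is where the real work lies; everything else is an application of \cref{splitting}, \cref{Shirvani-2.1.9}, and the explicit decomposition of $\Q\SL(2,3)$.
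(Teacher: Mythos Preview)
Your approach is essentially the paper's: use the explicit Wedderburn decomposition of $\Q\SL(2,3)$, identify $\quat{-1,-1}{F}$ (resp.\ $\quat{-1,-1}{F(\zeta_p)}$) as the only candidate for an exceptional type-1 component, and apply \cref{Shirvani-2.1.9}/\cref{splitting} together with criticality on proper quotients. Part (1) is fine apart from two cosmetic slips: you dropped the $\Q(\zeta_3)$ summand from $\Q\SL(2,3)$, and the exponent $[F\cap\Q(\sqrt{-1}):\Q]$ is spurious since the center of $\quat{-1,-1}{\Q}$ is $\Q$, so $F\otimes_\Q\quat{-1,-1}{\Q}=\quat{-1,-1}{F}$ directly.

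In Part (2) the structure is right but two steps are not correctly justified. First, the reason $F$ must be totally real has nothing to do with $o_p(2)$: since $\SL(2,3)$ is a proper quotient of $G$, criticality forces $\quat{-1,-1}{F}$ to be either totally definite or split; if it \emph{splits}, then every simple component of $F\SL(2,3)$ is a matrix ring over a field, and since $M$ is also a proper quotient (so $FM$ has no non-commutative division components, and none totally definite as $|M|$ is odd), every component of $FG\iso F\SL(2,3)\otimes_F FM$ is a matrix ring over a field, contradicting $F$-criticality. This forces $\quat{-1,-1}{F}$ totally definite, i.e.\ $F$ totally real. You need this argument explicitly; the hypothesis $o_{|M|}(2)$ odd plays no role here. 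Second, for the reduction to $M$ cyclic of prime order: the paper first gets $M$ abelian (hence cyclic, being a Z-group) by observing that the field $F(\zeta_d)$ arising as $D_2$ is already a component of $F(M/M')$, so criticality kills $M'$. Then for $M=C_m$ with $m$ not prime, pick a prime $p\mid m$: since $\quat{-1,-1}{F(\zeta_m)}$ is a division algebra, $e_2(F(\zeta_m)/\Q)$ and $f_2(F(\zeta_m)/\Q)$ are odd, and by \cref{reciprocity} so are $e_2(F(\zeta_p)/\Q)$ and $f_2(F(\zeta_p)/\Q)$; hence $\quat{-1,-1}{F(\zeta_p)}$ is a division algebra with totally imaginary center, i.e.\ exceptional of type 1 in $F(\SL(2,3)\times C_p)$, contradicting criticality. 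Your ``Main obstacle'' paragraph gestures at this but does not supply the reciprocity step that makes it work.
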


\begin{proof}
(1) Let $G=\SL(2,3)$. We first assume that $G$ is $F$-critical. The Wedderburn decomposition of $\Q G$ equals $$\Q G\iso \Q\oplus \Q (\zeta_3)\oplus M_3(\Q)\oplus \quat{-1,-1}{\Q}\oplus M_2(\Q(\zeta_3)).$$ The only exceptional component of $FG$ can be $F\otimes_{\Q}\quat{-1,-1}{\Q}=\quat{-1,-1}{F}$, hence it is an exceptional component of $FG$. Using \cref{Shirvani-2.1.9} the result follows.

Now we prove the converse. By \cref{Shirvani-2.1.9} we have that $\quat{-1,-1}{F}$ is an exceptional component of $FG$ of type 1. On the other hand, $G$ has only one non-abelian proper quotient which is isomorphic to $A_4$, and the Wedderburn decomposition of $\Q A_4$ equals $$\Q A_4\iso \Q\oplus \Q (\zeta_3)\oplus M_3(\Q).$$ Hence the group algebra of any non-abelian proper quotient of $G$ does not have exceptional components. So we conclude that $G$ is $F$-critical.

(2) Let $G=\SL(2,3)\times M$. We first assume that $G$ is $F$-critical. Observe that in the Wedderburn decomposition of $FM$ totally definite quaternion algebras can not appear, since the order of $M$ is odd. Moreover, due to the fact that $M$ is a proper quotient of $G$, $FM$ does not have non-commutative division algebras as simple components.  Another non-abelian quotient of $G$ is $\SL(2,3)$, and according to the Wedderburn decomposition of $\Q \SL(2,3)$, $\quat{-1,-1}{F}$ has to be either a totally definite quaternion algebra or a $2\times 2$-matrix ring over $F$. Suppose that $\quat{-1,-1}{F}$ is a $2\times 2$-matrix ring over $F$. The fact that $FG\iso F\SL(2,3)\otimes_F FM$ implies that there is not any division algebra in the Wedderburn decomposition of $FG$ which is a contradiction. Therefore, $\quat{-1,-1}{F}$ is a totally definite quaternion algebra and $F$ is a totally real field.

On the other hand, let $D$ be an exceptional component of type 1 of $FG$, then $D\iso D_1\otimes_F D_2$ where $D_1$ and $D_2$ are simple components of $F\SL(2,3)$ and $FM$ respectively. Having in mind the Wedderburn decompositions of $F\SL(2,3)$ and $FM$, and since $D$ is a division algebra which is not a totally definite quaternion algebra, we can deduce that $D_1\iso \quat{-1,-1}{F}$ and $D_2\iso F(\zeta_d)$ for some divisor $d$ of the order of $M$, $d\neq 1$. We know that $F(\zeta_d)$ is a simple component of $F(M/M')$ (and so $D$ is a simple component of $F(G/M')$), hence by hypothesis $M'$ is trivial, so that $M$ is abelian and by the conditions in \cref{SubgruposAD}, $M$ is a cyclic group. Now we claim that $M$ has prime order. Let $d$ be a proper divisor of $|M|$, then $\quat{-1,-1}{F}\otimes_F F(\zeta_d)$ is a simple component of $F(G/C_{|M|/d})$. By hypothesis it must be a $2\times 2$-matrix ring over $F(\zeta_d)$, and it follows that $D\iso \quat{-1,-1}{F}\otimes_F F(\zeta_{|M|})$. By \cref{Shirvani-2.1.9}, $F(\zeta_d)$ is a totally imaginary field and $e_2(F(\zeta_d)/\Q)$ or $f_2(F(\zeta_d)/\Q)$ is even, moreover both $e_2(F(\zeta_{|M|})/\Q)$ and $f_2(F(\zeta_{|M|})/\Q)$ are odd. By \cref{reciprocity}, we have $$e_2(F(\zeta_{|M|})/\Q)=e_2(F(\zeta_{|M|})/F(\zeta_d))e_2(F(\zeta_d)/\Q)$$ and $$f_2(F(\zeta_{|M|})/\Q)=f_2(F(\zeta_{|M|})/F(\zeta_d))f_2(F(\zeta_d)/\Q)$$ are both odd, but this is a contradiction since $e_2(F(\zeta_d)/\Q)$ or $f_2(F(\zeta_d)/\Q)$ is even. So the claim follows.

By the above paragraph, we have that $G=\SL(2,3)\times C_p$ and $D\iso \quat{-1,-1}{F}\otimes_F F(\zeta_p)$ is an exceptional component of type 1 of $FG$. Then again by \cref{Shirvani-2.1.9} both $e_2(F(\zeta_p)/\Q)$ and $f_2(F(\zeta_p)/\Q)$ are odd. 

Now suppose that $G=\SL(2,3)\times C_p$.  Using \cref{Shirvani-2.1.9} and having in mind the Wedderburn decomposition of $\Q \SL(2,3)$, we can deduce that $D\iso \quat{-1,-1}{F}\otimes_F F(\zeta_p)$ is the unique exceptional component of $FG$ of type 1. Note that the non-abelian proper quotients of $G$ are $\SL(2,3)$, $A_4$ and $A_4\times C_p$. Due to the fact that $F$ is a totally real field, $F\SL(2,3)$ does not have exceptional components of type 1. As in the Wedderburn decomposition of $\Q A_4$ and $\Q (A_4\times C_p)$ only fields and matrix rings show up, we have that $F A_4$ and $F(A_4\times C_p)$ do not have division algebras as simple components. This finishes the proof.
\end{proof}


\begin{proposition}\label{quaternion}
 Let $F$ be an abelian number field and $Q_{4k}$ be the generalized quaternion group. 
 \begin{enumerate}
  \item Let $k$ be even. Then $Q_{4k}$ is $F$-critical if and only if $k=2$, $F$ is totally imaginary and both $e_2(F/\Q)$ and $f_2(F/\Q)$ are odd. In this case $F Q_{8}$ has an exceptional component $\quat{-1,-1}{F}$.
 \item Let $M$ be a group in (Z) of odd order such that $2$ has odd order modulo $|M|$. Then $Q_8\times M$ is $F$-critical if and only if $M$ is a cyclic group of prime order $p$, $F$ is totally real and both $e_2(F(\zeta_p)/\Q)$ and $f_2(F(\zeta_p)/\Q)$ are odd.
In this case $F(Q_8\times C_p)$ has an exceptional component $\quat{-1,-1}{F(\zeta_p)}$.
\end{enumerate}
\end{proposition}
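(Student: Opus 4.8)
The plan is to treat the two parts separately; part (1) will come down to a ramification computation for a maximal real cyclotomic field, and part (2) will closely follow the proof of \cref{SL23}(2).

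For (1), I would first determine the unique faithful simple component of $\Q Q_{4k}$, where $Q_{4k}=\langle a,b\mid a^{2k}=1,\ b^2=a^k,\ bab^{-1}=a^{-1}\rangle$. Since $Q_{4k}$ has a unique subgroup of order $2$, namely its centre $\langle a^k\rangle$, every primitive central idempotent coming from a strong Shoda pair $(H,K)$ with $K\neq 1$ affords a non-faithful representation; a computation with \cref{main} applied to $(H,K)=(\langle a\rangle,1)$ then shows the faithful component is $(\Q(\zeta_{2k})/\Q(\zeta_{2k})^{+},\sigma,-1)$, where $\Q(\zeta_{2k})^{+}$ is the maximal real subfield and $\sigma$ is complex conjugation. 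When $k$ is even, $4\mid 2k$, so $\zeta_4\in\Q(\zeta_{2k})$ and $\Q(\zeta_{2k})=\Q(\zeta_{2k})^{+}(\zeta_4)$; this identifies the faithful component of $\Q Q_{4k}$ with $\quat{-1,-1}{\Q(\zeta_{2k})^{+}}$, hence the faithful components of $FQ_{4k}$ with $\quat{-1,-1}{F\cdot\Q(\zeta_{2k})^{+}}$. Since the exceptional component of an $F$-critical group affords a faithful representation, if $Q_{4k}$ is $F$-critical then $\quat{-1,-1}{F\cdot\Q(\zeta_{2k})^{+}}$ must be a non-totally-definite division algebra, so by \cref{Shirvani-2.1.9} the field $F\cdot\Q(\zeta_{2k})^{+}$ is totally imaginary with $e_2$ and $f_2$ over $\Q$ odd; in particular $e_2(\Q(\zeta_{2k})^{+}/\Q)$ is odd. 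The key step is that this fails as soon as $k$ is even and $k>2$, because $e_2(\Q(\zeta_{2k})^{+}/\Q)$ is then even: if $2k$ is a power of $2$ then $\Q(\zeta_{2k})^{+}\supseteq\Q(\zeta_8)^{+}=\Q(\sqrt{2})$, in which $2$ ramifies; otherwise the odd part $m$ of $2k$ exceeds $1$, complex conjugation moves $\zeta_m$ and hence is not in the inertia group of $2$ in $\Q(\zeta_{2k})/\Q$, so $\Q(\zeta_{2k})/\Q(\zeta_{2k})^{+}$ is unramified at $2$ and $e_2(\Q(\zeta_{2k})^{+}/\Q)=e_2(\Q(\zeta_{2k})/\Q)=2^{v_2(2k)-1}$ with $v_2(2k)=v_2(k)+1\geq 2$. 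Thus $k=2$, and then $\Q(\zeta_4)^{+}=\Q$, the faithful component of $FQ_8$ is $\quat{-1,-1}{F}$, and since all proper quotients of $Q_8$ are abelian, \cref{Shirvani-2.1.9} yields exactly the stated equivalence.

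For (2) I would argue as in the proof of \cref{SL23}(2), now using $\Q Q_8\cong\Q^{4}\oplus\quat{-1,-1}{\Q}$ and $F(Q_8\times M)\cong FQ_8\otimes_F FM$, so that every faithful simple component of $F(Q_8\times M)$ has the form $\quat{-1,-1}{F}\otimes_F D$ with $D$ a faithful simple component of $FM$ (the faithful components of $FQ_8$ being only $\quat{-1,-1}{F}$). Assume $G=Q_8\times M$ is $F$-critical with exceptional component $B=\quat{-1,-1}{F}\otimes_F D$; being a division algebra forces $D$ to be a (faithful) division-algebra component of $FM$. Now $D$ is also a component of the proper quotient $F(G/(Q_8\times 1))=FM$, so it is not exceptional of type $1$; and it cannot be a totally definite quaternion algebra, since $M$ has odd order and a finite odd-order subgroup of the unit group of a quaternion division algebra is cyclic, which would make $D$ commutative. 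Hence $D$ is a field, so $M$ is abelian, hence cyclic (being in (Z)), $D=F(\zeta_{|M|})$ and $B\cong\quat{-1,-1}{F(\zeta_{|M|})}$. Next, $Q_8=G/(1\times M)$ is a proper quotient, so $\quat{-1,-1}{F}$ is not exceptional of type $1$; it cannot be split either, or $B$ would split; hence it is totally definite and $F$ is totally real. Finally, if $1<d<|M|$ then $Q_8\times C_d=G/(1\times C_{|M|/d})$ is a proper quotient, so $\quat{-1,-1}{F(\zeta_d)}$ is not exceptional of type $1$; since $F(\zeta_d)$ is totally imaginary ($d\geq 3$ odd, $F$ totally real) it must split, i.e.\ $e_2(F(\zeta_d)/\Q)$ or $f_2(F(\zeta_d)/\Q)$ is even; but $F(\zeta_d)\subseteq F(\zeta_{|M|})$, and as $\quat{-1,-1}{F(\zeta_{|M|})}$ is exceptional of type $1$, \cref{Shirvani-2.1.9} gives that $e_2$ and $f_2$ of $F(\zeta_{|M|})/\Q$ are odd, so by \cref{reciprocity} the same holds for $F(\zeta_d)/\Q$, a contradiction. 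Hence $|M|=p$ is an odd prime, $F$ is totally real, and \cref{Shirvani-2.1.9} gives that $e_2(F(\zeta_p)/\Q)$ and $f_2(F(\zeta_p)/\Q)$ are odd.

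For the converse of (2), with $F$ totally real and $e_2(F(\zeta_p)/\Q),f_2(F(\zeta_p)/\Q)$ odd, \cref{Shirvani-2.1.9} makes $\quat{-1,-1}{F(\zeta_p)}$ a division algebra, not totally definite since $F(\zeta_p)$ is totally imaginary, hence exceptional of type $1$; it occurs as a faithful component of $F(Q_8\times C_p)$, so condition (1) holds. The only non-abelian proper quotient of $Q_8\times C_p$ is $Q_8$, and $\quat{-1,-1}{F}$ is the only non-commutative component of $FQ_8$, which is totally definite because $F$ is totally real; all other proper quotients are abelian. Hence no proper quotient has an exceptional component of type $1$, so $Q_8\times C_p$ is $F$-critical with exceptional component $\quat{-1,-1}{F(\zeta_p)}$. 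I expect the main obstacle to be the ramification claim in part (1)—that $2$ ramifies in $\Q(\zeta_{2k})^{+}/\Q$ for even $k>2$, which needs the inertia-group bookkeeping sketched above—together with the step in part (2) that $D$ must be a field (equivalently, $M$ cyclic), which combines faithfulness of $B$, $F$-criticality applied to the quotient $M$, and the restriction on odd-order subgroups of quaternion division algebras.
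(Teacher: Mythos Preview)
Your argument is correct in both parts. Part (2) follows the paper's own route exactly: the paper simply writes ``(2) follows as in the proof of \cref{SL23}(2)'', and you have reproduced that argument with $Q_8$ in place of $\SL(2,3)$.

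Part (1) is also correct but genuinely different from the paper. The paper writes $k=2^t k'$ with $k'$ odd and proceeds in two stages. First, assuming $k'\neq 1$, it passes to the proper quotient $Q_{4\cdot 2^t}$ and uses $F$-criticality there: the component $\quat{-1,-1}{F(\zeta_{2^{t+1}}+\zeta_{2^{t+1}}^{-1})}$ is shown to be a division algebra (via a solution-lifting argument for the norm form), hence must be totally definite or split; a reciprocity contradiction rules out the split case, forcing $F$ to be totally real, which then contradicts exceptionality of the faithful component. Second, having reduced to $k=2^t$, the paper observes $\Q(\sqrt{2})\subseteq\Q(\zeta_{2^{t+1}})^+$ for $t>1$ to force $t=1$. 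You bypass both stages by a single ramification computation: for any even $k>2$ you show directly that $e_2(\Q(\zeta_{2k})^+/\Q)$ is even, using the inertia-group description of $2$ in $\Q(\zeta_{2k})/\Q$ when the odd part of $2k$ is nontrivial, and the inclusion $\Q(\sqrt{2})\subseteq\Q(\zeta_{2k})^+$ when $2k$ is a $2$-power. This is more economical---you never invoke criticality beyond faithfulness of the exceptional component, and you avoid the norm-form step---at the price of the short inertia bookkeeping you flagged. Both approaches coincide at the endpoint $k=2$, where the equivalence follows from \cref{Shirvani-2.1.9} and the abelianness of all proper quotients of $Q_8$.
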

\begin{proof}
 (1) Let $Q_{4k}=\GEN{a,b\mid a^{2k}=1,a^k=b^2,b\inv ab=a\inv}$ with $k=2^t k'$, $t\ge 1$ and $2\nmid k'$, then by \cref{SSPmetacyclic}, the non-commutative components of $\Q Q_{4k}$ come from the strong Shoda pairs $(\GEN{a},\GEN{a^d})$ with $d\mid 2k$ such that $d\neq 1,2$. The corresponding simple components are 
\begin{eqnarray*} \Q Q_{4k}e(Q_{4k},\GEN{a},\GEN{a^d}) = \left\{\begin{array}{ll} (\Q(\zeta_{d})/\Q(\zeta_{d}+\zeta_{d}^{-1}),1)= M_2(\Q(\zeta_d+\zeta_d^{-1})), & \mbox {if } d\mid k \\
 (\Q(\zeta_{d})/\Q(\zeta_{d}+\zeta_{d}^{-1}),-1)= \quat{-1,-1}{\Q(\zeta_{d}+\zeta_{d}^{-1})}, & \mbox {otherwise.} \end{array}\right.\end{eqnarray*}
 
Now suppose that $Q_{4k}$ is $F$-critical and $k'\neq 1$.  By the above paragraph the exceptional component of $F Q_{4k}$ comes from 
\begin{eqnarray*}
F\otimes_{\Q} \quat{-1,-1}{\Q(\zeta_{d}+\zeta_{d}^{-1})}
&=&\quat{-1,-1}{F(\zeta_{d}+\zeta_{d}^{-1})}^{[F\cap \Q(\zeta_{d}+\zeta_{d}^{-1}):\Q]}\\ 
&=&\left(F(\zeta_{d}+\zeta_{d}^{-1})\otimes_{\Q}\quat{-1,-1}{\Q}\right)^{[F\cap \Q(\zeta_{d}+\zeta_{d}^{-1}):\Q]}
\end{eqnarray*} 
for some $d$ satisfying that $d\mid 2k$ and $d\nmid k$. Note that this implies that $d=2^{t+1} k_1'$ where $k_1'\mid k'$. This quaternion algebra can not be totally definite (i.e. $F$ is not totally real) although it is a division algebra. Hence, by \cref{Shirvani-2.1.9}, $e_2(F(\zeta_{d}+\zeta_{d}^{-1})/\Q)$ and $f_2(F(\zeta_{d}+\zeta_{d}^{-1})/\Q)$ are odd. Due to $k'\neq 1$, we have that $Q_{4 \cdot 2^t}$ is a non-abelian proper quotient of $Q_{4k}$, moreover $ \quat{-1,-1}{F(\zeta_{2^{t+1}}+\zeta_{2^{t+1}}^{-1})}$ is a simple component of $F Q_{4 \cdot 2^t}$, let this algebra be denoted by $B$. Hence $B$ must be either a totally definite quaternion algebra or a $2\times 2$-matrix ring over $F(\zeta_{2^{t+1}}+\zeta_{2^{t+1}}^{-1})$. If $F$ splits the quaternion algebra $\quat{-1,-1}{\Q(\zeta_{2^{t+1}}+\zeta_{2^{t+1}}^{-1})}$, by \cref{Shirvani-2.1.9}, $e_2(F(\zeta_{2^{t+1}}+\zeta_{2^{t+1}}^{-1})/\Q)$ or ${f_2(F(\zeta_{2^{t+1}}+\zeta_{2^{t+1}}^{-1})/\Q)}$ is even. Applying \cref{reciprocity} for $e_2(F(\zeta_{d}+\zeta_{d}^{-1})/\Q)$ and $f_2({F(\zeta_{d}+\zeta_{d}^{-1})}/\Q)$ gives a contradiction.
Therefore $B$ is a totally definite quaternion algebra, and then $F(\zeta_{2^{t+1}}+\zeta_{2^{t+1}}^{-1})$ is totally real, and hence $F$ is totally real, again a contradiction. Therefore $k'=1$. On the other hand, by the description of the simple components above, $B$ is the only candidate for being an exceptional component of type 1. 

 By \cref{Shirvani-2.1.9}, $B$ is a division algebra whenever $F$ is totally real or $e_2(F(\zeta_{2^{t+1}}+\zeta_{2^{t+1}}^{-1})/\Q)$ and $f_2(F(\zeta_{2^{t+1}}+\zeta_{2^{t+1}}^{-1})/\Q)$ are odd. Because, for $t>1$, ${F(\zeta_{8}+\zeta_{8}^{-1})\subseteq F(\zeta_{2^{t+1}}+\zeta_{2^{t+1}}^{-1})}$ and $e_2(F(\zeta_{8}+\zeta_{8}^{-1})/\Q)=e_2(\Q(\sqrt{2})/\Q)=2$, the ramification index $e_2(F(\zeta_{2^{t+1}}+\zeta_{2^{t+1}}^{-1})/\Q)$ is even. But in this case $B$ is a totally definite quaternion algebra and hence not exceptional. We conclude that $t$ must be equal to 1 and $k=2$. Since $G$ is $F$-critical, $\quat{-1,-1}{F}$ is not a totally definite quaternion algebra. Hence $F$ is totally imaginary and $e_2(F/\Q)$ and $f_2(F/\Q)$ are odd.

Finally we prove the converse. By the assumptions and \cref{Shirvani-2.1.9} we have that $ \quat{-1,-1}{F}$ is an exceptional component of type 1 of $F Q_{8}$. 
The only proper quotients of $Q_{8}$ are abelian groups and hence $Q_{8}$ is $F$-critical.

(2) follows as in the proof of \cref{SL23} (2).
\end{proof}

Now we consider the Z-groups from Amitsur's classification.

Because of the following theorem, suggested by \cite{Amitsur}, one can discover a minimal faithful division algebra component in the Wedderburn decomposition of the groups in (Z).
\begin{proposition}\label{minimal}
 Let $F$ be a number field and $G=\GEN{a}_m \rtimes_k \GEN{b}_n$ with $\gcd(n,m)=1$. Let ${C\in\mathcal{C}_F(\GEN{ab^{n/k}})}$. If $FGe_C(G,\GEN{ab^{n/k}},1)$ is not a division algebra, then $FG$ does not have a division algebra as a faithful simple component. Furthermore, if $FG$ has a division algebra $D$ as a faithful simple component, then $FGe_C(G,\GEN{ab^{n/k}},1)$ is embedded in $D$ and in particular ${E_F(G,\GEN{ab^{n/k}})=G}$, $F(\zeta_{mk})*G/\GEN{ab^{n/k}}$ is a division algebra and $D$ has degree at least $\frac{n}{k}$.
\end{proposition}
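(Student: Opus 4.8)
The plan is to exhibit the relevant division algebra as the simple component attached, via \cref{main}, to the strong Shoda pair $(H,1)$ with $H=\GEN{ab^{n/k}}$, and then to show that \emph{every} faithful simple component of $FG$ must coincide with it. First I would dispose of the degenerate case $u:=o_m(r)=1$, where $G$ is abelian, $H=G$, $FGe_C(G,G,1)\iso F(\zeta_{mn})$ is a field and all assertions are trivial; so assume $u>1$. Then I would check that $H=\GEN{ab^{n/k}}$ is cyclic of order $mk$: indeed $b^{n/k}$ generates the kernel of the action of $\GEN b$ on $\GEN a$, hence is central, and since $\gcd(m,n)=1$ we get $\gcd(m,k)=1$, so $ab^{n/k}$ has order $mk$ and $H=\GEN a\times\GEN{b^{n/k}}$. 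Moreover $H\unlhd G$ (as $\GEN a\unlhd G$ and $\GEN{b^{n/k}}\leq Z(G)$); a short computation of the conjugates $(ab^{n/k})^{a^ib^j}=a^{r^j}b^{n/k}$ gives $\Cen_G(ab^{n/k})=H$, so $H$ is a self-centralizing abelian, hence maximal abelian, subgroup of $G=N_G(1)$; and the orthogonality condition for a strong Shoda pair is vacuous since, $H$ being normal, $\varepsilon(H,1)$ has a single $G$-conjugate. Thus $(H,1)$ is a strong Shoda pair and \cref{main} applies, giving $FGe_C(G,H,1)\iso M_{[G:E]}(F(\zeta_{mk})*_\tau^\sigma E/H)$ with $E=E_F(G,H/1)$ and center $\Fix(E/H)$ — a description independent of the chosen orbit $C$.

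The heart of the argument is the claim that if $\chi\in\Irr(G)$ is faithful, then $\chi|_H$ contains a faithful linear character of $H$. To prove it I would invoke Clifford's theorem (using $H\unlhd G$ abelian): $\chi|_H=e\sum_i\psi^{g_i}$ for some linear $\psi\in\Irr(H)$, so $\ker\chi=\bigcap_i(\ker\psi)^{g_i}$. The point is that $\ker\psi$ is already normal in $G$: writing $\ker\psi=(\ker\psi\cap\GEN a)\times(\ker\psi\cap\GEN{b^{n/k}})$ (possible by coprimality of $m$ and $k$), the first factor is the unique subgroup of $\GEN a$ of its order and hence stable under conjugation by $b$, while the second factor is central. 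Therefore $1=\ker\chi=\ker\psi$, i.e.\ $\psi$ is faithful.

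With the claim in hand I would conclude as follows. Given a faithful simple component $D=FGe_\chi$ of $FG$ with $\chi\in\Irr(G)$ faithful, the claim produces a faithful linear $\psi\in\Irr(H)$ with $\langle\chi|_H,\psi\rangle\neq0$; letting $C_0$ be its $\Gal(F(\zeta_{mk})/F)$-orbit, the primitive central idempotents $e_\chi$ and $e_{C_0}(G,H,1)$ of $FG$ must coincide (both are primitive and $\chi(e_{C_0}(G,H,1))\neq0$), whence $D\iso FGe_C(G,H,1)$. This already proves the first statement by contraposition. If in addition $D$ is a division algebra, then $M_{[G:E]}(F(\zeta_{mk})*_\tau^\sigma E/H)$ is a division algebra, which forces $[G:E]=1$, i.e.\ $E=G$, so $F(\zeta_{mk})*_\tau^\sigma G/H\iso D$ is a division algebra and $FGe_C(G,H,1)$ is embedded in $D$. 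Finally $G/H\iso(G/\GEN a)/(H/\GEN a)\iso C_n/C_k\iso C_{n/k}$, and by \cref{main} the automorphism $\sigma_{gH}$ acts on $\zeta_{mk}$ via the conjugation action of $g$ on $ab^{n/k}$; for $g=b^j$ this sends $ab^{n/k}\mapsto a^{r^j}b^{n/k}$, which is trivial exactly when $r^j\equiv1\bmod m$, i.e.\ when $b^j\in H$. Hence $\sigma$ is faithful on $G/H$, so $F(\zeta_{mk})*_\tau^\sigma G/H$ is a crossed product over the full Galois group $\Gal(F(\zeta_{mk})/\Fix(G/H))\iso G/H$, hence a central simple algebra of degree $[G:H]=n/k$; being a division algebra it has $\deg D=n/k\geq n/k$.

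The routine part consists of the group-theoretic verifications (that $H$ is cyclic, normal and self-centralizing, that $G/H$ is cyclic, and the explicit formula for $\sigma$). I expect the main obstacle to be the character-theoretic core: establishing that $\ker\psi$ is normal in $G$ and, with it, identifying the faithful simple component $FGe_\chi$ with $FGe_C(G,H,1)$ through the strong Shoda pair machinery of \cref{main}.
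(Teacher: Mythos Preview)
Your argument is correct and takes a genuinely different route from the paper's. The paper proceeds by descent to $\Q$: it invokes \cite[Lemma~2.3]{2013Caicedo} to embed $\Q Ge(G,A,1)$ into any faithful division component $D'$ of $\Q G$, tensors up to $F$, and then runs a nilpotent-element argument to force $FGe_C(G,A,1)$ to be a division algebra. Your proof instead works directly over $F$ via Clifford theory: the key observation that every subgroup of $H=\GEN{a}\times\GEN{b^{n/k}}$ is normal in $G$ (each factor being cyclic with all subgroups characteristic or central) lets you conclude that any faithful $\chi\in\Irr(G)$ restricts to a \emph{faithful} linear constituent on $H$, whence $e_\chi=e_{C_0}(G,H,1)$ for the appropriate orbit $C_0$.

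What this buys you is strictly more than the proposition asserts: you obtain $D\cong FGe_C(G,H,1)$, not merely an embedding, and with it the exact degree $n/k$ rather than a lower bound. In particular your argument already yields the uniqueness statement of \cref{unique} without the restriction to prime $m$, and it is entirely self-contained (no appeal to the external lemma from \cite{2013Caicedo}). The paper's approach, by contrast, is shorter precisely because it outsources the combinatorial core to that citation, and its nilpotent-element trick is robust enough to handle a priori the possibility that $D$ and $FGe_C(G,A,1)$ are distinct components---a possibility your argument rules out from the start.
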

\begin{proof}
Let $A=\GEN{ab^{n/k}}$, $E=E_F(G,A)$ and $C\in\mathcal{C}_F(A)$. Then $(A,1)$ is a strong Shoda pair of $G$ and, by \cref{main}, $$FGe_C(G,A,1)=M_{[G:E]}(F(\zeta_{mk})*E/A)$$ is a Wedderburn component of $F G$ which is a direct factor of $F \otimes_{\Q} \Q Ge(G,A,1)$. It is easy to check that $FGe_C(G,A,1)$ is a faithful component of $FG$.

Assume now that $FG$ has as a faithful simple component a division algebra $D$ different from $FGe_C(G,A,1)$. Then also $\Q G$ has a division algebra $D'$ different from $\Q Ge(G,A,1)$ as a faithful simple component. By the minimality of $\Q Ge(G,A,1)$ as explained in \cite[Lemma 2.3]{2013Caicedo}, $\Q Ge(G,A,1)$ is embedded in $D'$. But this means that $FGe_C(G,A,1)$ is a direct factor of ${F\otimes_{\Q}\Q Ge(G,A,1)}$, which in its turn is embedded in $F\otimes_{\Q} D'$.
If $FGe_C(G,A,1)$ is not a division algebra, then it is a matrix ring and it has nilpotent elements. But then also $F\otimes_{\Q} D'$ has nilpotent elements. Since $F\otimes_{\Q} D'$ is a direct sum of isomorphic copies of $D$, also $D$ has nilpotent elements, which is a contradiction. 

We conclude that $FGe_C(G,\GEN{ab^{n/k}},1)$ is a division algebra of degree $\frac{n}{k}$, which is embedded in $D$. Necessarily $E=G$ and $D$ has degree at least $\frac{n}{k}$.
\end{proof}


We investigate when $E_F(G,H/K)=G$ for the strong Shoda pair $(\GEN{ab^{n/k}},1)$.
\begin{lemma}\label{E}
 Let $F$ be a number field and $G=\GEN{a}_m \rtimes_k \GEN{b}_n$ with $\gcd(m,n)=1$, $b^{-1}ab=a^r$. Then $E_F(G,\GEN{ab^{n/k}})=G$ if and only if $\Q(\zeta_m)\cap F$ is contained in $\Fix(\Q(\zeta_m)\rightarrow \Q(\zeta_m):\zeta_m\mapsto \zeta_m^r)$. 
\end{lemma}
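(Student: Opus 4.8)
The plan is to translate the condition $E_F(G,\GEN{ab^{n/k}})=G$ directly into a statement about the image $I_m(F)$ of $\Gal(F(\zeta_m)/F)$ inside $\U(\Z/m\Z)$. Recall that, by definition, $E_F(G,H/K)$ consists of those $g\in N_G(K)$ which conjugate a fixed generator $hK$ of $H/K$ to $h^iK$ for some $i\in I_{[H:K]}(F)$. Here $K=1$, $H=\GEN{ab^{n/k}}$ has order $mk$ (using $\gcd(m,n)=1$, so $o_m(r)=n/k$ and $(ab^{n/k})$ has order $\lcm(m,k)=mk$ up to the usual identification), and $N_G(1)=G$. So the first step is to compute, for each generator $b^j a^s$ of $G$, the integer $i$ (modulo $mk$) with $(b^j a^s)^{-1}(ab^{n/k})(b^j a^s) = (ab^{n/k})^i$, and observe that $E_F(G,\GEN{ab^{n/k}})=G$ holds if and only if \emph{all} these $i$ lie in $I_{mk}(F)$.

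Next I would reduce this to a condition purely about the $P_m$-part, i.e.\ modulo $m$ rather than modulo $mk$. Since $\gcd(m,n)=1$ and $k\mid n$, we have $\gcd(m,k)=1$, so by CRT $\U(\Z/mk\Z)\cong \U(\Z/m\Z)\times\U(\Z/k\Z)$, and $I_{mk}(F)$ corresponds to $I_m(F)\times I_k(F)$ under a compatible splitting of $F(\zeta_{mk})$. Conjugation by $b$ sends $ab^{n/k}$ to $a^r b^{n/k}$, which in $\GEN{ab^{n/k}}$ is $(ab^{n/k})^{i_0}$ where $i_0\equiv r\pmod m$ and $i_0\equiv 1\pmod k$ (one checks the $\GEN{b^{n/k}}$-component is unchanged). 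Conjugation by a power of $a$ acts trivially on $\GEN{ab^{n/k}}$ up to this subgroup modulo the centre, so the only obstruction comes from the cyclic action generated by $r$. Thus $E_F(G,\GEN{ab^{n/k}})=G$ if and only if $r\bmod m$, equivalently the subgroup $\GEN{r}\subseteq\U(\Z/m\Z)$, is contained in $I_m(F)$.

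Finally I would invoke Galois theory to rephrase "$\GEN{r}\subseteq I_m(F)$" as a statement about fixed fields. The subgroup $I_m(F)\subseteq \U(\Z/m\Z)\cong\Gal(\Q(\zeta_m)/\Q)$ corresponds under the Galois correspondence to the subfield $\Q(\zeta_m)\cap F$ (indeed $\Fix(I_m(F))=\Q(\zeta_m)^{\Gal(F(\zeta_m)/F)}=\Q(\zeta_m)\cap F$). Similarly the cyclic subgroup $\GEN{\zeta_m\mapsto\zeta_m^r}$ corresponds to $\Fix(\Q(\zeta_m)\to\Q(\zeta_m):\zeta_m\mapsto\zeta_m^r)$. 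Since the Galois correspondence is inclusion-reversing, $\GEN{r}\subseteq I_m(F)$ is equivalent to $\Q(\zeta_m)\cap F\subseteq \Fix(\zeta_m\mapsto\zeta_m^r)$, which is exactly the claimed condition.

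I expect the main subtlety to be the bookkeeping in the first two steps: correctly identifying the order of $ab^{n/k}$ and the exponent $i$ realizing each conjugate inside $\GEN{ab^{n/k}}$, and keeping track of the CRT splitting $mk = m\cdot k$ so that the $\U(\Z/k\Z)$-component of every relevant $i$ is forced to be $1$ (which it is, because $b$ commutes with $b^{n/k}$ and $a$ centralizes enough of $\GEN{ab^{n/k}}$ to contribute nothing modulo $k$). Once that is pinned down, the passage to fixed fields is the standard inclusion-reversing Galois correspondence together with the identity $\Fix(I_m(F))=\Q(\zeta_m)\cap F$ recalled in the preliminaries, and there is nothing further to check.
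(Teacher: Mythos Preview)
Your approach is essentially identical to the paper's: both reduce $E_F(G,\langle ab^{n/k}\rangle)=G$ to the single membership $r'\in I_{mk}(F)$ (with $r'\equiv r\bmod m$, $r'\equiv 1\bmod k$) and then try to pass, via the Chinese Remainder Theorem and the Galois correspondence, to the condition $\Q(\zeta_m)\cap F\subseteq\Fix(\zeta_m\mapsto\zeta_m^r)$.

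The gap is your assertion that ``$I_{mk}(F)$ corresponds to $I_m(F)\times I_k(F)$''. Under the CRT identification $\U(\Z/mk\Z)\cong\U(\Z/m\Z)\times\U(\Z/k\Z)$ the subgroup $I_{mk}(F)$ is in general only a \emph{subdirect} product: it surjects onto each factor $I_m(F)$ and $I_k(F)$ but need not equal the full product. Consequently, knowing $r\in I_m(F)$ (equivalently $\Q(\zeta_m)\cap F\subseteq\Fix(\zeta_m\mapsto\zeta_m^r)$) does not force $(r,1)=r'$ to lie in $I_{mk}(F)$. A concrete failure: take $m=5$, $n=8$, $k=4$, $r=-1$ (so $G=C_5\rtimes_4 C_8$) and $F=\Q(\zeta_{20}+\zeta_{20}^{-1})$. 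Then $I_{20}(F)=\{1,19\}$, while $I_5(F)\times I_4(F)=\{1,4\}\times\{1,3\}$ has order~$4$; here $r'=9\notin I_{20}(F)$, so $b\notin E_F(G,\langle ab^2\rangle)$, yet $\Q(\zeta_5)\cap F=\Q(\sqrt 5)=\Fix(\zeta_5\mapsto\zeta_5^{-1})$. Thus your argument (and the paper's one-line ``since $\sigma$ fixes $\zeta_k$'') only establishes the ``only if'' direction cleanly; the stated equivalence does hold whenever $F(\zeta_m)\cap F(\zeta_k)=F$, e.g.\ when $k\le 2$ as in the application to $C_m\rtimes_2 C_4$, but you would need to supply that hypothesis or argue differently for the general ``if'' direction.
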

\begin{proof}
 Let $r'$ be such that $r'\equiv r \mod m$ and $r'\equiv 1\mod k$. Then, $E_F(G,\GEN{ab^{n/k}}) =G$ if and only if $\GEN{r'}\subseteq I_{mk}(F)$. This happens if and only if $\sigma:\zeta_{mk}\mapsto \zeta_{mk}^{r'}$ is a Galois automorphism of the extension $F(\zeta_{mk})/F$, which is equivalent with $F$ being in the fixed field of $\sigma$. Since $\sigma$ fixes $\zeta_k$, this again is equivalent with $\Q(\zeta_m)\cap F\subseteq \Fix(\Q(\zeta_m)\rightarrow \Q(\zeta_m):\zeta_m\mapsto \zeta_m^r)$.
\end{proof}

We investigate the structure of the simple algebra $FGe_C(G,\GEN{ab^{n/k}},1)$.
\begin{lemma}\label{structure}
 Let $F$ be a number field and $G=\GEN{a}_m \rtimes_k \GEN{b}_n$ with $\gcd(m,n)=1$, ${b^{-1}ab=a^r}$ and $\Q(\zeta_m)\cap F\subseteq \Fix(\Q(\zeta_m)\rightarrow \Q(\zeta_m):\zeta_m\mapsto \zeta_m^r)$. Let $\sigma:F(\zeta_{mk})\rightarrow F(\zeta_{mk}):\zeta_m\mapsto \zeta_m^r$, $A=\GEN{ab^{n/k}}$, $C\in\mathcal{C}_F(A)$ and $K=\Fix(\sigma)$. Then $$FGe_C(G,\GEN{ab^{n/k}},1)=(K(\zeta_{m})/K,\sigma,\zeta_k).$$

If furthermore $n=4$ and $k=2$, then $r\equiv -1 \mod m$ and $$FGe_C(G,\GEN{ab^{n/k}},1)=\quat{-1,(\zeta_m-\zeta_m^{-1})^2}{F(\zeta_m+\zeta_m^{-1})}.$$
\end{lemma}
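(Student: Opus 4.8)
The plan is to apply \cref{main} to the strong Shoda pair $(A,1)$ with $A = \GEN{ab^{n/k}}$ and identify the crossed product that comes out. First I would record that, since $\gcd(m,n)=1$, the element $ab^{n/k}$ has order $mk$ and $A = \GEN{ab^{n/k}}$ is cyclic of order $mk$ (here $u = o_m(r) = n/k$, so $(ab^{n/k})^{n/k}$ has order... more carefully, $A$ contains $\GEN{a}$ and $b^{n/k}$, which has order $k$, giving $|A| = mk$). By the hypothesis $\Q(\zeta_m)\cap F \subseteq \Fix(\zeta_m \mapsto \zeta_m^r)$ together with \cref{E}, we have $E_F(G,A) = G$, so $[G:E_F(G,A)] = 1$ and \cref{main} gives $FGe_C(G,A,1) \iso F(\zeta_{mk}) *_\tau^{\sigma'} (G/A)$, a crossed product with $G/A$ cyclic of order $n/k$. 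I would then compute the cocycle and the action explicitly: a generator of $G/A$ is $bA$, and $b^{n/k} \in A$ with $(ab^{n/k})$ a fixed generator $yK$... rather, taking $y = ab^{n/k}$ as the generator of $A$, the left inverse $\phi$ of $G/A \to G/K$ (here $K=1$) and the formulas of \cref{main} yield that $\sigma'$ acts on $\zeta_{mk}$ by $\zeta_m \mapsto \zeta_m^r$ and fixes $\zeta_k$, while $u_b^{n/k} = \zeta_k$ (up to the identification). This identifies $FGe_C(G,A,1)$ with the cyclic cyclotomic algebra $(F(\zeta_{mk})/\Fix(\sigma), \sigma, \zeta_k)$ once one checks that the center is $\Fix(\sigma)$; writing $K = \Fix(\sigma)$ and noting $F(\zeta_{mk}) = K(\zeta_m)$ (since $\zeta_k \in K$ as $\sigma$ fixes it, and $F \subseteq K$), this gives the first claim $(K(\zeta_m)/K, \sigma, \zeta_k)$.

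For the second claim, I would specialize to $n = 4$, $k = 2$. Then $o_m(r) = n/k = 2$, so $r^2 \equiv 1 \bmod m$; combined with the fact that $\GEN{a}\rtimes_k \GEN{b}_n$ being a genuine (non-cyclic on $\GEN a$) action forces $r \equiv -1 \bmod m$ — this is the only element of order $2$ in $\U(\Z/m\Z)$ that acts fixed-point-freely in the relevant sense (one should invoke here that $(A,1)$ being a strong Shoda pair, or rather the group-theoretic setup of Amitsur's $C_m \rtimes_2 C_4$, pins down $r$). With $r \equiv -1 \bmod m$, the automorphism $\sigma$ is $\zeta_m \mapsto \zeta_m^{-1}$ (and it fixes $\zeta_2 = -1$), so $K = \Fix(\sigma) = F(\zeta_m + \zeta_m^{-1})$ and $K(\zeta_m)/K$ is the quadratic extension generated by $\zeta_m$. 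Now the cyclic algebra $(K(\zeta_m)/K, \sigma, \zeta_2) = (K(\zeta_m)/K, \sigma, -1)$ is a quaternion algebra over $K$: writing $i = $ the generator $u_b$ with $i^2 = \zeta_k^{?}$ — here $u_b^2 = \zeta_k = \zeta_2 = -1$ — and $j = \zeta_m - \zeta_m^{-1}$, one has $j^2 = (\zeta_m - \zeta_m^{-1})^2 \in K$, $j \notin K$, and $i j i^{-1} = \sigma(j) = \zeta_m^{-1} - \zeta_m = -j$. Thus $i^2 = -1$, $j^2 = (\zeta_m - \zeta_m^{-1})^2$, $ij = -ji$, which is exactly the presentation of $\quat{-1,\,(\zeta_m-\zeta_m^{-1})^2}{F(\zeta_m+\zeta_m^{-1})}$ since $K = F(\zeta_m+\zeta_m^{-1})$.

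\textbf{Main obstacle.} The routine part is the bookkeeping in \cref{main}: correctly choosing the generator $yK$ of $A$, the section $\phi$, and reading off that the twist $\tau$ corresponds to $u_b^{n/k} = \zeta_k$ rather than some other power of $\zeta_k$ — this requires care because $ab^{n/k}$ (not $b$) is the chosen generator of $A$, so one must express $b^{n/k}$ in terms of $ab^{n/k}$ and $a$, and the $\gcd(m,n)=1$ condition is what makes this clean. The genuinely delicate point I expect to be the crux is justifying $r \equiv -1 \bmod m$ in the case $n=4$, $k=2$: one must argue that the only way $\GEN a_m \rtimes_k \GEN b_n$ with these parameters can arise (in particular, as a subgroup of a division ring, or merely from $o_m(r) = 2$ plus the semidirect-product conventions fixed in the Preliminaries) forces the action to be inversion. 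This is where I would lean on the conventions set up around $\GEN{a}_m \rtimes_k \GEN{b}_n$ and on Amitsur's list (\cref{SubgruposAD}(Z)(b)), whose group $C_m \rtimes_2 C_4$ is by definition $C_4$ acting by inversion. The rest is verifying the quaternion presentation, which is a direct computation once $\sigma$ is identified.
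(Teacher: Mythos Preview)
Your approach is correct and essentially identical to the paper's, which likewise applies \cref{E} and \cref{main} to obtain $FGe_C(G,A,1)=F(\zeta_{mk})*G/A$, reads off the action $u_\sigma \zeta_m = \zeta_m^r u_\sigma$, $u_\sigma \zeta_k = \zeta_k u_\sigma$, $u_\sigma^{n/k}=\zeta_k$, identifies the center as $K=\Fix(\sigma)$, and then in the case $n=4$, $k=2$ exhibits generators $x,y$ with $x^2=-1$, $y^2=(\zeta_m-\zeta_m^{-1})^2$, $xy=-yx$ (your $i=u_b$, $j=\zeta_m-\zeta_m^{-1}$ are exactly the intended choices). On your flagged obstacle: the paper's proof does not justify $r\equiv -1\bmod m$ either, and for general odd $m$ this indeed does not follow from $o_m(r)=2$ alone; however, in every application of the lemma either $r=-1$ is assumed explicitly (as at the start of the proof of \cref{Zb}) or $m$ is an odd prime $p$ (as in the quotient arguments in \cref{Zb} and \cref{Zc}), and then $(\Z/p\Z)^*$ is cyclic so $-1$ is its unique element of order $2$, so the point is harmless in context and you need not invoke \cref{SubgruposAD}.
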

\begin{proof}
 From \cref{E,main} it follows that \[FGe_C(G,\GEN{ab^{n/k}},1)=F(\zeta_{mk})*G/A=\sum_{i=0}^{\frac{n}{k}-1}F(\zeta_{mk})u_{\sigma^i}\] with $u_{\sigma^i}\zeta_{m}=\zeta_{m}^{r^i}u_{\sigma^i}$, $u_{\sigma^i}\zeta_{k}=\zeta_{k}u_{\sigma^i}$ and $u_{\sigma^i}^{n/k}=\zeta_k$. The center of this simple algebra is clearly equal to $K=\Fix(\sigma)$ and therefore we can denote it as the cyclic cyclotomic algebra $FGe_C(G,\GEN{ab^{n/k}},1)=(K(\zeta_{m})/K,\sigma,\zeta_k)$.

If $n=4$ and $k=2$, then the degree of $FGe_C(G,\GEN{ab^{n/k}},1)$ is 2 and hence it is a quaternion algebra over its center $F(\zeta_m+\zeta_m^{-1})$. An easy computation shows that $FGe_C(G,\GEN{ab^{n/k}},1)$ is generated over its center by elements $x$ and $y$ satisfying $x^2=\zeta_2=-1$ and $y^2=(\zeta_m-\zeta_m^{-1})^2$ and $xy=-yx$.
\end{proof}

\begin{lemma}\label{reduction}
 Let $N\unlhd G$, and denote by $\overline{\phantom{m}}$ the reduction mod $N$. If $(H,1)$ is a strong Shoda pair of $G$ and $(\overline{H},1)$ is a strong Shoda pair of $\overline{G}$, then $\overline{E_F(G,H)}\subseteq E_F(\overline{G},\overline{H})$.
\end{lemma}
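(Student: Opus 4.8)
The plan is to unwind the definition of $E_F(-,-)$ in the special case $K=1$ and then observe that the only content of the statement is the compatibility of the groups $I_k(F)\leq \U(\Z/k\Z)$ with reduction of the modulus $k$.

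First I would fix a generator $h$ of the cyclic group $H$ (cyclicity is part of the strong Shoda pair hypothesis) and set $m=|H|$. Since $N_G(1)=G$, the definition gives $E_F(G,H)=\{g\in G\mid g^{-1}hg=h^i \text{ for some } i\in I_m(F)\}$. The image $\overline{h}=hN$ generates $\overline{H}=HN/N$, which is a quotient of the cyclic group $H$, so $m'=|\overline{H}|$ divides $m$; the hypothesis that $(\overline{H},1)$ is a strong Shoda pair of $\overline{G}$ ensures $\overline{H}$ is cyclic, and we are free to use $\overline{h}$ as its fixed generator, so $E_F(\overline{G},\overline{H})=\{x\in\overline{G}\mid x^{-1}\overline{h}\,x=\overline{h}^j \text{ for some } j\in I_{m'}(F)\}$.

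Next I would take $g\in E_F(G,H)$, so $g^{-1}hg=h^i$ with $i\in I_m(F)$, and apply reduction mod $N$ to obtain $\overline{g}^{-1}\overline{h}\,\overline{g}=\overline{h}^{\,i}=\overline{h}^{\,i\bmod m'}$. It then remains to show $i\bmod m'\in I_{m'}(F)$, which is the key step. Since $m'\mid m$ we have $F(\zeta_{m'})\subseteq F(\zeta_m)$ with $F(\zeta_{m'})/F$ Galois, so restriction of automorphisms is a homomorphism $\Gal(F(\zeta_m)/F)\to\Gal(F(\zeta_{m'})/F)$; under the identifications of $\Gal(F(\zeta_k)/F)$ with $I_k(F)\leq\U(\Z/k\Z)$ described before \cref{main}, this restriction is exactly reduction $\U(\Z/m\Z)\to\U(\Z/m'\Z)$ (indeed $\sigma(\zeta_m)=\zeta_m^i$ forces $\sigma(\zeta_{m'})=\zeta_{m'}^i$). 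Hence $I_m(F)$ is carried into $I_{m'}(F)$, so $i\bmod m'\in I_{m'}(F)$. Taking $j=i\bmod m'$ shows $\overline{g}\in E_F(\overline{G},\overline{H})$, whence $\overline{E_F(G,H)}\subseteq E_F(\overline{G},\overline{H})$.

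There is essentially no hard obstacle: the argument is bookkeeping with the definition, and the only point requiring a moment's thought is the functoriality of $k\mapsto I_k(F)$ under divisibility of $k$, which is immediate from the compatibility of the isomorphisms in the diagram preceding \cref{main} with restriction of Galois automorphisms. The care needed is merely to match the chosen generators ($\overline{h}$ is the image of $h$) and to note that the strong Shoda pair hypotheses are used only to guarantee that $H$ and $\overline{H}$ are cyclic and normal in their ambient groups, so that the symbol $E_F$ is meaningful on both sides.
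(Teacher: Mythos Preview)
Your proof is correct and follows essentially the same approach as the paper: the paper's one-line proof simply records the key step you identify, namely that if $\zeta_{|H|}\mapsto\zeta_{|H|}^i$ lies in $\Gal(F(\zeta_{|H|})/F)$ then its restriction $\zeta_{[HN:N]}\mapsto\zeta_{[HN:N]}^i$ lies in $\Gal(F(\zeta_{[HN:N]})/F)$. You have merely unpacked the surrounding bookkeeping in more detail than the paper does.
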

\begin{proof}
This follows directly from the fact that if $\zeta_{|H|}\mapsto \zeta_{|H|}^i$ determines an automorphism of $\Gal(F(\zeta_{|H|})/F)$, then it restricts to an automorphism $\zeta_{[HN:N]}\mapsto \zeta_{[HN:N]}^i$ in $\Gal(F(\zeta_{[HN:N]})/F)$. 
\end{proof}

Because of the structure of metacyclic groups $\GEN{a}_m \rtimes_k \GEN{b}_n$ and their group algebras, we can deduce that, for $m$ a prime, the minimal faithful division algebra component in \cref{minimal} is essentially the only possible faithful division algebra showing up in the Wedderburn decomposition.
\begin{proposition}\label{unique}
 Let $F$ be a number field and $G=\GEN{a}_p \rtimes_k \GEN{b}_n$ with $p$ a prime not dividing $n$. The only possible faithful division algebra components of $FG$ are $FGe_C(G,\GEN{ab^{n/k}},1)$, with $C\in\mathcal{C}_F(\GEN{ab^{n/k}})$. 
\end{proposition}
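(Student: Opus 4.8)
The plan is to analyze the possible faithful simple components of $FG$ using the description of strong Shoda pairs for metacyclic groups (\cref{SSPmetacyclic}) together with \cref{main}. Since $G = \GEN{a}_p \rtimes_k \GEN{b}_n$ is metacyclic with $m = p$ prime, every primitive central idempotent of $\Q G$ is of the form $e(G,G_d,K)$ where $d \mid u$ (with $u = o_p(r)$) and $K \unlhd G_d$ satisfies the two conditions of \cref{SSPmetacyclic}. By \cref{main}(2), the primitive central idempotents of $FG$ are then the $e_C(G,G_d,K)$ for such pairs and $C \in \mathcal{C}_F(G_d/K)$. So the first step is: enumerate the strong Shoda pairs $(G_d, K)$ and determine which ones give a \emph{faithful} component.

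The key observation is that faithfulness forces strong constraints. If $e_C(G,G_d,K)$ is faithful, then the kernel of the corresponding representation is trivial, so in particular $\aug{K}$ and its $G$-conjugates must have trivial common ``stabilizing'' structure; more concretely, the core of $K$ in $G$ must be trivial. Since $\GEN{a}_p$ is a normal subgroup of prime order $p$ and $p \nmid n$, the subgroup $\GEN{a}_p$ is the unique minimal normal subgroup contained in the Fitting-type analysis — I would argue that if $a^p = 1$ and $K$ does not contain $\GEN a_p$, then $K \cap \GEN a_p = 1$, while if $K \supseteq \GEN a_p$ then $\GEN a_p$ is in the kernel and the component factors through $G/\GEN a_p$, contradicting faithfulness. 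Thus $K$ is a complement-type subgroup meeting $\GEN a_p$ trivially, i.e. $K \leq \GEN{a^{?}b^{?}, \dots}$ lies in a conjugate of a subgroup of $\GEN b_n$-type. Then $H/K = G_d/K$ being cyclic and the whole component being faithful pins down $d = u$ (so $G_d = G$, forcing $\GEN{ab^{n/k}} \leq H$ up to conjugacy) and $K$ trivial, which after identifying $\GEN{ab^{n/k}}$ as the relevant maximal abelian subgroup gives exactly the pair $(\GEN{ab^{n/k}},1)$.

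More precisely, the steps I would carry out are: (1) show that for any faithful component $e_C(G,G_d,K)$ one must have $\GEN a_p \not\leq K$, hence $K \cap \GEN a_p = 1$; (2) deduce that, since $G/\GEN a_p \cong \GEN a_p \times \dots$ is abelian wait — rather, since $G' \leq \GEN a_p$ (the commutator subgroup lies in $\GEN a_p$ because $G/\GEN a_p$ is abelian), and $G'$ is either trivial or all of $\GEN a_p$; in the faithful non-abelian case $G' = \GEN a_p$ and a faithful component must be non-commutative with $\GEN a_p$ acting nontrivially, forcing $G_d = G$ and the cyclic section $H/K$ to contain (a conjugate of) the image of $a$; (3) conclude $H$ is a maximal abelian subgroup of $G$ containing $\GEN a_p$, which (using $\gcd(p,n)=1$ and the Schur–Zassenhaus-type splitting) is conjugate to $\GEN{a b^{n/k}}$, and that $K = 1$; (4) finally invoke \cref{minimal} and \cref{structure} to identify the component as $FGe_C(G,\GEN{ab^{n/k}},1)$ and note that varying $C \in \mathcal{C}_F(\GEN{ab^{n/k}})$ accounts for the conjugate/Galois choices. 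The commutative case (when $G$ is abelian) is trivial since then there are no non-commutative division algebra components at all except fields, and faithfulness forces $G$ cyclic.

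The main obstacle I expect is step (3): carefully showing that the only strong Shoda pairs $(H,K)$ with $K$ core-free and $H/K$ cyclic and with the component faithful are, up to $G$-conjugacy, exactly $(\GEN{ab^{n/k}}, 1)$ — one must rule out ``mixed'' subgroups $H$ that are neither $\GEN a_p$-containing nor purely inside a conjugate of $\GEN b_n$, and handle the bookkeeping of which $G_d$ can occur. This is where the hypothesis that $m = p$ is prime does the real work: it collapses the lattice of subgroups of $\GEN a_p$ to just $1$ and $\GEN a_p$, so there is essentially no room for intermediate kernels, and any faithful component must ``see'' all of $\GEN a_p$, forcing $d = u$ and the maximal abelian subgroup to be the whole normalizer's abelian part $\GEN{ab^{n/k}}$. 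Once that structural claim is in place, the rest is a direct appeal to \cref{minimal}.
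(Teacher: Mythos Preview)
Your overall strategy---classify the strong Shoda pairs $(G_d,K)$ via \cref{SSPmetacyclic} and use faithfulness to force $(G_d,K)=(\langle ab^{n/k}\rangle,1)$---is sound and actually yields a more direct argument than the paper's. The paper instead invokes the degree bound from \cref{minimal}: any faithful division-algebra component has degree at least $\frac{n}{k}$, while the component attached to $(G_d,K)$ (for which necessarily $E_F(G,G_d/K)=G$) has degree $[G:G_d]=d\le\frac{n}{k}$, and this forces $d=\frac{n}{k}$. Your route can bypass \cref{minimal} entirely and in fact proves more: every \emph{faithful} component, not merely every faithful division-algebra component, arises from the pair $(\langle ab^{n/k}\rangle,1)$.

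That said, your write-up has a genuine gap at the crucial step, plus one outright error. The error is ``forcing $G_d=G$'' in step~(2): for a non-commutative component one always has $d>1$, so $G_d\ne G$. The gap is that you never articulate the mechanism pinning down $d=\frac{n}{k}$. Here it is: once $K\cap\langle a\rangle=1$, condition~(1) of \cref{SSPmetacyclic} reads $d=\min\{x\mid\frac{n}{k}:a^{r^x-1}\in K\}$; but $a^{r^x-1}\in\langle a\rangle$, so $a^{r^x-1}\in K$ forces $a^{r^x-1}=1$, i.e.\ $o_p(r)=\frac{n}{k}$ divides $x$, whence the minimum is $d=\frac{n}{k}$. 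That single line replaces your vague ``any faithful component must see all of $\langle a\rangle_p$''. After that, $G_d=\langle ab^{n/k}\rangle$ is cyclic of order $pk$, so $K$ is its unique subgroup of its order; since $|K|\mid k$ this gives $K\le\langle b^{n/k}\rangle=Z(G)$, and faithfulness then yields $K=1$. No appeal to \cref{minimal} is needed in your step~(4).
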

\begin{proof}
Let $r$ be such that $b^{-1}ab=a^r$. By \cref{SSPmetacyclic}, we deduce that the only non-commutative components of $FG$ are determined by the strong Shoda pairs $(G_d,K)$ with $1\neq d\mid \frac{n}{k}$, $G_d=\GEN{a,b^d}$, $G_d/K$ cyclic and $d=\min\{x\mid \frac{n}{k} : a^{r^x-1}\in K\}$. Let $A=G_{n/k}=\GEN{ab^{n/k}}$.

Assume that such a strong Shoda pair leads to a faithful division algebra component, then $E_F(G,G_d/K)=G$ and $FG e_C(G,G_d,K)$ has degree at least $\frac{n}{k}$ by \cref{minimal}. However its degree equals $|G/G_d|=d$. Therefore $d=\frac{n}{k}$, $G_d=A$ and $K=\GEN{b^{ln/k}}\subseteq Z(G)$ for $l\mid k$.

Since $(A, K)$ is a Shoda pair and the characteristic subgroup $K$ is contained in the kernel of the character associated to $e_C(G,A,K)$, in order for $FG e_C(G,A,K)$ to be a faithful component, $K$ has to be $1$.
\end{proof}

Clearly the groups of type (Z)(a) are never $F$-critical because the groups are abelian.

We study the groups of type (Z)(b).
\begin{theorem}\label{Zb}
 Let $F$ be an abelian number field and let $G$ be a finite group. Then $G$ is a Z-group of type (Z)(b) and $F$-critical if and only if $G=C_p\rtimes_2 C_4$ with $p$ a prime number satisfying ${p\equiv -1 \mod 4}$, $F$ is totally imaginary, $\Q(\zeta_{p})\cap F \subseteq \Q(\zeta_p+\zeta_p^{-1})$ and both $e_{p}(F/\Q)$ and $f_{p}(F/\Q)$ are odd. 

In this case $FG$ has an exceptional component $\quat{-1,(\zeta_{p}-\zeta_{p}^{-1})^2}{F(\zeta_{p}+\zeta_{p}^{-1})}$.
\end{theorem}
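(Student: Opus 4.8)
The plan is to classify the $F$-critical groups among those of type (Z)(b), that is, among groups of the form $C_m \rtimes_2 C_4$ with $m$ odd and $C_4$ acting by inversion. Let $G = C_m \rtimes_2 C_4 = \GEN{a}_m \rtimes_2 \GEN{b}_4$, so $n = 4$, $k = 2$ and $\frac{n}{k} = 2$; the action is by inversion, hence $r \equiv -1 \bmod m$. First I would invoke \cref{minimal} together with \cref{unique}: since an $F$-critical group must have a \emph{faithful} exceptional component (hence a faithful division algebra component), and since for $G$ to be $F$-critical every proper quotient must fail to have one, I can reduce to understanding the minimal faithful division algebra component $FGe_C(G,\GEN{ab^{n/k}},1)$, which by \cref{structure} (applied with $n=4$, $k=2$) is the quaternion algebra $\quat{-1,(\zeta_m-\zeta_m^{-1})^2}{F(\zeta_m+\zeta_m^{-1})}$. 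Its existence as a Wedderburn component requires $E_F(G,\GEN{ab^{n/k}}) = G$, which by \cref{E} is exactly the condition $\Q(\zeta_m)\cap F \subseteq \Fix(\zeta_m\mapsto\zeta_m^{-1}) = \Q(\zeta_m+\zeta_m^{-1})$.

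Next I would show $m$ must be a prime $p \equiv -1 \bmod 4$. For the primality: if $p_1\mid m$ is a prime then $C_{p_1}\rtimes_2 C_4$ is a quotient of $G$, and if this quotient were itself to have a faithful exceptional component we would contradict minimality unless $m = p_1$; I would need to check that whenever the component for $m$ is a \emph{non-definite} division algebra, so is the one for a prime divisor, arguing via \cref{reduction} and the reciprocity rules \cref{reciprocity} exactly as in the analogous steps of \cref{SL23}(2) and \cref{quaternion}. The condition $p \equiv -1 \bmod 4$ should come from condition (c)(iii) in \cref{SubgruposAD}(Z)(c): with $q = 2$, $k = 2$, we have $v_2(k) = 1$, which forces $p \equiv -1 \bmod 4$ (if $p \equiv 1 \bmod 4$, condition (c)(ii) would demand $v_2(p-1)\le v_2(k) = 1$, impossible; and in fact for $p\equiv 1\bmod 4$ the relevant algebra $\quat{-1,\ast}{\Q(\zeta_p+\zeta_p^{-1})}$ splits since $-1$ is a norm — I would use $\sqrt{-1}\in\Q(\zeta_p)$ and a completion/ramification argument to see the quaternion algebra is split at all primes). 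So only $p \equiv -1 \bmod 4$ survives.

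Then I would pin down the number-theoretic conditions on $F$. The algebra $B = \quat{-1,(\zeta_p-\zeta_p^{-1})^2}{F(\zeta_p+\zeta_p^{-1})}$ must be (i) a division algebra and (ii) not totally definite (else it is not exceptional). By \cref{splitting}, applied to the base quaternion algebra $\quat{-1,(\zeta_p-\zeta_p^{-1})^2}{\Q(\zeta_p+\zeta_p^{-1})} = \Q Ge(G,\GEN{ab^2},1)$ over $K_0 = \Q(\zeta_p+\zeta_p^{-1})$: this base algebra has $m_\infty = 2$ (it is the definite quaternion algebra arising from $C_p\rtimes_2 C_4$ over $\Q$, which has $m_\infty=2$ by \cref{hermaninfty} since the twist is $\zeta_2 = -1$) and its only other nontrivial local index is at $p$ (by \cref{olteanu}, since $n = p$ in the cyclotomic description, it ramifies only at $p$). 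So $F(\zeta_p+\zeta_p^{-1})\otimes B_0$ is a division algebra iff $F(\zeta_p+\zeta_p^{-1})$ is totally real, or $e_p$ and $f_p$ of $F(\zeta_p+\zeta_p^{-1})/\Q(\zeta_p+\zeta_p^{-1})$ are both odd; for it to be moreover \emph{non-definite}, $F(\zeta_p+\zeta_p^{-1})$ (equivalently $F$, since $F(\zeta_p+\zeta_p^{-1})$ is totally real iff $F\subseteq\R$... — here I must be careful) must be totally imaginary, leaving the odd-$e_p$, odd-$f_p$ condition, which via \cref{reciprocity} and the equality $e_p(F(\zeta_p+\zeta_p^{-1})/\Q) = e_p(F(\zeta_p+\zeta_p^{-1})/\Q(\zeta_p+\zeta_p^{-1}))\cdot e_p(\Q(\zeta_p+\zeta_p^{-1})/\Q)$ — and using that $p$ is totally ramified in $\Q(\zeta_p)/\Q$ with $e_p(\Q(\zeta_p+\zeta_p^{-1})/\Q) = \frac{p-1}{2}$ even when $p\equiv -1\bmod 4$... — I would repackage as $e_p(F/\Q)$ and $f_p(F/\Q)$ both odd. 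The conjunction of all extracted conditions gives exactly the statement. Finally, for the converse, I would verify that under these hypotheses $B$ is indeed an exceptional component of type 1, and that every proper quotient — necessarily of the form $C_d\rtimes_2 C_4$ with $d\mid p$ (so $d=1$, abelian) or $C_p\rtimes_2 C_2 \cong D_{2p}$ or $C_p\rtimes C_4$ with trivial action, plus abelian ones — has a Wedderburn decomposition over $F$ consisting only of fields and matrix rings, so none is exceptional of type 1.

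The main obstacle I anticipate is the bookkeeping translating ramification/residue data between the tower $\Q \subseteq \Q(\zeta_p+\zeta_p^{-1}) \subseteq F(\zeta_p+\zeta_p^{-1})$ and the conditions stated purely in terms of $F/\Q$: one must correctly account for the parity contributions of $e_p(\Q(\zeta_p+\zeta_p^{-1})/\Q)$ and of $[F\cap\Q(\zeta_p):\Q]$ (the multiplicity exponent appearing when tensoring up), and check these cancel or combine to yield precisely "odd $e_p(F/\Q)$ and odd $f_p(F/\Q)$" — this is the same kind of delicate parity juggling that appears in \cref{quaternion}(1), and getting the edge cases ($p\equiv 1$ versus $p\equiv -1 \bmod 4$, $F\subseteq\R$ versus not) right is where the argument is most likely to need care.
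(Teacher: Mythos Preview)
Your overall strategy matches the paper's: identify the minimal faithful component via \cref{minimal} and \cref{structure}, reduce $m$ to a prime via a quotient argument, then extract the conditions on $p$ and $F$ from local data. Two steps, however, are wrong as written.

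First, your derivation of $p \equiv -1 \bmod 4$ from the conditions in (Z)(c) is invalid: groups of type (Z)(b) form a separate case in \cref{SubgruposAD} and are \emph{not} subject to constraints (c)(i)--(iii). Indeed $C_5 \rtimes_2 C_4$ is a perfectly good subgroup of a division ring. Your fallback claim --- that for $p\equiv 1\bmod 4$ the algebra $\quat{-1,(\zeta_p-\zeta_p^{-1})^2}{\Q(\zeta_p+\zeta_p^{-1})}$ splits because ``$-1$ is a norm'' --- is also false: over the totally real field $\Q(\zeta_p+\zeta_p^{-1})$ this is a totally definite division algebra ($m_\infty=2$ by \cref{hermaninfty}). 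What \emph{is} true, and what the paper proves via \cref{hermanodd} with $e=2$, $f=1$, is that $m_p\quat{-1,(\zeta_p-\zeta_p^{-1})^2}{\Q(\zeta_p+\zeta_p^{-1})}\ne 1$ precisely when $p\equiv -1\bmod 4$. Combined with \cref{olteanu} (only $p$ and $\infty$ can ramify) and \cref{splitting}, the tensor with a totally imaginary $F$ stays a division ring only in that case. So the congruence on $p$ comes from the $p$-local index, not from Amitsur's list.

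Second, you write $e_p(\Q(\zeta_p+\zeta_p^{-1})/\Q)=\tfrac{p-1}{2}$ ``even when $p\equiv -1\bmod 4$'': it is \emph{odd} (for $p=4k+3$, $\tfrac{p-1}{2}=2k+1$). This oddness is precisely what makes the repackaging go through: since also $f_p(\Q(\zeta_p+\zeta_p^{-1})/\Q)=1$ and both $e_p(F(\zeta_p+\zeta_p^{-1})/F)$ and $f_p(F(\zeta_p+\zeta_p^{-1})/F)$ divide the odd number $\tfrac{p-1}{2}$, \cref{reciprocity} yields that $e_p,f_p$ of $F(\zeta_p+\zeta_p^{-1})/\Q(\zeta_p+\zeta_p^{-1})$ are odd iff $e_p(F/\Q)$, $f_p(F/\Q)$ are. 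With $\tfrac{p-1}{2}$ even the equivalence would collapse. Two smaller points: \cref{unique} is stated only for prime $m$, so you cannot invoke it before establishing primality --- the paper uses only \cref{minimal} at that stage; and in your converse the quotient ``$C_p\rtimes C_4$ with trivial action'' does not occur --- the proper quotients of $C_p\rtimes_2 C_4$ are just cyclic groups and $D_{2p}$.
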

\begin{proof}
Let $G=\GEN{a}_m \rtimes_2 \GEN{b}_4$ with $b^{-1}ab = a^{-1}$ and $m$ odd. Let $F$ be an abelian number field. Assume that $G$ is $F$-critical and let $A=\GEN{ab^2}$. Then $E_F(G,A)=G$ and $F Ge_C(G,A,1)= \quat{-1,(\zeta_{m}-\zeta_{m}^{-1})^2}{F(\zeta_{m}+\zeta_{m}^{-1})}$ is a division algebra for any $C\in\mathcal{C}_F(A)$ because of \cref{structure,minimal}. By \cref{E}, $E_F(G,A)=G$ is equivalent with $\Q(\zeta_{m})\cap F \subseteq \Q(\zeta_{m}+\zeta_{m}^{-1})$. Assume that $m$ is not prime and choose a prime divisor $p$ of $m$. Then $N=\GEN{a^p}\unlhd G$ and we will use bar notation for reduction modulo $N$. Now $\overline{G} = \GEN{\overline{a}}_p\rtimes_2\GEN{\overline{b}}_4$. By \cref{reduction}, $E_F(\overline{G},\overline{A})=\overline{G}$. Therefore, for any $D\in \mathcal{C}_F(\overline{A})$, $F \overline{G} e_D(\overline{G},\overline{A},1) = \quat{-1,(\zeta_p-\zeta_p^{-1})^2}{F(\zeta_p+\zeta_p^{-1})}$ 
is itself a division algebra. Indeed, suppose that $\quat{-1,(\zeta_p-\zeta_p^{-1})^2}{F(\zeta_p+\zeta_p^{-1})}$ is not a division algebra, then $-x^2-(\zeta_p-\zeta_p^{-1})^2y^2=z^2$ has a non-zero solution in $F(\zeta_p+\zeta_p^{-1})^3\subseteq F(\zeta_m+\zeta_m^{-1})^3$, but then also $-x^2-(\zeta_m-\zeta_m^{-1})^2y^2=z^2$ has a non-zero solution in $F(\zeta_m+\zeta_m^{-1})^3$, a contradiction. Since $G$ is $F$-critical, $F \overline{G} e_D(\overline{G},\overline{A},1)$ has to be a totally definite quaternion algebra and hence $F(\zeta_{p}+\zeta_{p}^{-1})$ is totally real. But this means that $F$ is totally real. 

Since $G$ is by assumption $F$-critical, one of the strong Shoda pairs $(A,K)$ with $K\subset A$ and $a\notin K$ should produce a division algebra which is not a totally definite quaternion algebra (see \cref{SSPmetacyclic}). The center of such a $FGe_C(G,A,K)$ equals $F(\zeta_{[A:K]}+\zeta_{[A:K]}^{-1})$ and is not totally real (since if $F(\zeta_{[A:K]}+\zeta_{[A:K]}^{-1})$ is totally real then $-1$ and $(\zeta_{[A:K]}-\zeta_{[A:K]}^{-1})^2$ are totally negative). Therefore $F$ is not totally real, a contradiction. 

Hence $F$ is totally imaginary, $m=p$ prime and 
\begin{eqnarray*}
 F Ge_C(G,A,1) &=& (F(\zeta_{p})/F(\zeta_p+\zeta_p^{-1}),\zeta_p\mapsto \zeta_p^{-1},-1)\\
 &=& \quat{-1,(\zeta_{p}-\zeta_{p}^{-1})^2}{F(\zeta_{p}+\zeta_{p}^{-1})}\\
&=&F(\zeta_{p}+\zeta_{p}^{-1})\otimes_{\Q(\zeta_{p}+\zeta_{p}^{-1})}\quat{-1,(\zeta_{p}-\zeta_{p}^{-1})^2}{\Q(\zeta_{p}+\zeta_{p}^{-1})}
\end{eqnarray*}
is a division algebra by \cref{unique}. By \cref{splitting,olteanu}, $m_{p}\quat{-1,(\zeta_{p}-\zeta_{p}^{-1})^2}{\Q(\zeta_{p}+\zeta_{p}^{-1})} \not= 1$ and both $e_{p}(F(\zeta_{p}+\zeta_{p}^{-1})/\Q(\zeta_{p}+\zeta_{p}^{-1}))$ and $f_{p}(F(\zeta_{p}+\zeta_{p}^{-1})/\Q(\zeta_{p}+\zeta_{p}^{-1}))$ are odd. 

By \cref{hermanodd}: $$m_{p}\quat{-1,(\zeta_{p}-\zeta_{p}^{-1})^2}{\Q(\zeta_{p}+\zeta_{p}^{-1})} = \min\left\{l\in \N \left| \frac{p^f-1}{\gcd(p^f-1,e)} \equiv 0 \mod \frac{2}{\gcd(2,l)}\right.\right\},$$ where $e=e_p(\Q(\zeta_p)/\Q(\zeta_p+\zeta_p^{-1}))=2$ and $f=f_p(\Q(\zeta_p+\zeta_p^{-1})/\Q)=1$. Then ${m_{p}\quat{-1,(\zeta_{p}-\zeta_{p}^{-1})^2}{\Q(\zeta_{p}+\zeta_{p}^{-1})} \not= 1}$ if and only if $p\equiv -1\mod 4$. Also $e_p(\Q(\zeta_{p}+\zeta_{p}^{-1})/\Q)=\frac{p-1}{2}$ is odd since $p\equiv -1\mod 4$, $f_p(\Q(\zeta_{p}+\zeta_{p}^{-1})/\Q)=1$ and both $e_{p}(F(\zeta_{p}+\zeta_{p}^{-1})/F)$ and $f_{p}(F(\zeta_{p}+\zeta_{p}^{-1})/F)$ divide $\frac{p-1}{2}$ which is odd. Therefore $e_{p}(F(\zeta_{p}+\zeta_{p}^{-1})/\Q(\zeta_{p}+\zeta_{p}^{-1}))$ and $f_{p}(F(\zeta_{p}+\zeta_{p}^{-1})/\Q(\zeta_{p}+\zeta_{p}^{-1}))$ are odd if and only if $e_p(F/\Q)$ and $f_p(F/\Q)$ are odd. We conclude that $G$ is as in the statement of the theorem. 

Assume now that $G$ and $F$ are as in the statement, then clearly $G$ is a Z-group of type (Z)(b). Let $A=\GEN{ab^2}$. Since $\Q(\zeta_{p})\cap F \subseteq \Q(\zeta_p+\zeta_p^{-1})$, $E_F(G,A)=G$ by \cref{E}. Therefore $FG e_C(G,A,1)=\quat{-1,(\zeta_p-\zeta_p^{-1})^2}{F(\zeta_p+\zeta_p^{-1})}$ for any $C\in\mathcal{C}_F(A)$. This simple component has degree 2 over its center $F(\zeta_{p}+\zeta_{p}^{-1})$, which is totally imaginary. Therefore it is not a totally definite quaternion algebra. Due to, $p\equiv -1 \mod 4$,  $m_{p}\quat{-1,(\zeta_{p}-\zeta_{p}^{-1})^2}{\Q(\zeta_{p}+\zeta_{p}^{-1})} \not= 1$ and because of the assumptions both $e_{p}(F(\zeta_{p}+\zeta_{p}^{-1})/\Q(\zeta_{p}+\zeta_{p}^{-1}))$ and $f_{p}(F(\zeta_{p}+\zeta_{p}^{-1})/\Q(\zeta_{p}+\zeta_{p}^{-1}))$ are odd. By \cref{splitting}, the simple component $\quat{-1,(\zeta_p-\zeta_p^{-1})^2}{F(\zeta_p+\zeta_p^{-1})}$ is now a division algebra. Furthermore $G$ has only quotients isomorphic to cyclic and dihedral groups, which have only fields and matrix rings as simple components over $F$. Therefore $G$ is $F$-critical. 
\end{proof}

To conclude we look at the groups of type (Z)(c).
\begin{theorem}\label{Zc}
 Let $F$ be an abelian number field and let $G$ be a finite group. Then $G$ is a Z-group of type (Z)(c) and $F$-critical if and only if one of the following holds:
\begin{enumerate}[label=(\alph*),ref=\alph*]
 \item\label{Zc1} $G=C_q \times (C_p\rtimes_2 C_4)$ with $q$ and $p$ different odd prime numbers with $o_q(p)$ odd and ${p\equiv -1 \mod 4}$. Moreover $F$ is totally real and both $e_{p}(F(\zeta_q)/\Q)$ and $f_{p}(F(\zeta_q)/\Q)$ are odd. In this case $FG$ has an exceptional component $\quat{-1,(\zeta_{p}-\zeta_{p}^{-1})^2}{F(\zeta_q,\zeta_{p}+\zeta_{p}^{-1})}$;

 \item\label{Zc2} $G=\GEN{a}_p\rtimes_k \GEN{b}_n$ with $n\geq 8$, $p$ an odd prime numbers not dividing $n$, $b^{-1}ab=a^r$, and both $k$ and $\frac{n}{k}$ are divisible by all the primes dividing $n$. Moreover $\Q(\zeta_p)\cap F\subseteq \Q(\zeta_p+\zeta_p^r+...+\zeta_p^{r^{\frac{n}{k}-1}})$, $m_p=\frac{n}{k}$ and one of the following holds:

\begin{enumerate}[label=(\roman*),ref=\roman*]
\item \label{zc1} either $p\equiv 1 \mod 4$ or $n$ is odd, $v_q(p-1)\le v_q(k)$ for every prime divisor $q$ of $n$ and $m_{p,h}< \frac{n}{k}$ for every $h\ne k$ divisor of $k$ such that $v_q(p-1)\le v_q(h)$ for every prime divisor $q$ of $n$. If $n=2k$ and $F$ is totally imaginary, then $m_{p,2}=1$.
\item \label{zc2} $p\equiv -1 \mod 4$, $v_2(k)=1$, $v_2(n)=2$, $v_q(p-1)\le v_q(k)$ for every odd prime divisor $q$ of $n$ and $m_{p,h}< \frac{n}{k}$ for every $h\ne k$ divisor of $k$ such that $v_2(h)=1$ and $v_q(p-1)\le v_q(h)$ for every odd prime divisor $q$ of $n$.
\item \label{zc3}$p\equiv -1 \mod 4$, $v_2(p+1)+1\le v_2(k)$, $v_2(n)=v_2(k)+1$, $v_q(p-1)\le v_q(k)$ for every odd prime divisor $q$ of $n$ and 
\begin{enumerate}[label=(\arabic*),ref=\arabic*]
\item $m_{p,h}< \frac{n}{k}$ for every $h\ne k$ divisor of $k$ such that $v_2(p+1)+1\le v_2(h)$ and $v_q(p-1)\le v_q(k)$ for every odd prime divisor $q$ of $n$,
\item $m_{p,h}< \frac{n}{k}$ for every divisor $h$ of $k$  different from $2$ and $k$ such that $v_2(h)=1$ and $v_q(p-1)\le v_q(h)$ for every odd prime divisor $q$ of $n$. If $n=2k$ and $F$ is totally imaginary, then $m_{p,2}=1$.
\end{enumerate}
\end{enumerate}

Here $$m_p=\min\left\{l\in\N \left| \frac{p^f-1}{\gcd(p^f-1,e)} \equiv 0 \mod \frac{k}{\gcd(k,l)}\right.\right\}$$
with 
$K=F(\zeta_k,\zeta_p+\zeta_p^r+...+\zeta_p^{r^{\frac{n}{k}-1}})$, $e=e_p(F(\zeta_{pk})/K)$ and $f=f_p(K/\Q)$, and $$m_{p,h}=\min\left\{l\in\N \left| \frac{p^{f_h}-1}{\gcd(p^{f_h}-1,e_h)} \equiv 0 \mod \frac{h}{\gcd(h,l)}\right.\right\}$$ with $K_h=K\cap F(\zeta_{ph})$, 
$e_h=e_p(F(\zeta_{ph})/K_h)$ and $f_h=f_p(K_h/\Q)$. 

In this case $FG$ has an exceptional component $(K(\zeta_{p})/K,\sigma,\zeta_{k})$, where the action is defined by $\sigma: F(\zeta_{pk})\rightarrow F(\zeta_{pk}):\zeta_p\mapsto \zeta_p^r;\zeta_k\mapsto \zeta_k$.
\end{enumerate}
\end{theorem}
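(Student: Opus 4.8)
The statement is an equivalence, and I would prove each direction with the circle of ideas behind \cref{Zb,SL23,quaternion}. Write $G=\GEN{a}_m\rtimes_k\GEN{b}_n$ with $\gcd(m,n)=1$, $b^{-1}ab=a^r$, and $A=\GEN{ab^{n/k}}$, so $(A,1)$ is a strong Shoda pair of $G$. The common starting point is the following machinery. By \cref{remarkZ} a (Z)(c)-group decomposes as $G=\prod_{p\mid m}(P_p\rtimes_{k_p}R_p)$ with the $R_p$ of pairwise coprime orders (as $\GEN{b}_n$ is cyclic and equals their internal direct product), with $R_p=1$ exactly on the primes on which $\GEN{b}_n$ acts trivially, with $k_p\neq1$ whenever $R_p\neq1$, and with $v_q(p-1)\le v_q(k_p)=v_q(k)$ for $q\mid|R_p|$ by \eqref{Zinequality}. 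If $G$ is $F$-critical its exceptional component is faithful, so by \cref{minimal} it contains $FGe_C(G,A,1)$ for $C\in\mathcal{C}_F(A)$; hence $E_F(G,A)=G$, i.e.\ $\Q(\zeta_m)\cap F\subseteq\Fix(\Q(\zeta_m)\rightarrow\Q(\zeta_m):\zeta_m\mapsto\zeta_m^r)$ by \cref{E}, and by \cref{structure} $FGe_C(G,A,1)=(K(\zeta_m)/K,\sigma,\zeta_k)$ is a cyclic cyclotomic algebra of degree $\tfrac nk$ with centre $K=F(\zeta_k,\zeta_m+\zeta_m^r+\dots+\zeta_m^{r^{n/k-1}})$. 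Writing the twist as $\zeta_k=\zeta_{mk}^{m}$ and combining \cref{lcm} with \cref{olteanu} (only the rational prime $p\mid m$ and the infinite prime can make $\ind\neq1$), \cref{hermaninfty} (infinite local index $2$ iff $K$ is totally real and $k=2$) and \cref{hermanodd} at $p$, one obtains the stated $m_p$ as the $p$-local index, so $FGe_C(G,A,1)$ is a division algebra exactly when $\lcm(m_\infty,m_p)=\tfrac nk$, which for $m$ prime forces $m_p=\tfrac nk$.

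The crux — and the step I expect to be hardest — is to deduce the group structure. I would run through the proper quotients $\overline{G}$ obtained by deleting or shrinking the cyclic Sylow factors $P_p$; by \cref{reduction} the equality $E_F(\overline{G},\overline{A})=\overline{G}$ persists, so $\overline{G}$ carries the corresponding cyclic cyclotomic component, and a heredity argument (prototyped by the norm-equation/cyclotomic-subfield passage in the proof of \cref{Zb}, and more generally by the nilpotent-element argument of \cref{minimal}) shows that this component is a division algebra whenever $FGe_C(G,A,1)$ is. Since $G$ is $F$-critical, no proper quotient has an exceptional component, so each such division component is forced to be a totally definite quaternion algebra — hence of degree $2$ with totally real centre, whence $F$ is totally real. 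Together with the coprimality of the $R_p$ and \eqref{Zinequality} this excludes two primes $p\mid m$ with $R_p\neq1$, forces $|P_p|=p$ for the unique such prime and, when that factor is proper, also $R_p=\GEN{b}_4$ acting by inversion, and — by the minimality argument of \cref{SL23}(2) and \cref{quaternion}(2) applied to the cyclotomic fields coming from the $P_{p'}$ with $R_{p'}=1$ — permits at most one further prime $q$, of order $q$. Thus either $G=C_q\times(C_p\rtimes_2C_4)$, which is case~\eqref{Zc1}, or $m=p$ is prime, and then $n\ge8$ since $n=4$ is the group $C_p\rtimes_2C_4$ already treated in \cref{Zb}, which is case~\eqref{Zc2}.

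It then remains to pin down $F$ and the remaining parameters. In case~\eqref{Zc1}, $FG\iso FC_q\otimes_F F(C_p\rtimes_2C_4)$; the proper quotient $C_p\rtimes_2C_4$ must be non-exceptional while $FG$ must carry a division component, so $\quat{-1,(\zeta_p-\zeta_p^{-1})^2}{F(\zeta_p+\zeta_p^{-1})}$ is totally definite (hence $F$ totally real), and then its base change $\quat{-1,(\zeta_p-\zeta_p^{-1})^2}{F(\zeta_q,\zeta_p+\zeta_p^{-1})}$ is not totally definite because $F(\zeta_q)$ is totally imaginary; \cref{splitting}, with \cref{olteanu,hermanodd} ($e=2$, $f=1$), turns ``division algebra'' into ``$p\equiv-1\bmod4$ and $e_p(F(\zeta_q)/\Q),f_p(F(\zeta_q)/\Q)$ odd'', while $o_q(p)$ odd is condition (i) of (Z)(c) in \cref{SubgruposAD}; all remaining quotients are cyclic or dihedral, hence harmless. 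In case~\eqref{Zc2}, the degree-$\tfrac nk$ algebra $(K(\zeta_p)/K,\sigma,\zeta_k)$ must be exceptional of type 1: ``division algebra'' gives $m_p=\tfrac nk$, and ``not totally definite'' is automatic when $\tfrac nk>2$ and, when $n=2k$, holds unless $F$ is totally imaginary — in which case the quotient $C_p\rtimes_2C_4$ has totally imaginary centre and so must split, i.e.\ $m_{p,2}=1$. The proper quotients gotten by replacing $k$ with a divisor $h\neq k$ give algebras of degree $\tfrac nk$ and $p$-local index $m_{p,h}$, so ``no exceptional proper quotient'' reads $m_{p,h}<\tfrac nk$ for exactly those $h$ for which $C_p\rtimes_hC_{nh/k}$ still satisfies the Amitsur divisibility $v_q(p-1)\le v_q(h)$, and the trichotomy \eqref{zc1}--\eqref{zc3} merely records the admissible patterns of $v_2(k),v_2(n)$ imposed by conditions (ii)--(iii) of (Z)(c).

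For each converse I would check: the listed $(G,F)$ satisfies (Z)(c)(i)--(iii) (in case~\eqref{Zc1} this pins down $p\equiv3\bmod4$ and $o_q(p)$ odd; in case~\eqref{Zc2} the chosen $v_2$-pattern matches one of (ii)--(iii)); the displayed component is a non-totally-definite division algebra, by the local-index computations above together with \cref{splitting} or \cref{lcm,hermaninfty,hermanodd}; and every proper quotient is cyclic, dihedral, or one of the $h$-quotients, each of whose candidate component is split, a totally definite quaternion algebra, or a proper matrix ring, and hence not exceptional — so $G$ is $F$-critical.
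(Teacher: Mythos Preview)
Your outline is correct and follows essentially the same route as the paper: reduce the group via quotients (the paper opens with a tensor-product argument on direct factors of coprime order, you use quotient heredity, but both reach $G=C_m\times(\langle a\rangle_{p^t}\rtimes_k\langle b\rangle_n)$ with $t=1$ and $m\in\{1,q\}$), then analyse the cyclic cyclotomic component and its quotient analogues via \cref{structure,lcm,olteanu,hermaninfty,hermanodd,splitting,unique}. Two phrasings to tighten when you write it up: when you delete a factor $P_{p_1}$ the quotient's minimal component has degree $\prod_{i\neq 1}(|R_{p_i}|/k_{p_i})$, not the original $\tfrac nk$ (your conclusion that at most one $R_p$ is non-trivial still follows, via coprimality of the $|R_p|$, but the stated reason needs this correction); and in case~\eqref{Zc2} the original component is \emph{never} totally definite, since the $v_q$-constraints together with $n\ge 8$ force $k\ge 3$, so $K\supseteq F(\zeta_k)$ is already totally imaginary --- the clause ``$F$ totally imaginary $\Rightarrow m_{p,2}=1$'' is purely about the quotient $C_p\rtimes_2 C_4$ (centre $F(\zeta_p+\zeta_p^{-1})$), exactly as you say a line later.
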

\begin{proof}
If a direct product $G\times H$ is $F$-critical then the simple division algebra component equals $F(G\times H)e=FGe_1\otimes_{F} FHe_2$ for some idempotents $e,e_1,e_2$ and both $FGe_1$ and $FHe_2$ are division algebras (including fields). Assume that $G$ and $H$ have coprime orders. If both $FGe_1$ and $FHe_2$ are non-commutative division algebras, then at least one of $FGe_1$ and $FHe_2$ is of odd degree and cannot be a totally definite quaternion algebra. This means that $G\times H$ can never be $F$-critical, unless either $G=1$ or $H=1$. If one of both, say $FG e_1$ is a field, then $FGe_1$ is a Wedderburn component of $F (G/G')$ and hence we can assume that $G$ is an abelian group in (Z) and thus cyclic.

Let $G$ be a Z-group of type (Z)(c) which is $F$-critical. By \cref{SubgruposAD} and the above, necessarily, $G= C_{m}\times (\GEN{a}_{p^t} \rtimes_k \GEN{b}_n$) with $\gcd(m,n)=1=\gcd(m,p)=\gcd(n,p)$. By \eqref{Zinequality} of \cref{remarkZ}, $p$ is odd.


Let $A=C_m\times\GEN{ab^{n/k}}$. Then $E_F(G,A)=G$ and $F Ge_C(G,A,1)=F(\zeta_{mp^tk})*G/A$ is a division algebra for any $C\in\mathcal{C}_F(A)$ because of \cref{minimal}. Assume that $t\neq 1$ and take $N=\GEN{a^p}$. We denote by bars the reduction modulo $N$. Then $\overline{G}=C_m\times(\GEN{\overline{a}}_p\rtimes_k \GEN{\overline{b}}_n)$, $E_F(\overline{G},\overline{A})=\overline{G}$, by \cref{reduction}, and, for any $D\in \mathcal{C}_F(\overline{A})$, $F \overline{G} e_{D}(\overline{G},\overline{A},1)=F(\zeta_{mpk})*G/A$ is naturally embedded in $FGe_C(G,A,1)$. Hence also $F\overline{G}$ has a simple component which is a division algebra of degree $\frac{n}{k}$. Therefore it should be a totally definite quaternion algebra and hence $\frac{n}{k}=2$. This means that the action of $\GEN{\overline{b}}$ on $\GEN{\overline{a}}$ is of order 2 and $\overline{b}^{-1}\overline{a}\overline{b}=\overline{a}^{-1}$. Since $F(\zeta_{mk})$ is contained in the center of $F(\zeta_{mpk})*G/A$, which is totally real, $m=1$ and $k\leq 2$. By \cref{remarkZ} $k\neq 1$. So $k=2$ and $n=4$. Furthermore, $F$ is totally real. Now $G$ is as in case (Z)(b), but by \cref{Zb}, $F$ cannot be totally real in order for $G$ to be $F$-critical. This contradiction tells us that $t=1$.

From now on $G=C_m\times (\GEN{a}_{p} \rtimes_k \GEN{b}_n)$ is $F$-critical and we distinguish two cases, $m\neq 1$ and $m=1$.

Assume first that $G=C_m\times (\GEN{a}_{p} \rtimes_k \GEN{b}_n)$ is $F$-critical and $m\neq 1$. Let $N=C_m$ and $A=C_m\times\GEN{ab^{n/k}}$. By bar we denote reduction modulo $N = C_m$. Then $\overline{G}=\GEN{\overline{a}}_p\rtimes_k \GEN{\overline{b}}_n$, $E_F(\overline{G},\overline{A})=\overline{G}$ by \cref{reduction} and, for any $D\in \mathcal{C}_F(\overline{A})$, $F \overline{G} e_{D}(\overline{G},\overline{A},1)=F(\zeta_{pk})*G/A$. It is naturally embedded in $F(\zeta_{mpk})*G/A$ and therefore it is again a division algebra of degree $\frac{n}{k}$. Therefore it should be a totally definite quaternion algebra and hence $\frac{n}{k}=2$. As before, we conclude that $k=2$, $n=4$ and $F$ is totally real. Now for some normal subgroup $M$ of $G$ and some prime divisor $q$ of $m$, ${G/M=C_q \times (\GEN{\overline{a}}_p\rtimes_2 \GEN{\overline{b}}_4)}$. Then $F (G/M) e_{D}(G/M,A/M,1)=F(\zeta_{2qp})*G/A=\quat{-1,(\zeta_{p}-\zeta_{p}^{-1})^2}{F(\zeta_q,\zeta_{p}+\zeta_{p}^{-1})}$ should be a totally definite quaternion algebra, but its center contains $F(\zeta_q)$ which is not totally real. This is a contradiction and hence $m$ is prime. 
So we can assume that $G=C_q \times (\GEN{a}_p\rtimes_2\GEN{b}_4)$ with $q$ and $p$ different odd primes. Using the conditions of (Z)(c), we get $2 = 2o_2(p) \nmid o_q(p)$. Hence $o_q(p)$ is odd. Moreover, if $p\equiv 1 \mod 4$, then $1=v_2(k)\geq v_2(p-1)\geq 2$, a contradiction. Thus $p\equiv -1\mod 4$.
Now \begin{eqnarray*}
 F Ge_C(G,A,1) &=& (F(\zeta_{qp})/F(\zeta_q,\zeta_p+\zeta_p^{-1}),\zeta_p\mapsto \zeta_p^{-1},-1)\\
&=& \quat{-1,(\zeta_{p}-\zeta_{p}^{-1})^2}{F(\zeta_q,\zeta_{p}+\zeta_{p}^{-1})}\\
&=&F(\zeta_q,\zeta_{p}+\zeta_{p}^{-1})\otimes_{\Q(\zeta_{p}+\zeta_{p}^{-1})}\quat{-1,(\zeta_{p}-\zeta_{p}^{-1})^2}{\Q(\zeta_{p}+\zeta_{p}^{-1})}
\end{eqnarray*}
is a division algebra. By \cref{olteanu,splitting}, we have that $m_{p}\quat{-1,(\zeta_{p}-\zeta_{p}^{-1})^2}{\Q(\zeta_{p}+\zeta_{p}^{-1})} \not= 1$ and both $e_{p}(F(\zeta_q,\zeta_{p}+\zeta_{p}^{-1})/\Q(\zeta_{p}+\zeta_{p}^{-1}))$ and $f_{p}(F(\zeta_q,\zeta_{p}+\zeta_{p}^{-1})/\Q(\zeta_{p}+\zeta_{p}^{-1}))$ are odd. 

Since $e=e_p(\Q(\zeta_p)/\Q(\zeta_p+\zeta_p^{-1}))=2$ and $f=f_p(\Q(\zeta_p+\zeta_p^{-1})/\Q)=1$, it follows from \cref{hermanodd} that $m_{p}\quat{-1,(\zeta_{p}-\zeta_{p}^{-1})^2}{\Q(\zeta_{p}+\zeta_{p}^{-1})} \not= 1$ if and only if $p\equiv -1\mod 4$. Since $p\equiv -1\mod 4$, also $e_p(\Q(\zeta_{p}+\zeta_{p}^{-1})/\Q)$, $f_p(\Q(\zeta_{p}+\zeta_{p}^{-1})/\Q)$, $e_{p}(F(\zeta_q,\zeta_{p}+\zeta_{p}^{-1})/F(\zeta_q))$ and $f_{p}(F(\zeta_q,\zeta_{p}+\zeta_{p}^{-1})/F(\zeta_q))$ are odd. Therefore $e_{p}(F(\zeta_q,\zeta_{p}+\zeta_{p}^{-1})/\Q(\zeta_{p}+\zeta_{p}^{-1}))$ and $f_{p}(F(\zeta_q,\zeta_{p}+\zeta_{p}^{-1})/\Q(\zeta_{p}+\zeta_{p}^{-1}))$ are odd if and only if $e_p(F(\zeta_q)/\Q)$ and $f_p(F(\zeta_q)/\Q)$ are odd. This means that $G$ and $F$ are as in \eqref{Zc1}.

Conversely, assume that $G$ and $F$ are as in \eqref{Zc1}, then $G=C_{pq}\rtimes_2 C_4 = C_q \times (\GEN{a}_p \rtimes_2 \GEN{b}_4)$ is a Z-group of type (Z)(c). Let $A=C_q\times\GEN{ab^2}$ and $C\in \mathcal{C}_F(A)$. Since $F$ is totally real, clearly $\Q(\zeta_{p})\cap F \subseteq \Q(\zeta_{p}+\zeta_{p}^{-1})$ and hence $G=E_F(G,A)$. Since $FGe_C(G,A,1)$ has a totally imaginary center $F(\zeta_{q},\zeta_{p}+\zeta_{p}^{-1})$, it is not a totally definite quaternion algebra. Due to $p\equiv -1 \mod 4$, $m_{p}\quat{-1,(\zeta_{p}-\zeta_{p}^{-1})^2}{\Q(\zeta_{p}+\zeta_{p}^{-1})} \not= 1$ and because of the assumptions both $e_{p}(F(\zeta_q,\zeta_{p}+\zeta_{p}^{-1})/\Q(\zeta_{p}+\zeta_{p}^{-1}))$ and $f_{p}(F(\zeta_q,\zeta_{p}+\zeta_{p}^{-1})/\Q(\zeta_{p}+\zeta_{p}^{-1}))$ are odd. Because of \cref{splitting}. $FGe_C(G,A,1)$ is now a division algebra. Therefore $FG$ has an exceptional component of type 1. The proper non-abelian quotients of $G$ are $C_p\rtimes_2 C_4, D_{2p}, D_{2pq}$ and, since $F$ is totally real, those groups give rise to simple components which are either fields, matrix rings or totally definite quaternion algebras. Therefore $G$ is $F$-critical.

Assume now that $m=1$ and $G=\GEN{a}_p \rtimes_k \GEN{b}_n$ is $F$-critical, with $b^{-1}ab=a^r$, $p$ an odd prime and $\gcd(p,n)=1$. By \cref{remarkZ}, if $q$ is a prime divisor of $n$ then $1\le v_q(\frac{n}{k})\le v_q(p-1)\le v_q(k)$. In particular $v_q(n)\ge 2$, so either $n=4$ and $G=C_p\rtimes_2 C_4$ or $n\ge 8$. However $G=C_p\rtimes_2 C_4$ is dealt with in \cref{Zb}, thus we can assume that $n\ge 8$. Let $A=\GEN{ab^{n/k}}$. By assumption, $FG$ has an exceptional component of type 1, so by \cref{minimal}, $FGe_C(G,A,1)$ is a division algebra which has degree $\frac{n}{k}$, and in particular $E_F(G,A)=G$. By \cref{E}, $E_F(G,A)=G$ is equivalent to 
\begin{eqnarray*}
 \Q(\zeta_p)\cap F\ \subseteq\ \Fix(\Q(\zeta_p)\rightarrow \Q(\zeta_p):\zeta_p\mapsto \zeta_p^r)
\ =\ \Q(\zeta_p+\zeta_p^r+...+\zeta_p^{r^{\frac{n}{k}-1}}).
\end{eqnarray*}

From the conditions on (Z)(c), if $n$ is even, we have $2\le v_2(p-1)\le v_2(k)$ when $p\equiv 1 \mod 4$ and either $v_2(k)=1$ or $3\le v_2(p+1)+1\le v_2(k)$ when $p\equiv -1 \mod 4$. Due to $\frac{n}{k}$ divides $p-1$ (see \cref{remarkZ}), if $p\equiv -1 \mod 4$, then $v_2(p-1)=1$ and $v_2(n)=v_2(k)+1$. 
Together with the other conditions form (Z)(c) on the parameters of $G$, this gives rise to the following cases:
\begin{enumerate}[label=(\Roman*),ref=\Roman*]
\item\label{case1} either $p\equiv 1 \mod 4$ or $n$ is odd, $v_q(p-1)\le v_q(k)$ for every prime divisor $q$ of $n$;
\item\label{case2} $p\equiv -1 \mod 4$, $v_2(k)=1$, $v_2(n)=2$ and $v_q(p-1)\le v_q(k)$ for every odd prime divisor $q$ of $n$;
\item\label{case3} $p\equiv -1 \mod 4$, $3\le v_2(p+1)+1\le v_2(k)$, $v_2(n)=v_2(k)+1$ and $v_q(p-1)\le v_q(k)$ for every odd prime divisor $q$ of $n$.
\end{enumerate}

Let $\sigma$ be the automorphism of $F(\zeta_{pk})$ which maps $\zeta_{p}$ to $\zeta_{p}^{r}$ and fixes $F(\zeta_k)$. Let $K=\Fix(\sigma)$. By our assumptions and \cref{structure}, for any $C\in \mathcal{C}_F(A)$ we have $FGe_C(G,A,1)=(K(\zeta_p)/K,\sigma, \zeta_k)$. Since $FGe_C(G,A,1)$ is a division algebra, \[\frac{n}{k}=\ind(FGe_C(G,A,1))=\lcm(m_q(FGe_C(G,A,1)) : q=p,\infty),\] by \cref{lcm,olteanu}. If $m_{\infty}(FGe_C(G,A,1))= 2$, then $K\subseteq \R$ by \cref{hermaninfty}. Note that always $\zeta_k\in K$, so $k=2$. This implies that $n$ is a power of $2$. 
However, when $k=2$, $G$ is in case \eqref{case2} and $v_2(n)=2$, a contradiction because of $n\ge 8$. Thus, ${m_{\infty}(FGe_C(G,A,1))=1}$. By \cref{hermanodd}, $m_p(FGe_C(G,A,1))=m_p$ as in the statement of the theorem. So ${\frac{n}{k}=\ind(FGe_C(G,A,1))=m_p}$ and $FGe_C(G,A,1)$ is an exceptional component of $FG$.

For each of the cases \eqref{case1}-\eqref{case3}, let $h\mid k$, $h\neq k$ be an integer satisfying the conditions as in the statement of the theorem or $h=2$ when $k$ is even, and let $N=\GEN{b^{\frac{hn}{k}}}\subseteq Z(G)$. By $\overline{\phantom{n}}$ we denote reduction modulo $N$. Then $\overline{G}=C_p\rtimes_h C_{\frac{hn}{k}}$ is a non-abelian proper quotient of $G$. Note also that the prime divisors of $n$ and $\frac{hn}{k}$ are the same since the prime divisors of $n$ and $\frac{n}{k}$ are the same. Since $N\subseteq Z(G)$, the images of the actions of $C_n$ on $C_p$ and of $C_{\frac{hn}{k}}$ on $C_p$ are the same. So each Sylow $q$-subgroup of $C_{\frac{hn}{k}}$ acts non-trivial on $C_p$ and $\overline{G}$ is again a Z-group of type (Z)(b) or (Z)(c). By our assumption on $G$, $\overline{G}$ can not have exceptional simple components. Therefore the simple component $F\overline{G}e_D(\overline{G},\overline{A},1)$ of degree $\frac{n}{k}$ of $F\overline{G}$ is not exceptional for any $D\in \mathcal{C}_F(\overline{A})$. Since $E_F(G,A)=G$, by \cref{reduction}, $E_F(\overline{G},\overline{A})=\overline{G}$, and then by \cref{structure}, $F\overline{G}e_D(\overline{G},\overline{A},1)=(K_h(\zeta_p)/K_h,\sigma, \zeta_h)$ with $\zeta_h\in K_h$. We also use $\sigma$ to denote its restriction to $F(\zeta_{ph})$. Hence \[ \ind(F\overline{G}e_D(\overline{G},\overline{A},1))=\lcm(m_q(F\overline{G}e_D(\overline{G},\overline{A},1)) : q=p,\infty)\le \frac{n}{k}\] by \cref{lcm,olteanu} and $F\overline{G}e_D(\overline{G},\overline{A},1)$ is either a matrix ring or a totally definite quaternion algebra. If $F\overline{G}e_D(\overline{G},\overline{A},1)$ is a totally definite quaternion algebra, then the degree $\frac{n}{k}=2$ and $K_h\subseteq \R$, so $h=2$ and $\overline{G}=C_p\rtimes_2 C_4$. In this case $F$ is totally real. If $F\overline{G}e_D(\overline{G},\overline{A},1)$ is a matrix ring, then $\lcm(m_q(F\overline{G}e_D(\overline{G},\overline{A},1)) : q=p,\infty)< \frac{n}{k}$. Furthermore, suppose that  $m_{\infty}(F\overline{G}e_D(\overline{G},\overline{A},1))=2$, then $K_h\subseteq \R$ by \cref{hermaninfty}, thus $h=2$. It follows that $\frac{hn}{k}=4$ and $\frac{n}{k}=2$. So $\lcm(m_q(F\overline{G}e_D(\overline{G},\overline{A},1)) : q=p,\infty)=1$, and hence ${m_{\infty}(F\overline{G}e_D(\overline{G},\overline{A},1))=1}$, a contradiction. Thus $m_{\infty}(F\overline{G}e_D(\overline{G},\overline{A},1))=1$ and hence $m_p(F\overline{G}e_D(\overline{G},\overline{A},1)=m_{p,h}<\frac{n}{k}$, with $m_{p,h}$ the formula as in the statement of the theorem (\cref{hermanodd}). 

Suppose that $h\neq 2$, then $K_h$ is not totally real and hence $F\overline{G}e_D(\overline{G},\overline{A},1)$ is a matrix ring and $m_{p,h}<\frac{n}{k}$. If $h=2$, then $n=2k$ and $\overline{G}=C_p\rtimes_2 C_4$. If $F\overline{G}e_D(\overline{G},\overline{A},1)$ is a matrix ring, then $m_{p,2}<2$, so $m_{p,2}=1$. If $F\overline{G}e_D(\overline{G},\overline{A},1)$ is a totally definite quaternion algebra, then $F$ is totally real. Hence $G$ is as in \eqref{Zc2}.

Assume now that $G$ and $F$ are as in \eqref{Zc2}. Since any prime divisor $q$ of $n$ divides $\frac{n}{k}$, the order of the image of the action of $C_n$ on $C_p$, any Sylow $q$-subgroup of $C_n$ acts non-trivial on $C_p$. Together with the assumptions on $n$ and $k$ in \eqref{zc1}-\eqref{zc3}, this means that $G$ is a Z-group of type (Z)(c). Let $A=\GEN{ab^{n/k}}$. By \cref{E} and the assumptions on $F$, $E_F(G,A)=G$. Then by \cref{structure}, $FGe_C(G,A,1)=(K(\zeta_p)/K,\sigma, \zeta_k)$ for any $C\in \mathcal{C}_F(A)$. Furthermore, $FGe_C(G,A,1)$ has degree $\frac{n}{k}$ and always $\zeta_k\in K$. By \cref{lcm,olteanu}, \[ \ind(FGe_C(G,A,1))=\lcm(m_q(FGe_C(G,A,1)) : q=p,\infty).\] If $m_{\infty}(FGe_C(G,A,1))=2$, then $K\subseteq \R$ by \cref{hermaninfty}, and hence $k=2$ and $n$ is a power of 2. So $G$ is as in \eqref{zc1} or \eqref{zc3}, but from both conditions we can deduce that $k$ can not be $2$. Thus, $m_{\infty}(FGe_C(G,A,1))=1$. Therefore $\ind(FGe_C(G,A,1))=m_p(FGe_C(G,A,1))$ and by \cref{hermanodd} and the assumptions, we have $m_p(FGe_C(G,A,1))=m_p=\frac{n}{k}$ and $FGe_C(G,A,1)$ is an exceptional component of type 1.

In order to prove that $G$ does not have proper quotients with exceptional components of type 1 in their Wedderburn decomposition over $F$, we argue by means of contradiction. Let $N$ be a normal subgroup of $G$ such that $F(G/N)$ has an exceptional component of type 1. We will use bars for reduction modulo $N$. Then $\overline{G}$ is non-abelian and hence $N=\GEN{b^{\frac{hn}{k}}}\subseteq Z(G)$ for some divisor $h$ of $k$, $h\neq k$. Then $\overline{G}=C_p\rtimes_h C_{\frac{hn}{k}}$ and without loss of generality we can assume that this group has a faithful exceptional component of type 1. Then $\overline{G}$ is as in (Z)(b) or (Z)(c), and by \cref{unique}, $F\overline{G}e_D(\overline{G},\overline{A},1)=(K_h(\zeta_p)/K_h,\sigma ,\zeta_h)$ is an exceptional division algebra. Hence ${\frac{n}{k}=\ind(F\overline{G}e_D(\overline{G},\overline{A},1))=\lcm(m_q(F\overline{G}e_D(\overline{G},\overline{A},1)) : q=p,\infty)}$. 

If $\overline{G}$ is as in (Z)(b), then $\overline{G}=C_p\rtimes_2 C_4$, $h=2$ and $n=2k$. So $G$ is as in \eqref{zc1} or \eqref{zc3} and by the assumptions on $G$, $F$ is totally real or $m_{p,2}=1$. By \cref{structure}, \[F\overline{G}e_D(\overline{G},\overline{A},1)=\quat{-1,(\zeta_p-\zeta_p^{-1})^2}{F(\zeta_p+\zeta_p^{-1})}.\] We assume that it is exceptional, so $\lcm(m_q(F\overline{G}e_D(\overline{G},\overline{A},1)) : q=p,\infty)=2$. If $F$ is totally real, then $\quat{-1,(\zeta_p-\zeta_p^{-1})^2}{F(\zeta_p+\zeta_p^{-1})}$ is a totally definite quaternion algebra, a contradiction. Hence $m_p(F\overline{G}e_D(\overline{G},\overline{A},1))=m_{p,2}=1$, but then $m_{\infty}(F\overline{G}e_D(\overline{G},\overline{A},1))=2$ and $F\overline{G}e_D(\overline{G},\overline{A},1)$ is not an exceptional component, again a contradiction.

Hence $\overline{G}$ is as in (Z)(c). If $m_{\infty}(F\overline{G}e_D(\overline{G},\overline{A},1))=2$, then $K_h\subseteq \R$ by \cref{hermaninfty}, and then $h=2$. However, $h=2$ implies that $n$ is a power of $2$, $p\equiv -1 \mod 4$ and $v_2(\frac{hn}{k})=2$. It follows that $\frac{hn}{k}=4$ and $\overline{G}=C_p\rtimes_2 C_4$, a contradiction. So, $m_{\infty}(F\overline{G}e_D(\overline{G},\overline{A},1))=1$. Suppose that $p\equiv 1 \mod 4$ or $\frac{hn}{k}$ is odd (equivalently $n$ is odd; case \eqref{zc1}), then $v_q(p-1)\le v_q(h)$ for all prime divisors $q$ of $\frac{hn}{k}$ (which are exactly the prime divisors of $n$). Assume first that $h\neq 2$, then by the assumptions, $m_p(F\overline{G}e_D(\overline{G},\overline{A},1))=m_{p,h}<\frac{n}{k}$, a contradiction. Now regard the case when $h=2$, then necessarily $n$ is a power of $2$ and $\overline{G}=C_p\rtimes_2 C_4$, a contradiction. Assume that $p\equiv -1 \mod 4$ and $\frac{hn}{k}$ is even (equivalently $n$ is even). Then $v_q(p-1)\le v_q(h)$ for all odd prime divisors $q$ of $n$. Also, either $v_2(k)=1$ or $v_2(p+1)+1\le v_2(k)$. We first deal with $v_2(k)=1$ (case \eqref{zc2}). In this case, $v_2(n)=2$, $v_2(h)=1$ and $h\ne 2$ (as otherwise $n=4$). So $m_p(F\overline{G}e_D(\overline{G},\overline{A},1))<\frac{n}{k}$, a contradiction. Finally, suppose that $v_2(p+1)+1\le v_2(k)$ (case \eqref{zc3}). Then either $v_2(p+1)+1\le v_2(h)$ or $v_2(h)=1$. If $h\neq 2$, then $m_p(F\overline{G}e_D(\overline{G},\overline{A},1))<\frac{n}{k}$, a contradiction. When $h=2$, $\overline{G}=C_p\rtimes_2 C_4$ gives a contradiction. 

We conclude that $G$ does not have proper quotients with exceptional components of type 1, hence $G$ is $F$-critical.
\end{proof}

\begin{theorem}\label{classification_division}
 Let $D$ be a division ring and $F$ an abelian number field, $p$ and $q$ different odd prime numbers. Then $D$ is a Wedderburn component of $FG$ for an $F$-critical group $G$ if and only if one of the following holds:
 \begin{enumerate}[label=(\alph*),ref=\alph*]
  \item $D = \quat{-1,-1}{F}$, $G \in \{ \SL(2,3), Q_8 \}$, $F$ is totally imaginary and both, $e_2(F/\Q)$ and $f_2(F/\Q)$, are odd;
  \item $D = \quat{-1,-1}{F(\zeta_p)}$, $G \in \{\SL(2,3) \times C_p, Q_8 \times C_p\}$, $\gcd(p, |G|/p) = 1$, $o_p(2)$ is odd, $F$ is totally real and both, $e_2(F(\zeta_p)/\Q)$ and $f_2(F(\zeta_p)/\Q)$, are odd;
  \item $D = \quat{-1,(\zeta_p - \zeta_p^{-1})^2}{F(\zeta_p + \zeta_p^{-1})}$, $G = C_p \rtimes_2 C_4$, $p \equiv -1 \mod 4$, $F$ totally imaginary, $\Q(\zeta_p) \cap F \subseteq \Q(\zeta_p + \zeta_p^{-1})$ and both, $e_p(F/\Q)$ and $f_p(F/\Q)$, are odd;
  \item $D = \quat{-1,(\zeta_p - \zeta_p^{-1})^2}{F(\zeta_q, \zeta_p + \zeta_p^{-1})}$, $G = C_q \times (C_p \rtimes_2 C_4)$, $p \equiv -1 \mod 4$, $o_q(p)$ odd, $F$ is totally real and both, $e_p(F(\zeta_q)/\Q)$ and $f_p(F(\zeta_q)/\Q)$, are odd;
  \item $D = (K(\zeta_{p})/K,\sigma,\zeta_{k})$ with Schur index $\frac{n}{k}$, $G=\GEN{a}_p\rtimes_k \GEN{b}_n$ with $n\geq 8$, $\gcd(p, n) = 1$, $b^{-1}ab=a^r$, and both $k$ and $\frac{n}{k}$ are divisible by all the primes dividing $n$. Here $K=F(\zeta_k,\zeta_p+\zeta_p^r+...+\zeta_p^{r^{\frac{n}{k}-1}})$ and $\sigma: F(\zeta_{pk})\rightarrow F(\zeta_{pk}):\zeta_p\mapsto \zeta_p^r;\zeta_k\mapsto \zeta_k$. Moreover $\Q(\zeta_p)\cap F\subseteq \Q(\zeta_p+\zeta_p^r+...+\zeta_p^{r^{\frac{n}{k}-1}})$ and one of the conditions \eqref{zc1} - \eqref{zc3} from \cref{Zc} holds. Furthermore $$\min\left\{l\in\N \left| \frac{p^f-1}{\gcd(p^f-1,e)} \equiv 0 \mod \frac{k}{\gcd(k,l)}\right.\right\} = \frac{n}{k}$$
with $e=e_p(F(\zeta_{pk})/K)$ and $f=f_p(K/\Q)$.
 \end{enumerate}
\end{theorem}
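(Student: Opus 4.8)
The plan is to derive this classification as a bookkeeping corollary of the results already established in this section, organized by Amitsur's classification.

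First I would treat the forward direction. Suppose $G$ is $F$-critical with associated exceptional component $D$ of type 1 in $FG$. As observed immediately after the definition of $F$-criticality, the $F$-representation affording $D$ is faithful, so $G$ embeds in $\U(D)$ with $D$ a division ring, and hence $G$ appears in Amitsur's list (\cref{SubgruposAD}). I would then run through that list: groups of type (Z)(a) are abelian, hence not $F$-critical; $\OO^*$ and $\SL(2,5)$ are excluded by \cref{O}; the groups $\SL(2,3)$ and $\SL(2,3)\times M$ are handled by \cref{SL23}, which forces $M=C_p$ and yields exactly cases (a) and (b), with $D\iso\quat{-1,-1}{F}$ respectively $D\iso\quat{-1,-1}{F(\zeta_p)}$; the groups $Q_{4k}$ (with $k$ even) and $Q_8\times M$ are handled by \cref{quaternion}, forcing $k=2$ (so $G=Q_8$) and $M=C_p$, again landing in (a) and (b); the groups of type (Z)(b) are handled by \cref{Zb}, giving case (c); and the groups of type (Z)(c) are handled by \cref{Zc}, giving cases (d) and (e). In the last family the hypothesis $m_p=\frac{n}{k}$ of \cref{Zc} is exactly the displayed equality in (e), and the Schur index equality $\ind D=\frac{n}{k}$ comes from \cref{lcm,olteanu,hermanodd}, exactly as inside the proof of \cref{Zc}.

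For the converse I would simply invoke the same five results in the opposite direction: under the stated hypotheses, \cref{SL23} and \cref{quaternion} make $\SL(2,3)$, $Q_8$, $\SL(2,3)\times C_p$ and $Q_8\times C_p$ $F$-critical with the indicated component; \cref{Zb} makes $C_p\rtimes_2 C_4$ $F$-critical with component $\quat{-1,(\zeta_p-\zeta_p^{-1})^2}{F(\zeta_p+\zeta_p^{-1})}$; and \cref{Zc} makes $C_q\times(C_p\rtimes_2 C_4)$ and $\GEN{a}_p\rtimes_k\GEN{b}_n$ (with $n\geq 8$, together with $\Q(\zeta_p)\cap F\subseteq\Q(\zeta_p+\zeta_p^r+\cdots+\zeta_p^{r^{n/k-1}})$ and one of \eqref{zc1}--\eqref{zc3}) $F$-critical with component $(K(\zeta_p)/K,\sigma,\zeta_k)$. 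Along the way I would note that $\SL(2,3)\times C_p$ and $Q_8\times C_p$ produce the same division ring $\quat{-1,-1}{F(\zeta_p)}$ (and $\SL(2,3)$, $Q_8$ produce $\quat{-1,-1}{F}$), which is why each of (a) and (b) lists two groups, and that $C_p\rtimes_2 C_4$ of type (Z)(b) is the degenerate $n=4$ instance of the (Z)(c) family — which is precisely why it is recorded separately as case (c) and why $n\geq 8$ is imposed in (e), so the list has no redundancy.

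I do not expect a genuine obstacle: there is no new mathematics here beyond assembling \cref{O,SL23,quaternion,Zb,Zc}. The only points needing care are the exhaustiveness of the case split against \cref{SubgruposAD}, checking that the reduction $M=C_p$ is forced in the $\SL(2,3)\times M$ and $Q_8\times M$ cases and that both groups yield the same algebra, and faithfully transcribing the arithmetic side conditions (totally real versus totally imaginary centers; the parities of the relevant ramification indices $e_p$ and residue degrees $f_p$; and the local-index formulas defining $m_p$ and $m_{p,h}$) so that the hypotheses recorded here match those in \cref{SL23,quaternion,Zb,Zc} verbatim.
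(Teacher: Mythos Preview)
Your proposal is correct and matches the paper's approach exactly: the paper's proof is the single sentence ``This follows immediately by combining \cref{SubgruposAD,SL23,O,quaternion,Zb,Zc}.'' Your write-up simply unpacks that bookkeeping, running through Amitsur's list and matching each case to the corresponding earlier result.
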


\begin{proof} This follows immediately by combining \cref{SubgruposAD,SL23,O,quaternion,Zb,Zc}. \end{proof}

\section{Examples}

We apply our results of \cref{classification_division} to the prominent case of cyclotomic field $\Q(\zeta_m)$. Because of knowledge on the ramification index and residue degree of these extensions the conditions simplify.

\begin{corollary}
 Let $D$ be a division ring, $F=\Q(\zeta_m)$ a cyclotomic field, $m = 2^sm' \ge 3$ with $\gcd(m',2) = 1$ and $p$ an odd prime number. Then $D$ is a Wedderburn component of $FG$ for an $F$-critical group $G$ if and only if one of the following holds:
 \begin{enumerate}[label=(\alph*),ref=\alph*]
  \item $D = \quat{-1,-1}{F}$, $G \in \{ \SL(2,3), Q_8 \}$, $s\in\{0,1\}$ and $o_{m'}(2)$ is odd;
  \item $D = \quat{-1,(\zeta_p - \zeta_p^{-1})^2}{F(\zeta_p + \zeta_p^{-1})}$, $G = C_p \rtimes_2 C_4$, $p \equiv -1 \mod 4$, $\gcd(p,m) = 1$ and $o_m(p)$ is odd;
  \item $D = (K(\zeta_{p})/K,\sigma,\zeta_{k})$ with Schur index $\frac{n}{k}$, $G=\GEN{a}_p\rtimes_k \GEN{b}_n$ with $n\geq 8$, $\gcd(p, n) = \gcd(p ,m) =  1$, $b^{-1}ab=a^r$, and both $k$ and $\frac{n}{k}$ are divisible by all the primes dividing $n$. Here $K=F(\zeta_k,\zeta_p+\zeta_p^r+...+\zeta_p^{r^{\frac{n}{k}-1}})$ and $\sigma: F(\zeta_{pk})\rightarrow F(\zeta_{pk}):\zeta_p\mapsto \zeta_p^r;\zeta_k\mapsto \zeta_k$. Moreover one of the conditions \eqref{zc1} - \eqref{zc3} from \cref{Zc} holds. Furthermore $$\min\left\{l\in\N \left| \frac{p^f-1}{\gcd(p^f-1,e)} \equiv 0 \mod \frac{k}{\gcd(k,l)}\right.\right\} = \frac{n}{k}$$
with $e=\frac{n}{k}$ and $f=o_{mk}(p)$.
 \end{enumerate}
\end{corollary}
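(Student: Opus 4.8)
The plan is to read the corollary off \cref{classification_division} by specialising $F$ to the cyclotomic field $\Q(\zeta_m)$ and translating the arithmetic conditions of that theorem into conditions on $m=2^sm'$ and on $p$, using the standard description of the ramification index and residue degree of a rational prime in a cyclotomic field. The first step is the observation that $\Q(\zeta_m)$ is totally imaginary whenever $m\ge 3$; consequently the two families in parts (b) and (d) of \cref{classification_division}, which both require $F$ to be totally real, cannot occur, and only the families of parts (a), (c) and (e) survive. These become items (a), (b) and (c) of the corollary, respectively.

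Next I would treat each surviving family in turn. For part (a) of \cref{classification_division} (where $D=\quat{-1,-1}{F}$ and $G\in\{\SL(2,3),Q_8\}$) the only conditions still to be translated are that $e_2(F/\Q)$ and $f_2(F/\Q)$ be odd; since $e_2(\Q(\zeta_m)/\Q)$ equals $1$ when $s\le 1$ and $2^{s-1}$ when $s\ge 2$, it is odd precisely when $s\in\{0,1\}$, while $f_2(\Q(\zeta_m)/\Q)=o_{m'}(2)$, giving exactly the condition in item (a). For part (c) (where $D=\quat{-1,(\zeta_p-\zeta_p^{-1})^2}{F(\zeta_p+\zeta_p^{-1})}$ and $G=C_p\rtimes_2 C_4$) the key point is the condition $\Q(\zeta_p)\cap F\subseteq\Q(\zeta_p+\zeta_p^{-1})$: since $\Q(\zeta_p)\cap\Q(\zeta_m)=\Q(\zeta_{\gcd(p,m)})$ and, $p$ being odd, $\Q(\zeta_p)\not\subseteq\Q(\zeta_p+\zeta_p^{-1})$, this is equivalent to $\gcd(p,m)=1$; granting this, $p$ is unramified in $F$, so $e_p(F/\Q)=1$ is automatically odd and $f_p(F/\Q)=o_m(p)$, leaving only ``$o_m(p)$ odd''. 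Together with the unchanged hypothesis $p\equiv-1\bmod 4$ this is item (b).

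The family of part (e) of \cref{classification_division} requires a little more care. The hypothesis $\Q(\zeta_p)\cap F\subseteq\Fix(\zeta_p\mapsto\zeta_p^r)$ again forces $\gcd(p,m)=1$, because $\Fix(\zeta_p\mapsto\zeta_p^r)$ has index $n/k\ge 2$ in $\Q(\zeta_p)$ and so cannot contain $\Q(\zeta_p)$. It then remains to identify the quantities $e=e_p(F(\zeta_{pk})/K)$ and $f=f_p(K/\Q)$ in concrete terms. Writing $L=\lcm(m,k)$, which is coprime to $p$, one has $F(\zeta_{pk})=\Q(\zeta_{Lp})$; the prime $p$ is unramified in $\Q(\zeta_L)$ and totally ramified in $\Q(\zeta_p)/\Q$, and the center $K=\Q(\zeta_L,\,\zeta_p+\zeta_p^r+\cdots+\zeta_p^{r^{n/k-1}})$ lies between $\Q(\zeta_L)$ and $\Q(\zeta_{Lp})$, with $K\cap\Q(\zeta_p)$ of degree $(p-1)/(n/k)$ over $\Q$ and $[\Q(\zeta_{Lp}):K]=n/k$. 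Multiplicativity of ramification indices and of residue degrees in towers (\cref{reciprocity}), together with total ramification of $p$ in $\Q(\zeta_p)/\Q$, then yields $e_p(K/\Q)=(p-1)/(n/k)$, hence $e=n/k$, and $f=f_p(K/\Q)=o_{mk}(p)$, exactly as in item (c). The remaining hypotheses of \cref{Zc}, conditions \eqref{zc1}--\eqref{zc3}, are phrased intrinsically in terms of the invariants $m_{p,h}$ and carry over verbatim. Finally, all the equivalences used above are reversible, so the ``if'' directions of the three items follow by running the argument backwards through \cref{classification_division}.

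I expect the one genuinely delicate point to be the ramification bookkeeping in part (e): one has to pin down exactly how the center $K$, as the fixed field of the automorphism $\sigma$, sits inside $\Q(\zeta_{\lcm(m,k)p})$ and exploit that the only source of $p$-ramification there is the cyclotomic factor $\Q(\zeta_p)$. Everything else is a direct substitution into \cref{classification_division}.
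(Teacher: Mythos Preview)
Your proposal is correct and follows exactly the approach the paper indicates: the paper offers no proof beyond the remark that one applies \cref{classification_division} to $F=\Q(\zeta_m)$ and uses the known ramification indices and residue degrees in cyclotomic extensions to simplify the conditions. Your argument carries this out in detail, correctly observing that $\Q(\zeta_m)$ is totally imaginary for $m\ge 3$ (eliminating cases (b) and (d) of \cref{classification_division}) and translating the remaining arithmetic conditions via the standard formulas $e_q(\Q(\zeta_N)/\Q)=\varphi(q^{v_q(N)})$ and $f_q(\Q(\zeta_N)/\Q)=o_{N/q^{v_q(N)}}(q)$; note only that your computation in part (e) actually produces $f=o_{\lcm(m,k)}(p)$, which you should identify with the paper's $o_{mk}(p)$.
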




Note that from \cref{exceptional_table} one can easily derive all finite groups $G$ and simple exceptional algebras $B$ of type 2, such that $\Q(\zeta_m) G$ ($m \geq 3$) has $B$ in its Wedderburn decomposition. In such a case $m$ is restricted to $3$ and $4$.

The conditions of \cref{classification_division} are easy to check algorithmically and we did implement it in \texttt{GAP}. With this programm we can compute the $F$-critical groups for any abelian number field $F$ up to a fixed order. As an illustration we include the $F$-critical groups up to order 200 for all subfields of $\Q(\zeta_7)$. We compute the Schur index of the corresponding exceptional component $A$ and we denote the center of $A$ in the standard \texttt{GAP} notation. A local Schur index $[p,s]$ means that $m_p(A)=s$ and $m_q(A)=1$ for all other primes $q$. When for a fixed group, there are multiple lines in the table, this means that the exceptional component $A$ appears as several isomorphic copies in the Wedderburn decomposition.

{\small 
\begin{longtable}{@{}lllcl@{}} \caption{List of $\Q(\zeta_7)$-critical groups of type $1$ up to order 200} \label{exceptional_table_over_CF(7)} \\ \toprule[1.5pt]
\textsc{SmallGroup} ID & Structure & Center & Schur index & Local index \\ \midrule 
\endfirsthead \toprule[1.5pt] \textsc{SmallGroup} ID & Structure & Center & Schur index & Local index \\ \midrule 
\endhead \hline \multicolumn{5}{c}{continued}\\ \midrule[1.5pt]\endfoot\bottomrule[1.5pt]\endlastfoot
{[}8, 4{]} & $ Q_8 $ & CF(7) & 2 & {[}2, 2{]} \\
{[}24, 3{]} & $ {\rm SL} (2,3)$ & CF(7) & 2 & {[}2, 2{]} \\
{[}44, 1{]} & $C_{11}  \rtimes_{2} C_{4}$ & NF(77,[ 1, 43 ]) & 2 & {[}11, 2{]} \\
{[}48, 1{]} & $C_{3}  \rtimes_{8} C_{16}$ & CF(56) & 2 & {[}3, 2{]} \\
{[}80, 1{]} & $C_{5}  \rtimes_{8} C_{16}$ & NF(280,[ 1, 169 ]) & 2 & {[}5, 2{]} \\
{[}92, 1{]} & $C_{23}  \rtimes_{2} C_{4}$ & NF(161,[ 1, 22 ]) & 2 & {[}23, 2{]} \\
{[}117, 1{]} & $C_{13}  \rtimes_{3} C_{9}$ & NF(273,[ 1, 22, 211 ]) & 3 & {[}13, 3{]} \\
{[}160, 3{]} & $C_{5}  \rtimes_{8} C_{32}$ & CF(56) & 4 & {[}5, 4{]} \\
{[}172, 1{]} & $C_{43}  \rtimes_{2} C_{4}$ & NF(301,[ 1, 85 ]) & 2 & {[}43, 2{]} \\
\end{longtable}}
{\small 
\begin{longtable}{@{}lllcl@{}} \caption{List of $\Q(\zeta_7 + \zeta_7^{-1})$-critical groups of type $1$ up to order 200} \label{exceptional_table_over_NF(7,[ 1, 6 ])} \\ \toprule[1.5pt]
\textsc{SmallGroup} ID & Structure & Center & Schur index & Local index \\ \midrule 
\endfirsthead \toprule[1.5pt] \textsc{SmallGroup} ID & Structure & Center & Schur index & Local index \\ \midrule 
\endhead \hline \multicolumn{5}{c}{continued}\\ \midrule[1.5pt]\endfoot\bottomrule[1.5pt]\endlastfoot
{[}40, 1{]} & $C_{5}  \rtimes_{4} C_{8}$ & NF(140,[ 1, 29, 41, 69 ]) & 2 & {[}5, 2{]} \\
{[}48, 1{]} & $C_{3}  \rtimes_{8} C_{16}$ & NF(56,[ 1, 41 ]) & 2 & {[}3, 2{]} \\
{[}56, 10{]} & $C_{7}  \times   Q_8 $ & CF(7) & 2 & {[}2, 2{]}\\
 & & CF(7) & 2 & {[}2, 2{]}\\
 & & CF(7) & 2 & {[}2, 2{]} \\
{[}80, 3{]} & $C_{5}  \rtimes_{4} C_{16}$ & NF(28,[ 1, 13 ]) & 4 & {[}5, 4{]} \\
{[}84, 4{]} & $C_{3}  \times  (C_{7}  \rtimes_{2} C_{4})$ & NF(21,[ 1, 13 ]) & 2 & {[}7, 2{]}\\
 & & NF(21,[ 1, 13 ]) & 2 & {[}7, 2{]}\\
 & & NF(21,[ 1, 13 ]) & 2 & {[}7, 2{]} \\
{[}104, 1{]} & $C_{13}  \rtimes_{4} C_{8}$ & NF(364,[ 1, 181, 209, 337 ]) & 2 & {[}13, 2{]} \\
{[}117, 1{]} & $C_{13}  \rtimes_{3} C_{9}$ & NF(273,[ 1, 22, 55, 118, 139, 211 ]) & 3 & {[}13, 3{]} \\
{[}132, 1{]} & $C_{11}  \times  (C_{3}  \rtimes_{2} C_{4})$ & NF(77,[ 1, 34 ]) & 2 & {[}3, 2{]} \\
{[}156, 3{]} & $C_{13}  \times  (C_{3}  \rtimes_{2} C_{4})$ & NF(91,[ 1, 27 ]) & 2 & {[}3, 2{]} \\
{[}168, 22{]} & $C_{7}  \times   {\rm SL} (2,3)$ & CF(7) & 2 & {[}2, 2{]}\\
 & & CF(7) & 2 & {[}2, 2{]}\\
 & & CF(7) & 2 & {[}2, 2{]} \\
{[}176, 1{]} & $C_{11}  \rtimes_{8} C_{16}$ & NF(616,[ 1, 153, 265, 505 ]) & 2 & {[}11, 2{]} \\
{[}184, 10{]} & $C_{23}  \times   Q_8 $ & NF(161,[ 1, 139 ]) & 2 & {[}2, 2{]} \\
\end{longtable}}
{\footnotesize 
\begin{longtable}{@{}lllcl@{}} \caption{List of $\Q(\sqrt{-7})$-critical groups of type $1$ up to order 200} \label{exceptional_table_over_NF(7,[ 1, 2, 4 ])} \\ \toprule[1.5pt]
\textsc{SmallGroup} ID & Structure & Center & Schur index & Local index \\ \midrule 
\endfirsthead \toprule[1.5pt] \textsc{SmallGroup} ID & Structure & Center & Schur index & Local index \\ \midrule 
\endhead \hline \multicolumn{5}{c}{continued}\\ \midrule[1.5pt]\endfoot\bottomrule[1.5pt]\endlastfoot
{[}8, 4{]} & $ Q_8 $ & NF(7,[ 1, 2, 4 ]) & 2 & {[}2, 2{]} \\
{[}24, 3{]} & $ {\rm SL} (2,3)$ & NF(7,[ 1, 2, 4 ]) & 2 & {[}2, 2{]} \\
{[}44, 1{]} & $C_{11}  \rtimes_{2} C_{4}$ & NF(77,[ 1, 23, 32, 43, 65, 67 ]) & 2 & {[}11, 2{]} \\
{[}48, 1{]} & $C_{3}  \rtimes_{8} C_{16}$ & NF(56,[ 1, 9, 25 ]) & 2 & {[}3, 2{]} \\
{[}63, 1{]} & $C_{7}  \rtimes_{3} C_{9}$ & NF(21,[ 1, 4, 16 ]) & 3 & {[}7, 3{]}\\
 & & NF(21,[ 1, 4, 16 ]) & 3 & {[}7, 3{]} \\
{[}80, 1{]} & $C_{5}  \rtimes_{8} C_{16}$ & NF(280,[ 1, 9, 81, 121, 169, 249 ]) & 2 & {[}5, 2{]} \\
{[}92, 1{]} & $C_{23}  \rtimes_{2} C_{4}$ & NF(161,[ 1, 22, 93, 114, 116, 137 ]) & 2 & {[}23, 2{]} \\
{[}117, 1{]} & $C_{13}  \rtimes_{3} C_{9}$ & NF(273,[ 1, 16, 22, 79, 100, 172, 211, 235, 256 ]) & 3 & {[}13, 3{]} \\
{[}160, 3{]} & $C_{5}  \rtimes_{8} C_{32}$ & NF(56,[ 1, 9, 25 ]) & 4 & {[}5, 4{]} \\
{[}172, 1{]} & $C_{43}  \rtimes_{2} C_{4}$ & NF(301,[ 1, 44, 85, 128, 130, 214 ]) & 2 & {[}43, 2{]} \\
\end{longtable}}
{\small 
\begin{longtable}{@{}lllcl@{}} \caption{List of $\Q$-critical groups of type $1$ up to order 200} \label{exceptional_table_over_Rationals} \\ \toprule[1.5pt]
\textsc{SmallGroup} ID & Structure & Center & Schur index & Local index \\ \midrule 
\endfirsthead \toprule[1.5pt] \textsc{SmallGroup} ID & Structure & Center & Schur index & Local index \\ \midrule 
\endhead \hline \multicolumn{5}{c}{continued}\\ \midrule[1.5pt]\endfoot\bottomrule[1.5pt]\endlastfoot
{[}40, 1{]} & $C_{5}  \rtimes_{4} C_{8}$ & NF(20,[ 1, 9 ]) & 2 & {[}5, 2{]} \\
{[}48, 1{]} & $C_{3}  \rtimes_{8} C_{16}$ & CF(8) & 2 & {[}3, 2{]} \\
{[}56, 10{]} & $C_{7}  \times   Q_8 $ & CF(7) & 2 & {[}2, 2{]} \\
{[}63, 1{]} & $C_{7}  \rtimes_{3} C_{9}$ & NF(21,[ 1, 4, 16 ]) & 3 & {[}7, 3{]} \\
{[}80, 3{]} & $C_{5}  \rtimes_{4} C_{16}$ & GaussianRationals & 4 & {[}5, 4{]} \\
{[}84, 4{]} & $C_{3}  \times  (C_{7}  \rtimes_{2} C_{4})$ & NF(21,[ 1, 13 ]) & 2 & {[}7, 2{]} \\
{[}104, 1{]} & $C_{13}  \rtimes_{4} C_{8}$ & NF(52,[ 1, 25 ]) & 2 & {[}13, 2{]} \\
{[}117, 1{]} & $C_{13}  \rtimes_{3} C_{9}$ & NF(39,[ 1, 16, 22 ]) & 3 & {[}13, 3{]} \\
{[}132, 1{]} & $C_{11}  \times  (C_{3}  \rtimes_{2} C_{4})$ & CF(11) & 2 & {[}3, 2{]} \\
{[}156, 3{]} & $C_{13}  \times  (C_{3}  \rtimes_{2} C_{4})$ & CF(13) & 2 & {[}3, 2{]} \\
{[}168, 22{]} & $C_{7}  \times   {\rm SL} (2,3)$ & CF(7) & 2 & {[}2, 2{]} \\
{[}176, 1{]} & $C_{11}  \rtimes_{8} C_{16}$ & NF(88,[ 1, 65 ]) & 2 & {[}11, 2{]} \\
{[}184, 10{]} & $C_{23}  \times   Q_8 $ & CF(23) & 2 & {[}2, 2{]} \\
\end{longtable}}

\section*{Acknowledgements}
The authors would like to thank Allen Herman and \'Angel del R\'io for the helpful discussions.

\renewcommand{\bibname}{References}
\bibliographystyle{amsalpha}
\bibliography{referencesFebruary2015}

\end{document}